\titleformat{\section}[hang]{\bf\Large}{\thesection.}{1ex}{}
\titleformat{\subsection}[hang]{\bfseries\normalsize}{\thesubsection}{1ex}{}
\def\distsign{\begin{picture}(0,0)\put(0,0){\circle{4}}\end{picture}}
\def\dist{\mbox{$\xymatrix@1@C=5mm{\ar@{->}[r]|{\distsign}&}$}}
\newcommand\adj[2]{\xymatrix@C=8ex{\ar@{}[r]|{\perp}\ar@/^2ex/[r]^{#1} & \ar@/^2ex/[l]^{#2}}}
\newcommand\jda[2]{\xymatrix@C=8ex{\ar@{}[r]|{\top}\ar@/^2ex/[r]^{#1} & \ar@/^2ex/[l]^{#2}}}
\newcommand\arr[1]{\xymatrix{\ar@{->}[r]^{#1}&}}
\newtheorem{theorem}{Theorem}[subsection]
\newtheorem{definition}[theorem]{Definition} 
\newtheorem{proposition}[theorem]{Proposition}
\newtheorem{corollary}[theorem]{Corollary}
\newtheorem{lemma}[theorem]{Lemma}
{\theorembodyfont{\upshape}\newtheorem{example}[theorem]{Example}}
{\theorembodyfont{\upshape}\newtheorem{remark}[theorem]{Remark}}
\newenvironment{proof}[1][Proof.]{\begin{trivlist}\item[\hskip\labelsep{\it #1}]}{\hfill$\Box$\end{trivlist}}
\newcommand\Sup{\mathsf{Sup}}
\newcommand\Quant{\mathsf{Quant}}
\renewcommand\P{\mathcal{P}}
\newcommand\R{\mathcal{R}}
\renewcommand\S{\mathcal{S}}
\newcommand\T{\mathcal{T}}
\renewcommand\:{\colon}
\newcommand\sw{\!\swarrow\!}
\newcommand\se{\!\searrow\!}
\newcommand\lax{_{\mathsf{lax}}}
\newcommand\sq{\!\rightsquigarrow\!}
\renewcommand\S{\mathcal{S}}
\renewcommand\o{^{\sf o}}
\newcommand\Cat{{\sf Cat}}
\newcommand\Ord{{\sf Ord}}
\newcommand\impl{\Rightarrow}
\renewcommand\2{{\bf 2}}
\newcommand\op{^{\sf op}}
\newcommand\dom{{\sf dom}}
\newcommand\cod{{\sf cod}}
\newcommand\Dist{{\sf Dist}}
\newcommand\id{{\sf id}}
\newcommand\cc{_{\sf cc}}
\newcommand\s{_{\sf s}}
\newcommand\Q{{\cal Q}}
\newcommand\V{{\cal V}}
\renewcommand\H{{\cal H}}
\newcommand\bbA{\mathbb{A}}
\newcommand\bbB{\mathbb{B}}
\newcommand\bbC{\mathbb{C}}
\newcommand\bbD{\mathbb{D}}
\newcommand\bbE{\mathbb{E}}
\newcommand\bbT{\mathbb{T}}
\newcommand\tensor{\otimes}
\def\eqref#1{(\ref{#1})}
\newcommand\D{\mathcal{D}}
\renewcommand\1{\mathds{1}}
\newcommand\+{^{\dagger}}
\newcommand\bbS{\mathbb{S}}
\newcommand\Cocont{\mathsf{Cocont}}
\newcommand\Sym{{\sf Sym}}
\newcommand\Cont{{\sf Cont}}
\newcommand\bbX{\mathbb{X}}
\newcommand\bbY{\mathbb{Y}}
\newcommand\Met{\mathsf{Met}}
\newcommand\Gen{\mathsf{G}}
\newcommand\Par{\mathsf{P}}
\newcommand\Ult{\mathsf{U}}
\def\cl#1{\overline{#1}} % closure
\def\scl#1{\widehat{#1}} % symmetric closure
\newcommand\hookto{\hookrightarrow} % full embedding
\newcommand\Clos{\mathsf{Clos}}
\newcommand\gd{_{\mathsf{gd}}}
\newcommand\nz{_{\mathsf{nz}}}
\newcommand\z{_{\mathsf{z}}}
\begin{document}

\title{Topology from enrichment: the curious case of partial metrics} 
\author{Dirk Hofmann\thanks{Departamento de Matemática, Universidade de Aveiro, Portugal, {\tt dirk@ua.pt}; Research partially supported by Centro de Investiga\c{c}\~ao e Desenvolvimento em Matem\'atica e Aplica\c{c}\~oes (CIDMA) da Universidade de Aveiro/FCT -- UID/MAT/04106/2013.} \ and Isar Stubbe\footnote{Lab.\ Math.\ Pures Appl., Université du Littoral, France, {\tt isar.stubbe@univ-littoral.fr}}}

\date{}

\maketitle 

%% for easy of reference (to be removed in final version):
%\vspace{-7cm}\begin{flushright}
%{\footnotesize\sl File: \jobname.tex}
%\end{flushright}
%\vspace{6.5cm}
%%

\begin{quote}{\small
{\bf Abstract.} For any small quantaloid $\Q$, there is a new quantaloid $\D(\Q)$ of diagonals in $\Q$. If $\Q$ is divisible then so is $\D(\Q)$ (and vice versa), and then it is particularly interesting to compare categories enriched in $\Q$ with categories enriched in $\D(\Q)$. Taking Lawvere's quantale of extended positive real numbers as base quantale, $\Q$-categories are generalised metric spaces, and $\D(\Q)$-categories are generalised partial metric spaces, i.e.\ metric spaces in which self-distance need not be zero and with a suitably modified triangular inequality. We show how every small quantaloid-enriched category has a canonical closure operator on its set of objects: this makes for a functor from quantaloid-enriched categories to closure spaces. Under mild necessary-and-sufficient conditions on the base quantaloid, this functor lands in the category of topological spaces; and an involutive quantaloid is Cauchy-bilateral (a property discovered earlier in the context of distributive laws) if and only if the closure on any enriched category is identical to the closure on its symmetrisation. As this now applies to metric spaces and partial metric spaces alike, we demonstrate how these general categorical constructions produce the ``correct'' definitions of convergence and Cauchyness of sequences in generalised partial metric spaces. Finally we describe the Cauchy-completion, the Hausdorff contruction and exponentiability of a partial metric space, again by application of general quantaloid-enriched category theory.
\\
{\bf Keywords.} Quantaloid, divisibility, enriched category, topology, partial metric space.\\
{\bf 2010 Mathematics Subject Classification:} 06A15, 06F07, 18D20, 54E35
}\end{quote}
%
% 06A15  	Galois correspondences, closure operators
% 06F07  	Quantales
% 18D20  	Enriched categories (over closed or monoidal categories)
% 54E35  	Metric spaces, metrizability

\section{Introduction}\label{intro}

Following Fr\'echet \cite{fre06}, a metric space $(X,d)$ is a set $X$ together with a real-valued function $d$ on $X\times X$ such that the following axioms hold:

[M0] $d(x,y)\geq 0$,

[M1] $d(x,y)+d(y,z)\geq d(x,z)$,

[M2] $d(x,x)=0$,

[M3] if $d(x,y)=0=d(y,x)$ then $x=y$,

[M4] $d(x,y)=d(y,x)$,

[M5] $d(x,y)\neq+\infty$.\\
The categorical content of this definition, as first observed by Lawvere \cite{law72}, is that the extended real interval $[0,\infty]$ underlies a quantale $([0,\infty],\bigwedge,+,0)$, so that a ``generalised metric space'' (i.e.\ a structure as above, minus the axioms M3-M4-M5) is exactly a category enriched in that quantale.

More recently, see e.g.\  \cite{mat94}, the notion of a partial metric space $(X,p)$ has been proposed to mean a set $X$ together with a real-valued function $p$ on $X\times X$ satisfying the following axioms:

[P0] $p(x,y)\geq 0$,

[P1] $p(x,y)+p(y,z)-p(y,y)\geq p(x,z)$,

[P2] $p(x,y)\geq p(x,x)$,

[P3] if $p(x,y)=p(x,x)=p(y,y)=p(y,x)$ then $x=y$,

[P4] $p(x,y)=p(y,x)$,

[P5] $p(x,y)\neq+\infty$.\\
The categorical content of {\em this} definition was discovered in two steps: first, H\"ohle and Kubiak \cite{hohkub11} showed that there is a particular quantaloid of positive real numbers, such that categories enriched in that quantaloid correspond to (``generalised'') partial metric spaces; and second, we realised in \cite{stu13} that H\"ohle and Kubiak's quantaloid of real numbers is actually a universal construction on Lawvere's quantale of real numbers: namely, the quantaloid $\mathcal{D}[0,\infty]$ of diagonals in $[0,\infty]$.

It was shown in \cite{hoftho10} that to any category enriched in a symmetric quantale one can associate a closure operator on its collection of objects. For a metric space $(X,d)$, viewed as an $[0,\infty]$-enriched category, that ``categorical closure'' on $X$ coincides precisely with the metric (topological) closure defined by $d$. And Lawvere \cite{law72} famously reformulated the Cauchy completeness of a metric space in terms of adjoint distributors. It is however not that complicated to extend the construction of the ``categorical closure'' to general quantaloid-enriched categories, thus making it applicable to partial metric spaces viewed as $\D[0,\infty]$-enriched categories. And then it is only natural to see if and how Lawvere's arguments for metric spaces go through in the case of partical metrics. This is what we set out to do in this paper---whence its title.

Here is a brief overview of the contents of this paper. Section \ref{A} contains a compact presentation of well-known quantaloid-enriched category theory \cite{stu05} that we shall need further on. In Section \ref{B} we first explain the construction of the quantaloid $\D(\Q)$ of {\em diagonals} in a given quantaloid, to then recall (and somewhat improve) the closely related notion of {\em divisible quantaloid} as it first appeared in \cite{stu13}. For the sake of exposition, we shall say that a $\D(\Q)$-enriched category is a {\em partial $\Q$-enriched category}, and at the end of Section \ref{B} we explain how (generalised) {\em partial} metric spaces are, indeed, precisely the {\em partial} $[0,\infty]$-enriched categories. We start Section \ref{C} by explaining how every quantaloid-enriched category determines a {\em categorical closure} on its set of objects (generalising results from \cite{hoftho10}); we furthermore characterise those quantaloids for which the closure on any enriched category $\bbC$ is topological, and those involutive quantaloids for which the closure on any enriched category $\bbC$ is always identical to the closure on the symmetrised enriched category $\bbC\s$. Viewing partial metric spaces as enriched categories, we identify in Section \ref{topparmet} the categorical topology induced by a (finitely typed) partial metric---and prove that it is always metrisable by means of a symmetric metric. We spell out what it means for a sequence to converge, resp.\ to be Cauchy, in such a partial metric space $(X,p)$, and then show that all such Cauchy sequences converge in $(X,p)$ if and only if all Cauchy distributors on $(X,p)$ qua enriched category are representable. We end with some examples concerning Hausdorff distance in, and exponentiability of, partial metric spaces.

Of course, the study of partial metrics is not new. For example, in the survey paper \cite{bukatinetal09} partial metrics are studied by analogy with metrics, and the reader will find there e.g.\ the definition of Cauchy sequence in a (symmetric) partial metric space (where $p(x,y)\neq\infty$). Let us also mention that \cite{puzha12} already adopts an enriched category point of view, and shows how those Cauchy sequences correspond with Cauchy distributors. However, none of the previously published papers have our purely categorical setup: we contruct a {\it topology} for any quantaloid-enrichment, so that -- when applied to the quantaloid of diagonals in $[0,\infty]$ -- the generic topological notions of convergence and Cauchyness of sequences reproduce those that were considered in a rather {\it ad hoc} manner before. So whereas our paper does not present many new results in partial metric spaces {\it per se}, it does propose a whole new categorical method to study partial metrics. That this method is benificial, can be seen in our treatment of a hitherto undiscovered subtlety involving the points with self-distance $\infty$, and in our results on Hausdorff distance and exponentiability!

\section{Preliminaries, exemplified by metric spaces and ordered sets}\label{A}

A large part of the general theory of quantales and quantaloids is an instance of {\em $\V$-enriched category theory} \cite{kel82}, taking the base category $\V$ to be the category $\Sup$ of complete lattices and supremum-preserving functions. Indeed, a {\em quantaloid $\Q$} is a $\Sup$-enriched category (and quantales are exactly quantaloids with a single object, i.e.\ monoids in $\Sup$), a {\em homomorphism $H\:\Q\to\R$} between quantaloids is a $\Sup$-enriched functor, and so forth. 

On the other hand, quantaloids are also (very particular) {\em bicategories} \cite{ben67}, so the general notions from bicategory theory apply as well. This point of view is important when defining lax morphisms between quantaloids, or adjunctions in a quantaloid, or quantaloid-enriched categories, because these concepts are not ``naturally'' catered for by $\Sup$-enriched category theory alone.

In his seminal paper \cite{law72}, Lawvere shows how both metric spaces and ordered sets are a guiding example of enriched categories---quantale-enriched, that is. In this section we shall reproduce some of his insightful examples, but we do explain the (slightly) more general case of quantaloid-enrichment, for in the next section it will be crucial to have this ready for the case of partial metric spaces (as will become clear there). For clarity's sake, and to fix our notations, we shall spell out some of these abstract categorical definitions in more elementary terms.

\subsection{Quantaloids, quantales}

A {\em quantaloid $\Q$} is a category in which, for any two fixed objects $A$ and $B$, the set $\Q(A,B)$ of morphisms from $A$ to $B$ is ordered and admits all suprema, in such a way that composition distributes on both sides over arbitrary suprema: whenever $f\:A\to B$, $(g_i\:B\to C)_{i\in I}$ and $h\:C\to D$, then $h\circ(\bigvee_ig_i)\circ g=\bigvee_i(h\circ g_i\circ f)$. We write $1_A\:A\to A$ for the identity morphism on an object $A$.

A crucial property of quantaloids is their so-called {\em closedness}. Precisely, for any morphism $f\:A\to B$ in a quantaloid $\Q$ and any object $X$ of $\Q$, both
$$-\circ f\:\Q(B,X)\to\Q(A,X)\mbox{ and }f\circ -\:\Q(X,A)\to\Q(X,B)$$
({\em pre-} and {\em post-composition with $f$}) are supremum-preserving functions between complete lattices. Therefore these maps have right adjoints, called {\em lifting} and {\em extension through $f$}, and we shall write these as:
$$-\sw f\:\Q(A,X)\to\Q(B,X)\mbox{ and }f\se-\:\Q(X,B)\to\Q(X,A).$$

A {\em quantale $Q$} is, by definition, a one-object quantaloid. Equivalently, a quantale $Q=(Q,\bigvee,\circ,1)$ is a sup-lattice $(Q,\bigvee)$ equipped with a monoidal structure $(\circ,1)$ in such a way that multiplication distributes on both sides over suprema. Liftings and extensions in a quantale are often called {\em (left/right) residuations}, especially in the context of multi-valued logics.

The above says in particular that a quantaloid is a {\em locally complete and cocomplete closed bicategory} (and a quantale is a complete and cocomplete closed monoidal category). Importantly, we can therefore use all bicategorical notions in any quantaloid: adjoint pairs, monads, 2-dimensional universal properties, etc.
\begin{example}\label{a12}
Any locale (= complete Heyting algebra) $H$ is a quantale $H=(H,\wedge,\top)$. In fact, cHa's are precisely those quantales which are integral (meaning that $1=\top$) and idempotent (meaning that $f^2=f$ for every $f\in Q$); they are of course also commutative. In particular shall we write $\2=(\2,\wedge,1)$ for the 2-element Boolean algebra $\{0<1\}$ viewed as quantale. 
\end{example}
\begin{example}\label{b1}
Writing $[0,\infty]\op$ for the set of positive real numbers extended with $+\infty$, with the {\it opposite} of the natural order, it is easy to check that $R=([0,\infty]\op,+,0)$ is a (commutative and integral, but not idempotent) quantale; throughout this article we shall refer to it as {\em Lawvere's quantale of positive real numbers}, to honor its first appearence in \cite{law72}.
\end{example}
In the remainder of this section, these quantales will be our main examples. It is however necessary to develop the general quantaloidal case for reasons that will become clear in the next section. To give but one example of a non-commutative, non-integral, non-idempotent quantale, consider the set $\Sup(L,L)$ of supremum-preserving functions on a complete lattice $L$. In fact, the category $\Sup$ of complete lattices and supremum-preserving morphisms itself is the example {\it par excellence} of a (large) quantaloid. Many more examples can be found in the references.

\subsection{Quantaloid-enriched categories, functors, distributors}\label{abc}

In all that follows, we fix a small quantaloid $\Q$; we shall write $\Q_0$ for its set of objects and $\Q_1$ for its set of morphisms. 

A {\em $\Q$-enriched category $\mathbb{C}$} (or {\em $\Q$-category $\bbC$} for short) consists of 
\begin{itemize}\setlength{\itemindent}{3em}
\item[{\tt (obj)}] a set $\mathbb{C}_0$ of ``objects'',
\item[{\tt (typ)}] a unary ``type'' predicate $t\:\bbC_0\to\Q_0\:x\mapsto tx$,
\item[{\tt (hom)}] a binary ``hom'' precidate $\bbC\:\bbC_0\times\bbC_0\to\Q_1\:(x,y)\mapsto\bbC(x,y)$,
\end{itemize}
such that the following conditions hold:
\begin{itemize}\setlength{\itemindent}{3em}
\item[{\tt [C0]}] $\mathbb{C}(x,y)\colon ty\to tx$,
\item[{\tt [C1]}] $\mathbb{C}(x,y)\circ\mathbb{C}(y,z)\leq\mathbb{C}(x,z)$,
\item[{\tt [C2]}] $1_{tx}\leq\mathbb{C}(x,x)$.
\end{itemize}
Note how, when applied to a quantale $Q$ (viewed as a one-object quantaloid $\Q$), the above definition symplifies: the ``type'' predicate becomes obsolete and condition {\tt [C0]} trivialises. This is the case in our main examples (for now):
\begin{example}\label{c}
Let $\2=(\2,\wedge,\top)$ be the 2-element Boolean algebra. A $\2$-category $\bbA$ is exactly an ordered\footnote{An {\em order} is a transitive and reflexive relation; if we want it to be anti-symmetric, then we shall explicitly mention so.} set: for we may interpret that $\bbA(x,y)\in\{0,1\}$ is 1 if and only if $x\leq y$. 
\end{example}
\begin{example}\label{d}
Considering the Lawvere quantale $R=([0,\infty]\op,+,0])$, an $R$-category $\bbX$ consists of a set $X=\bbX_0$ together with a function $d=\bbX(-,-)\:X\times X\to[0,\infty]$ such that $d(x,y)+d(y,z)\geq d(x,z)$ and $0=d(x,x)$; all other data and conditions are trivially satisfied. Such an $(X,d)$ is a {\em generalised metric space} \cite{law72}; adding symmetry ($d(x,y)=d(y,x)$), separatedness (if $d(x,y)=0=d(y,x)$ then $x=y$) and finiteness ($d(x,y)<\infty$) makes it a metric space in the sense of Fréchet \cite{fre06}\footnote{In the introduction to the 2002 reprint of his \cite{law72}, Lawvere explains that ``the evidence is compelling that the usually-given more restrictive definition [of `metric space'] was too hastily fixed'', for a ``metric space can always be symmetrized [...], but it is often better to delay that until the last stage of a calculation. [...] Likewise, metric spaces need not have all distances finite, but one can (when appropriate) restrict consideration to those points which have finite distance to a given part. The coproducts [of metric spaces] have infinite distance between points in different summands''.}.
\end{example}
A {\em $\Q$-functor} $F\:\bbC\to\bbD$ between two $\Q$-categories is 
\begin{itemize}
\setlength{\itemindent}{3em}
\item[{\tt (map)}] an ``object map'' $F\:\bbC_0\to\bbD_0\:x\mapsto Fx$
\end{itemize}
satisfying, for all $x,x'\in\bbC_0$,
\begin{itemize}
\setlength{\itemindent}{3em}
\item[{\tt (F0)}] $t(Fx)=tx$,
\item[{\tt (F1)}] $\bbC(x',x)\leq\bbD(Fx',Fx)$.
\end{itemize}
Such $\Q$-functors $F\:\bbA\to\bbB$ and $G\:\bbB\to\bbC$ can be composed in the obvious way to produce a new functor $G\circ F\:\bbA\to\bbC$, and the identity object map provides for the identity functor $1_{\bbA}\:\bbA\to\bbA$. Thus $\Q$-categories and $\Q$-functors are the objects and morphisms of a (large) category $\Cat(\Q)$. 
\begin{example}\label{e}
There is no difficulty in proving that $\Cat(\2)$ is exactly $\Ord$, the category of ordered sets and order-preserving functions.
\end{example}
\begin{example}\label{f}
Upon identifying two $R$-categories $\bbX$ and $\bbY$ with two generalised metric spaces $(X,d_X)$ and $(Y,d_Y)$, it is straightforward to verify that an $R$-functor $F\:\bbX\to\bbY$ can be identified with a 1-Lipschitz function $f\:X\to Y$, i.e.\ $d_X(x',x)\geq d_Y(fx',fx)$. We shall write $\Gen\Met$ for the category $\Cat(R)$.
\end{example}
To make $\Q$-enriched category theory really interesting, we need to introduce a second kind of morphism between $\Q$-categories: a {\em $\Q$-distributor}\footnote{We use the terminology of \cite{ben67}; the same concept has been named `module' or `bimodule' (particularly by the Australian category theorists) and `profunctor' (particularly in the context of proarrow equipments).} $\Phi\:\bbC\dist\bbD$ between two $\Q$-categories is 
\begin{itemize}\label{g}
\setlength{\itemindent}{3em}
\item[{\tt (matr)}] a ``matrix'' $\Phi\:\bbD_0\times\bbC_0\to\Q_1\:(y,x)\mapsto\Phi(y,x)$
\end{itemize}
satisfying, for all $x,x'\in\bbC_0$ and $y,y'\in\bbD_0$,
\begin{itemize}
\setlength{\itemindent}{3em}
\item[{\tt (D0)}] $\Phi(y,x)\:tx\to ty$,
\item[{\tt (D1)}] $\bbD(y',y)\circ\Phi(y,x)\leq\Phi(y',x)$,
\item[{\tt (D2)}] $\Phi(y,x)\circ\bbC(x,x')\leq\Phi(y,x')$.
\end{itemize}
For two consecutive distributors $\Phi\:\bbC\dist\bbD$ and $\Psi\:\bbD\dist\bbE$, the composite distributor is written as $\Psi\tensor\Phi\:\bbC\dist\bbE$ and computed as, for $x\in\bbC$ and $z\in\bbE$,
$$(\Psi\tensor\Phi)(z,x)=\bigvee_{y\in\bbD_0}\Psi(z,y)\circ\Phi(y,x).$$
The identity distributor $\id_{\bbC}\:\bbC\dist\bbC$ has elements, for $x,x'\in\bbC_0$,
$$\id_{\bbC}(x',x)=\bbC(x',x).$$
Two parallel distributors $\Phi,\Phi'\:\bbC\dist\bbD$ are ordered `elementwise': 
$$\Phi\leq\Phi'\stackrel{\rm def}{\iff}\Phi(y,x)\leq\Phi'(y,x)\mbox{ for all }(x,y)\in\bbC_0\times\bbD_0,$$
and therefore the supremum of a family of parallel distributors, say $\Phi_i\:\bbC\dist\bbD$, has elements
$$(\bigvee_i\Phi_i)(y,x)=\bigvee_i\Phi_i(y,x).$$
In this manner, distributors are the morphisms of a (large) quantaloid $\Dist(\Q)$.
\begin{example}\label{h}
A distributor between ordered sets $(X,\leq)$ and $(Y,\leq)$ (viewed as $\2$-enriched categories) is precisely a relation $R\subseteq Y\times X$ which is upclosed in $X$ and downclosed in $Y$: if $y'\leq y R x\leq x'$ then $y'Rx$.
\end{example}
The importance of $\Dist(\Q)$ being a {\em quantaloid} -- instead of a mere {\em category} -- cannot be overestimated: for it implies that $\Dist(\Q)$ is closed, that we can speak of adjoint pairs of distributors, that we can perform 2-categorical constructions involving $\Q$-categories and distributors, and so on. For instance, it is not difficult to verify that we can compute liftings and extensions in $\Dist(\Q)$ by the following formulas:
\begin{equation}\label{i}
\begin{tabular}{cc}
$\begin{array}{c}
\xymatrix@=5ex{
\bbC\ar[dr]|{\distsign}_{\Phi}\ar@{.>}[rr]|{\distsign}^{\Psi\searrow\Phi} & &\bbD\ar[dl]|{\distsign}^{\Psi} \\
 & \bbE}
\end{array}$
&
$\displaystyle(\Psi\searrow\Phi)(y,x)=\bigwedge_{z\in\bbE_0}\Psi(z,y)\searrow\Phi(z,x)$
\end{tabular}
\end{equation}
\begin{equation}\label{j}
\begin{tabular}{cc}
$\begin{array}{c}
\xymatrix@=5ex{
\bbC\ar@{.>}[rr]|{\distsign}^{\Psi\swarrow\Phi} & &\bbD \\
 & \bbE\ar[ul]|{\distsign}_{\Phi}\ar[ur]|{\distsign}^{\Psi}}
\end{array}$
&
$\displaystyle(\Psi\swarrow\Phi)(y,x)=\bigwedge_{z\in\bbE_0}\Psi(y,z)\swarrow\Phi(x,z)$
\end{tabular}
\end{equation}
In contrast, there is {\it a priori} no extra structure in $\Cat(\Q)$---but luckily $\Cat(\Q)$ embeds naturally in $\Dist(\Q)$, and therefore inherits some of the latter's structure. 

Indeed, every functor $F\:\bbA\to\bbB$ determines an adjoint pair of distributors
$$\xymatrix@C=8ex{\bbA\ar@{}[r]|{\perp}\ar@/^2ex/[r]|{\distsign}^{F_*} & \bbB\ar@/^2ex/[l]|{\distsign}^{F^*}}$$
defined by $F_*(b,a)=\bbB(b,Fa)$ and $F^*(a,b)=\bbB(Fa,b)$. We shall say that the left adjoint $F_*$ is the {\em graph} of the functor $F$, whereas the right adjoint $F^*$ is its {\em cograph}. Taking graphs and cographs extends to a pair of functors, one covariant and the other contravariant:
\begin{equation}\label{a1}
\Cat(\Q)\to\Dist(\Q)\:\Big(F\:\bbA\to\bbB\Big)\mapsto\Big(F_*\:\bbA\dist\bbB\Big),
\end{equation}
\begin{equation}\label{a1.1}
\Cat(\Q)\op\to\Dist(\Q)\:\Big(F\:\bbA\to\bbB\Big)\mapsto\Big(F^*\:\bbB\dist\bbA\Big).
\end{equation}
With this, we make $\Cat(\Q)$ a {\em locally ordered category} by defining, for any parallel pair of functors $F,G\:\bbA\to\bbB$,
$$F\leq G\stackrel{\rm def}{\iff}F_*\leq G_*\iff F^*\geq G^*.$$
Whenever $F\leq G$ and $G\leq F$, we write $F\cong G$ and say that these functors are isomorphic.

With all this, we can now naturally speak of adjoint $\Q$-functors, fully faithful $\Q$-functors, equivalent $\Q$-categories, (co)monads on $\Q$-categories, etc. 

\subsection{Presheaves and completions}\label{X}

If $X$ is an object of $\Q$, then we write $\1_X$ for the $\Q$-category defined by $(\1_X)_0=\{*\}$, $t*=X$ and $\1_X(*,*)=1_X$. Similarly, if $f\:X\to Y$ is a morphism in $\Q$, then we write $(f)\:\1_X\dist\1_Y$ for the distributor defined by $(f)(*,*)=f$. In doing so we get an injective homomorphism
\begin{equation}\label{a2}
i\:\Q\to\Dist(\Q)\:\Big(f\:X\to Y\Big)\mapsto\Big((f)\:\1_X\dist\1_Y\Big)
\end{equation}
which allows us to (tacitly) identify $\Q$ with its image in $\Dist(\Q)$.

A {\em contravariant $\Q$-presheaf $\phi$} of type $X\in\Q_0$ on a $\Q$-category $\bbC$ is, by definition, a distributor $\phi\:\1_X\dist\bbC$. For two such presheaves $\phi\:\1_X\dist\bbC$ and $\psi\:\1_Y\dist\bbC$, the lifting $(\psi\searrow\phi)\:\1_X\dist\1_Y$ in the quantaloid $\Dist(\Q)$ is a distributor with a single element, which can therefore be identified with an arrow from $X$ to $Y$ in $\Q$:
$$\begin{array}{c}\xymatrix@C=3ex{ & {\1_Y}\ar[dr]|{\distsign}^{\psi} \\ {\1_X}\ar[rr]|{\distsign}_{\phi}\ar@{.>}[ur]|{\distsign}^{\psi\searrow\phi} & &\bbC}\end{array}$$
We can thus define the {\em $\Q$-category $\P\bbC$ of contravariant presheaves} on $\bbC$ to have as objects the contravariant presheaves on $\bbC$ (of all possible types); the type of a presheaf $\phi\:\1_X\dist\bbC$ is $X$; and the hom $\P\bbC(\psi,\phi)$ is the (single element of the) lifting $\psi\searrow\phi$.

For $\Phi\:\bbC\dist\bbD$ it is easy to see that $\P\bbC\to\P\bbD\:\psi\mapsto\Phi\tensor\psi$ is a $\Q$-functor. This action easily extends to form a 2-functor $\P\:\Dist(\Q)\to\Cat(\Q)$, and by composition with the inclusion 2-functor $\Cat(\Q)\to\Dist(\Q)$ of \eqref{a1} we find a functor $\P\:\Cat(\Q)\to\Cat(\Q)$. The latter turns out to be a KZ-doctrine (i.e.\ a 2-monad such that ``algebras are adjoint to units''); its category of algebras is denoted $\Cocont(\Q)$: its objects are so-called cocomplete $\Q$-categories, and its morphisms are the cocontinuous $\Q$-functors. The unit of the KZ-doctrine consists of the so-called (fully faithful) {\em Yoneda embeddings}
$$Y_{\bbC}\:\bbC\to\P\bbC\:x\mapsto\bbC(-,x).$$
The presheaves in the image of $Y_{\bbC}$ are said to be {\em representable} (by objects of $\bbC$); the Yoneda embedding $Y_{\bbC}$ exhibits $\P\bbC$ to be the {\em free cocompletion} of $\bbC$. And the {\em Yoneda Lemma} says that, for any $\phi\in\P\bbC$ and any $x\in\bbC$, we have $\P\bbC(Y_{\bbC}x,\phi)=\phi(x)$.

Dually, a covariant $\Q$-presheaf $\kappa$ of type $X\in\Q_0$ on a $\Q$-category $\bbC$ is a distributor like $\kappa\:\bbC\dist\1_X$; they are the objects of a $\Q$-category $\P\+\bbC$, in which $\P\+\bbC(\lambda,\kappa)=\lambda\swarrow\kappa$. The obvious 2-functor $\P\+\:\Dist(\Q)\to\Cat(\Q)\op$ composes with the inclusion 2-functor in \eqref{a1.1} to form a co-KZ-doctrine $\P\+\:\Cat(\Q)\to\Cat(\Q)$, whose category of algebras $\Cont(\Q)$ consists of complete $\Q$-categories and continuous $\Q$-functors. The (fully faithful) Yoneda embeddings (also: free completions)
$$Y\+_{\bbC}\:\bbC\to\P\+\bbC\:x\mapsto\bbC(x,-),$$
form the unit of the co-KZ-doctrine.
\begin{example}\label{k}
For an ordered set $(X,\leq)$, viewed as a $\2$-category, a contravariant presheaf on $(X,\leq)$ is exactly a downclosed subset of $X$. For two contravariant presheaves $\phi,\psi$ on $(X,\leq)$, one straightforwardly computes that $\psi\searrow\phi\in\{0,1\}$ is equal to $1$ if and only if $\psi\subseteq\phi$ (as subsets of $X$). That is to say, the free cocompletion $\P(X,\leq)$ -- in the sense of $\2$-category theory -- is precisely the free sup-lattice on $(X,\leq)$: the set of downclosed subsets ordered by inclusion; and the Yoneda embedding $Y_{(X,\leq)}\:(X,\leq)\to\P(X,\leq)$ sends an element $x\in X$ to the principal downclosed set $\downarrow\! x$. Hence, upon identifying $\Ord$ with $\Cat(\2)$, the KZ-doctrine $\P\:\Ord\to\Ord$ is the free sup-lattice monad. In an entirely dual fashion, $\P\+(X,\leq)$ is the free inf-lattice: its elements are the upclosed subsets of $X$, ordered by containment (the {\it opposite} of inclusion); and the co-KZ-doctrine $\P\+\:\Ord\to\Ord$ is the free inf-lattice monad.
\end{example}

A {\em Cauchy presheaf} on a $\Q$-category $\bbC$ is a (contravariant) presheaf $\phi\:\1_X\dist\bbC$ which -- as morphism in $\Dist(\Q)$ -- has a right adjoint, which we shall then write as $\phi^*\:\bbC\dist\1_X$. The $\Q$-category $\bbC\cc$ is, by definition, the full subcategory of $\P\bbC$ whose objects are the Cauchy presheaves. Furthermore, the Yoneda embedding $Y_{\bbC}\:\bbC\to\P\bbC$ co-restricts to a functor $I_{\bbC}\:\bbC\to\bbC\cc$, which is now called the {\em Cauchy completion of $\bbC$}. Those Cauchy completions form the unit of a KZ-doctrine $(-)\cc\:\Cat(\Q)\to\Cat(\Q)$; the category of algebras $\Cat(\Q)\cc$ contains the Cauchy complete $\Q$-categories and (all) $\Q$-functors between them. In fact, a $\Q$-category $\bbC$ is Cauchy complete if and only if, for every left adjoint distributor $\Phi\:\bbX\dist\bbC$, there exists a (necessarily essentially unique) functor $F\:\bbX\to\bbC$ such that $F_*=\Phi$, if and only if for every Cauchy presheaf $\phi\:\1_X\dist\bbC$, there exists a (necessarily essentially unique) object $c\in\bbC_0$ such that $\bbC(-,c)=\phi$. 
\begin{example}\label{l}
The designations ``Cauchy completion'' and ``Cauchy complete'' are motivated by the interpretation of these concepts in generalised metric spaces \cite{law72}, as follows. Every Cauchy sequence $(x_n)_n$ in a metric space $(X,d)$ -- suitably viewed as an $R$-category $\bbX$, cf.\ Example \ref{d} -- defines an adjoint pair $\phi\dashv\phi^*$ of $R$-distributors $\phi\:\1\dist\bbX$ and $\phi^*\:\bbX\dist\1$ by putting
$$\phi(y)=\lim_{n\to\infty} d(y,x_n)\mbox{ \quad and \quad }\phi^*(y)=\lim_{n\to\infty} d(x_n,y)$$
for all $y\in X$. Moreover, $(x_n)_n$ converges to $x\in X$ precisely when $\phi=d(-,x)$. Conversely, a left adjoint $R$-distributor $\phi:\1\dist X$, with right adjoint $\phi^*:X\dist\1$, satisfies
$$\bigwedge_{x\in X}\phi^*(x)+\phi(x)=0,$$
so that a Cauchy sequence $(x_n)_n$ can be built by choosing $x_n$ with $\phi(x_n)+\phi^*(x_n)\le\frac{1}{n}$. After identifying equivalent Cauchy sequences, these two processes turn out to be inverse to each other; and therefore {\em a generalised metric space is Cauchy complete in the traditional sense if and only if it is Cauchy complete in the sense of enriched category theory}.
\end{example}

Such (Cauchy) (co)complete $\Q$-categories can be studied and characterised in many different ways, and have a wealth of applications; we refer to \cite{bonbrerut98,gar14,wal81} for examples.

\subsection{Involution and symmetry}

In this subsection we shall suppose that $\Q$ is a quantaloid equipped with an {\em involution}: a function $\Q_1\to\Q_1\:f\mapsto f\o$ on the morphisms of $\Q$ such that $f\leq g$ implies $f\o\leq g\o$, $(g\circ f)\o=f\o\circ g\o$, and $f^{\sf oo}=f$. It is easy to check that this automatically extends to a (necessarily invertible) homomorphism $(-)\o\:\Q\op\to\Q$ which is the identity on objects and satisfies $f^{\sf oo}=f$ for any morphism $f$ in $\Q$. The pair $(\Q,(-)\o)$ is said to form an {\em involutive quantaloid}, but we leave the notation for the involution understood when no confusion can arise. 

A $\Q$-category $\bbA$ is {\em symmetric} if we have, for all $x,y\in\bbA_0$,
\begin{itemize}\setlength{\itemindent}{3em}
\item[{\tt [C4]}] $\bbA(x,y)=\bbA(y,x)\o$.
\end{itemize}
We shall write $\Sym\Cat(\Q)$ for the full sub-2-category of $\Cat(\Q)$ determined by the symmetric $\Q$-categories. The full embedding $\Sym\Cat(\Q)\hookrightarrow\Cat(\Q)$ has a right adjoint functor\footnote{But the right adjoint is not a 2-functor, so this is not a 2-adjunction!},
$$\Sym\Cat(\Q)\xymatrix@=8ex{\ar@{}[r]|{\perp}\ar@<1mm>@/^2mm/[r]^{\mathrm{incl.}} & \ar@<1mm>@/^2mm/[l]^{(-)\s}}\Cat(\Q),$$
which sends a $\Q$-category $\bbC$ to the symmetric $\Q$-category $\bbC\s$ whose objects (and types) are those of $\bbC$, but for any two objects $x,y$ the hom-arrow is 
$$\bbC\s(y,x):=\bbC(y,x)\wedge\bbC(x,y)\o.$$ 
A functor $F\:\bbC\to\bbD$ is sent to $F\s\:\bbC\s\to\bbD\s\:x\mapsto Fx$. The counit of this adjunction has components $S_{\bbC}\:\bbC\s\to\bbC\:x\mapsto x$.
\begin{example}\label{m}
A commutative quantale is the same thing as a quantale for which the identity map is an involution; in particular can we thus consider $\2=\{0,1\}$ and $R=[0,\infty]\op$ to be involutive. For both ordered sets and generalised metric spaces it is straightforward to interpret the symmetry axiom: an order $(X,\leq)$ is symmetric qua $\2$-enriched category if and only if the order-relation $\leq$ is symmetric (and so it is an {\it equivalence relation} on $X$); and a generalised metric space $(X,d)$ is symmetric qua $R$-enriched category if and only if the distance function $d$ is symmetric (and so $(X,d)$ is an {\it écart} in the sense of \cite{bou48}).
\end{example}

Composing right and left adjoint, we find a comonad $(-)\s\:\Cat(\Q)\to\Cat(\Q)$ whose coalgebras are exactly the symmetric $\Q$-categories. In the previous subsection we had the important monad $(-)\cc\:\Cat(\Q)\to\Cat(\Q)$ whose algebras are exactly the Cauchy complete $\Q$-categories. Because both arise from (co)reflexive subcategories, there can be at most one distributive law of the Cauchy monad over the symmetrisation comonad; here is a sufficient condition for its existence:
\begin{proposition}[\cite{heystu11}]\label{n}
If an involutive quantaloid is {\em Cauchy-bilateral}, that is to say, for each family $(f_i\:X\to X_i, g_i\:X_i\to X)_{i\in I}$ of morphisms in $\Q$,
$$\left.\begin{array}{c}
\forall j,k\in I:\ f_k\circ g_j\circ f_j\leq f_k \\[1ex]
\forall j,k\in I:\ g_j\circ f_j\circ g_k\leq g_k \\[1ex]
1_X\leq\displaystyle\bigvee_{i\in I}g_i\circ f_i
\end{array}\right\}\Longrightarrow\
1_X\leq\bigvee_{i\in I}(g_i\wedge f_i\o)\circ(g_i\o\wedge f_i),$$
then there is a distributive law $L$ of $(-)\cc\:\Cat(\Q)\to\Cat(\Q)$ over $(-)\s\:\Cat(\Q)\to\Cat(\Q)$ with components
$$L_{\bbC}\:(\bbC\s)\cc\to(\bbC\cc)\s\:\phi\mapsto(S_{\bbC})_*\tensor\phi.$$
The category of $L$-bialgebras contains precisely those $\Q$-categories which are both symmetric and Cauchy complete, and all $\Q$-functors between these.
\end{proposition}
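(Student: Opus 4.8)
The plan is to verify directly that the candidate components $L_{\bbC}$ define a distributive law of the monad $(-)\cc$ over the comonad $(-)\s$, and that the stated Cauchy-bilateral implication is exactly what is needed to make the construction work. Since both $(-)\cc$ and $(-)\s$ arise from (co)reflective full subcategories, I would first recall (or reprove in one line) the general nonsense: a distributive law of $(-)\cc$ over $(-)\s$ amounts to the assertion that the symmetrisation $(-)\s$ lifts to a comonad on $\Cat(\Q)\cc$, equivalently that $(-)\cc$ restricts to a monad on $\Sym\Cat(\Q)$, equivalently that the Cauchy completion of a symmetric $\Q$-category is again symmetric. So the real content to check is: \emph{if $\bbC$ is symmetric and Cauchy-bilateral holds, then $\bbC\cc$ is symmetric}, i.e.\ for any two Cauchy presheaves $\phi,\psi$ on $\bbC$ one has $\bbC\cc(\psi,\phi)=\bbC\cc(\phi,\psi)\o$, equivalently $\psi\searrow\phi=(\phi\searrow\psi)\o$ in $\Q$. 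Once that is in hand, the components $L_{\bbC}$ are forced and their formula $\phi\mapsto (S_{\bbC})_*\tensor\phi$ is just the composite ``include $\bbC\s$ into $\bbC$, then Cauchy-complete'' read through the Yoneda/presheaf identifications; the naturality and the two coherence diagrams (compatibility with the monad multiplication/unit and the comonad comultiplication/counit) then reduce to routine diagram chases using the KZ/co-KZ structure already set up in the excerpt.

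For the core symmetry statement I would argue at the level of distributors. Let $\phi\:\1_X\dist\bbC$ be a Cauchy presheaf with right adjoint $\phi^*\:\bbC\dist\1_X$; spelling out $\id_{\1_X}\le\phi^*\tensor\phi$ and $\phi\tensor\phi^*\le\id_\bbC$ in coordinates gives, with $f_i:=\phi(i)\:ti\to X$ and $g_i:=\phi^*(i)\:X\to ti$ ranging over $i\in\bbC_0$, exactly the three hypotheses of the Cauchy-bilateral condition: $f_k\circ g_j\circ f_j\le f_k$ and $g_j\circ f_j\circ g_k\le g_k$ come from the counit inequality composed on either side, and $1_X\le\bigvee_i g_i\circ f_i$ is the unit inequality. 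Now when $\bbC$ is symmetric, $\bbC\s=\bbC$ and one computes that the symmetrised presheaf $\phi\s$ has elements $\phi\s(i)=\phi(i)\wedge\phi^*(i)\o = f_i\wedge g_i\o$ (using $\bbC(i,*)\o$-style bookkeeping and $\bbC$ symmetric), while $(\phi^*)\s$ has elements $g_i\wedge f_i\o=(f_i\wedge g_i\o)\o$. The Cauchy-bilateral conclusion $1_X\le\bigvee_i (g_i\wedge f_i\o)\circ(g_i\o\wedge f_i)$ says precisely that this $\phi\s$ still admits $(\phi\s)^*$ as right adjoint with $(\phi\s)^*(i)=g_i\wedge f_i\o$, i.e.\ that $\phi\s$ is again a Cauchy presheaf and is ``self-adjoint up to involution''. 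From $\phi\s\dashv (\phi\s)^*$ with $(\phi\s)^* = $ the pointwise involute of $\phi\s$, the identity $\psi\searrow\phi = (\phi\searrow\psi)\o$ for Cauchy presheaves follows by the usual adjunction manipulation ($\psi\searrow\phi$ equals the single element of $\phi^*\tensor\psi$, and symmetry of everything in sight swaps the two sides under $(-)\o$). Hence $\bbC\cc$ satisfies {\tt [C4]}.

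With symmetry of Cauchy completions established, I would then assemble the distributive law. Define $L_\bbC$ as the unique comparison functor making the square ``$(\bbC\s)\cc \to (\bbC\cc)\s$'' commute over the two Yoneda/inclusion legs; concretely a Cauchy presheaf $\phi$ on $\bbC\s$ is sent to $(S_\bbC)_*\tensor\phi$, which is the left adjoint distributor $\1_X\dist\bbC$ obtained by composing with the graph of the counit $S_\bbC\:\bbC\s\to\bbC$, and which lands in $(\bbC\cc)\s$ precisely by the previous paragraph applied to $\bbC\s$ (noting $(\bbC\s)\s=\bbC\s$). Naturality of $L$ in $\bbC$ is immediate from naturality of $S$ and functoriality of $(-)_*\tensor(-)$. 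The four compatibility axioms of a distributive law then become: compatibility with the unit of $(-)\cc$ (the Yoneda corestriction $I$) — diagrammatically $L_\bbC\circ I_{\bbC\s} = (I_\bbC)\s$, true by construction; compatibility with the multiplication of $(-)\cc$ — follows because $(-)\cc$ is a KZ-doctrine, so both composites are determined by their action on representables and agree there; compatibility with the counit of $(-)\s$ (namely $S$) — reduces to $S_{\bbC\cc}\circ L_\bbC = (S_\bbC)\cc$, again a representable-level check; and compatibility with the comultiplication of $(-)\s$ — similar. Finally, the identification of the $L$-bialgebras: an $L$-bialgebra is a $\Q$-category carrying both a $(-)\cc$-algebra structure (hence Cauchy complete) and a $(-)\s$-coalgebra structure (hence symmetric) compatibly; since both doctrines come from full (co)reflective subcategories the structures are properties not extra data, the compatibility is automatic, and the bialgebra morphisms are simply all $\Q$-functors — so the bialgebra category is the full subcategory of $\Cat(\Q)$ on the symmetric Cauchy-complete $\Q$-categories, as claimed.

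I expect the main obstacle to be the core symmetry computation, specifically keeping the involution bookkeeping straight: one must be careful that $\phi\s$ as defined by the symmetrisation comonad on presheaves really has elements $\phi(i)\wedge\phi^*(i)\o$ (this uses that the right adjoint of a Cauchy presheaf is computed pointwise as a lifting, plus {\tt [C4]} for $\bbC$), and then that the Cauchy-bilateral conclusion is not merely $1_X\le\bigvee_i(g_i\wedge f_i\o)\circ(g_i\o\wedge f_i)$ as an inequality among morphisms but genuinely witnesses the adjunction $\phi\s\dashv(\phi\s)^*$ in $\Dist(\Q)$ — i.e.\ one also needs the companion inequality $\phi\s\tensor(\phi\s)^*\le\id_\bbC$, which should follow formally from $\phi\dashv\phi^*$ since meets only decrease things, so that $\bigl(\phi(i)\wedge\phi^*(i)\o\bigr)\circ\bigl(\phi^*(j)\wedge\phi(j)\o\bigr)\le\phi(i)\circ\phi^*(j)\le\bbC(i,j)$. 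Everything else is bicategorical routine given the KZ/co-KZ machinery already recalled in Section~\ref{X} and the symmetrisation adjunction of the present subsection.
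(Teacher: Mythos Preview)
The paper does not actually prove this proposition: it is quoted from \cite{heystu11}, and the text immediately following it merely explains its consequences and refers the reader to that source. So there is no ``paper's own proof'' to compare against; your sketch is essentially a reconstruction of the argument one expects (and finds) in \cite{heystu11}: reduce the existence of the distributive law to the single assertion that the Cauchy completion of a symmetric $\Q$-category is again symmetric, and verify that assertion by showing that for any Cauchy presheaf $\phi\dashv\phi^*$ on a symmetric $\bbC$ the ``symmetrised'' pair $\phi\s(i):=\phi(i)\wedge\phi^*(i)\o$ and $(\phi\s)^*(i):=\phi^*(i)\wedge\phi(i)\o$ is again an adjoint pair---the unit inequality being exactly the Cauchy-bilateral conclusion, the counit following trivially since meets decrease.

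That strategy is correct, but your write-up has one genuine gap. Having shown that $\phi\s\dashv(\phi\s)^*$, you jump to ``the identity $\psi\searrow\phi=(\phi\searrow\psi)\o$ follows by the usual adjunction manipulation\dots\ symmetry of everything in sight swaps the two sides''. But nothing yet is symmetric: what you need is $\phi^*=\phi\o$ (pointwise) for \emph{every} Cauchy presheaf, and this is the step you have not argued. It does follow, and quickly: from $\phi\s\leq\phi$ (pointwise, as left adjoints) one gets $\phi^*\leq(\phi\s)^*$ by the contravariance of taking right adjoints, while by construction $(\phi\s)^*\leq\phi^*$ pointwise; hence $(\phi\s)^*=\phi^*$, and by uniqueness of adjoints $\phi\s=\phi$. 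Unfolding $\phi\s=\phi$ and $(\phi\s)^*=\phi^*$ gives $\phi(i)\leq\phi^*(i)\o$ and $\phi^*(i)\leq\phi(i)\o$, i.e.\ $\phi^*=\phi\o$. Now the symmetry of $\bbC\cc$ really is one line:
\[
\bbC\cc(\psi,\phi)=\psi^*\tensor\phi=\psi\o\tensor\phi=(\phi\o\tensor\psi)\o=(\phi^*\tensor\psi)\o=\bbC\cc(\phi,\psi)\o.
\]
Once this is in place, your treatment of the distributive-law axioms and of the bialgebras is fine (both $(-)\cc$ and $(-)\s$ are idempotent and come from full (co)reflective subcategories, so structures are properties and compatibilities are automatic). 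A minor bookkeeping point: with the paper's conventions a presheaf $\phi\:\1_X\dist\bbC$ has components $\phi(i)\:X\to ti$, not $ti\to X$ as you wrote, so your $f_i$ and $g_i$ should be swapped when matching against the Cauchy-bilateral hypotheses; this does not affect the argument but would confuse a reader.
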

This means that, for such a Cauchy-bilateral $\Q$, the Cauchy completion of a symmetric $\Q$-category is again symmetric, and that the symmetrisation of a Cauchy complete $\Q$-category is again Cauchy complete; so the category of $L$-bialgebras can be computed, either by Cauchy-completing all symmetric $\Q$-categories, or by symmetrising all Cauchy complete $\Q$-categories. For more details we refer to \cite{heystu11}.
\begin{example}\label{o}
Every locale $H$ is Cauchy-bilateral (for the identity involution); in particular so is $\2$. But in the $\2$-enriched case, every $\2$-category is Cauchy complete!
\end{example}
\begin{example}\label{p}
The Lawvere quantale $R=([0,\infty]\op,+,0)$ is Cauchy-bilateral (again, for the identity involution), so the Cauchy completion of a symmetric generalised metric space is again a symmetric generalised metric metric space. (Perhaps this motivated Fréchet \cite{fre06} to include the symmetry axiom in his definition of `metric space'?)
\end{example}
The above example generalises, as follows:
\begin{example}\label{p.1}
Any linearly ordered, integral, commutative quantale $Q$ is Cauchy-bilateral (for the identity involution). Indeed, in this case the condition to be Cauchy-bilateral reduces to
$$1\leq\bigvee_i g_i\circ f_i\ \Longrightarrow\ 1\leq\bigvee_i(g_i\wedge f_i)^2$$
for any family $(f_i,g_i)_{i\in I}$ of pairs of elements of $Q$. But integrality of $Q$ assures that $g_i\circ f_i\leq g_i\wedge f_i$, so the hypothesis implies that
$$1\leq\bigvee_i g_i\circ f_i\leq\bigvee_i g_i\wedge f_i,$$
and therefore -- taking squares on both ends of this inequality -- also
$$1\leq(\bigvee_i g_i\wedge f_i)^2=\bigvee_{i,j}(g_i\wedge f_i)(g_j\wedge f_j).$$
Now it is linearity of $Q$ which makes $(g_i\wedge f_i)(g_j\wedge f_j)\leq (g_i\wedge f_i)^2\vee(g_j\wedge f_j)^2$, so that from the previous line we easily find the desired result. A {\em left-continuous $t$-norm} \cite{got01} is exactly an integral commutative quantale structure on the (linearly ordered) real unit interval; so here we find in particular all these to be Cauchy-bilateral.
\end{example}

\subsection{Homomorphisms, lax functors, change of base}

A {\em homomorphism $H\:\Q\to\R$} between quantaloids (and in particular quantales) is a functor, mapping $f\:A\to B$ in $\Q$ to $Hf\:HA\to HB$ in $\R$ and preserving composition and identities in the usual manner, which furthermore preserves local suprema: whenever $(f_i\:A\to B)_{i\in I}$ in $\Q$, then $H(\bigvee_if_i)=\bigvee_iHf_i$. (Note that, as a consequence, $H$ preserves local order.)  Homomorphisms $H\:\Q\to\R$ and $K\:\R\to\S$ compose to produce a new homomorphism $K\circ H\:\Q\to\S$, and on each quantaloid $\Q$ there is an identity homomorphism $1_{\Q}$; so (small) quantaloids and homomorphisms themselves are the objects and morphisms of a (large) category $\Quant$.

A {\em lax functor\footnote{What we really define here, is a {\em lax $\Ord$-functor}, i.e.\ a lax functor between quantaloids qua $\Ord$-enriched categories.}} $F\:\Q\to\R$ maps $f\:A\to B$ in $\Q$ to $Ff\:FA\to FB$ in $\R$ in such a way that
\begin{itemize}
\item if $f\leq f'$ in $\Q(A,B)$ then $Ff\leq Ff'$ in $\R(FA,FB)$,
\item if $f\:A\to B$ and $g\:B\to C$ in $\Q$, then $Fg\circ Ff\leq F(g\circ f)$ in $\R$,
\item for any $A$ in $\Q$, $1_{FA}\leq F1_A$ in $\R$.
\end{itemize}
Lax functors compose in the obvious manner, so there is a (large) category $\Quant\lax$ of (small) quantaloids and lax morphisms, containing $\Quant$. If a lax functor preserves all identities, then it is said to be {\em normal}; the composite of such is again a normal lax functor, so these are the morphisms of a category containing $\Quant$ and contained in $\Quant\lax$. (Other variations exist, e.g.\ lax functors which preserve all local infima, or only finite local infima, etc.)

Now suppose that $F\:\Q\to\R$ is a lax functor. If $\bbC$ is any $\Q$-category, then it is straightforward to define an $\R$-category $F\bbC$ with the same object set as $\bbC$ but with homs given by $F\bbC(y,x)=F(\bbC(y,x))$. This construction extends to distributors and functors, producing a 2-functor $\Cat(\Q)\to\Cat(\R)$ and a lax morphism $\Dist(\Q)\to\Dist(\R)$, both referred to as {\em change of base} functors.

For any quantaloid $\Q$ we can define the lax morphism $\Q\to\2$ which (obviously) sends every object of $\Q$ to the single object of $\2$, every arrow bigger or equal to an identity in $\Q$ to the non-zero arrow in $\2$, and all other arrows to zero. The change of base $\Cat(\Q)\to\Cat(\2)=\Ord$ thus associates to any $\Q$-category its {\em underlying ordered set} (and sends $\Q$-functors to monotone functions); precisely, it sends a $\Q$-category $\bbC$ to the order $(\bbC_0,\leq)$ where
$$x\leq y\mbox{ exactly when }tx=ty\mbox{ and }1_{tx}\leq\bbC(x,y).$$
The $\Q$-category $\bbC$ is said to be {\em skeletal} (or {\em separated}) when its underlying order is anti-symmetric. Even when $\bbC$ is not skeletal, $\P\bbC$ (and hence its full subcategory $\bbC\cc$) is.
\begin{example}\label{q}
Tautologically, an order $(X,\leq)$ is skeletal qua $\2$-enriched category if and only if the order-relation $\leq$ is anti-symmetric; in other words, $(X,\leq)$ is a {\it partially ordered set} (but we will avoid that terminology, leaving the adjective `partial' available for something quite different---see Subsection \ref{yz}).
\end{example}
\begin{example}\label{r}
Applied to the Lawvere quantale $R=([0,\infty]\op,+,0)$, the change of base into $\2$ becomes the functor $\Gen\Met\to\Ord$ which sends a generalised metric space $(X,d)$ to the ordered set $(X,\leq)$ in which $x\leq y$ precisely when $d(x,y)=0$. It follows that a generalised metric space $(X,d)$ is skeletal qua $R$-enriched category if and only if the distance function is separating in the (weak\footnote{A stronger sense would be to require that already $d(x,y)=0$ implies $x=y$, as is used in the (not so common) definition of a {\it quasi-metric}; but this stronger separation axiom has no use in the theory of ordered sets (it holds exactly for discrete orders), indicating that it is perhaps not the most useful categorical concept.}) sense that $d(x,y)=0=d(y,x)$ implies $x=y$. A symmetric and skeletal $(X,d)$ is thus the same thing as a metric $d\:X\times X\to [0,\infty]$ (allowing $\infty$). Clearly, if the metric is symmetric and/or separated then its underlying order is so too.
\end{example}
\begin{example}\label{s}
Up to now we have considered the ordered set $[0,\infty]\op$ as a quantale for the addition; and we saw that $([0,\infty]\op,+,0)$-enriched categories are generalised metric spaces. But we can also consider the locale $([0,\infty]\op,\vee,0)$---so it is the same underlying order, but now with binary supremum as binary operation. It is straightforward to check that a $([0,\infty]\op,\vee,0)$-enriched category is exactly a {\it generalised ultrametric space} $(X,d)$, i.e.\ a distance function $d\:X\times X\to[0,\infty]$ satisfying $d(x,x)=0$ and $d(x,y)\vee d(y,z)\geq d(x,z)$. (As for generalised metrics, also generalised ultrametrics can be symmetric and/or skeletal.) Because for any $a,b\in[0,\infty]\op$ we obviously have $a+b\geq a\vee b$, the identity function is a lax morphism from $([0,\infty]\op,\vee,0)$ to $([0,\infty]\op,+,0)$; the induced change of base functor is simply the inclusion of generalised ultrametric spaces into generalised metric spaces.
\end{example}
In Example \ref{13} and further we shall come back to this example.

\section{Partial metric spaces as enriched categories}\label{B}

\subsection{Diagonals}

It often happens in practice that quantaloids arise from quantales by one or another universal construction. We shall describe one such case, which will turn out to be crucial to describe the categorical content of partial metric spaces.

First we recall a definition from \cite{stu13}:
\begin{definition}\label{1}
Fixing two morphisms $f\colon A\to B$ and $g\colon C\to D$ in a quantaloid $\Q$, we say that a third morphism $d\colon A\to D$ in $\Q$ is a {\em diagonal from $f$ to $g$} if any (and thus both) of the following equivalent conditions holds:
\begin{enumerate}
\item\label{a} there exist $x\colon A\to C$ and $y\colon B\to D$ in $\Q$ such that $y\circ f=d=g\circ x$,
\item\label{b} $g\circ(g\se d)=d=(d\sw f)\circ f$.
\end{enumerate}
\end{definition}
\begin{proof}[Proof of the equivalence.]
Obviously $(\ref{b}\Rightarrow\ref{a})$ is trivial. Conversely, $d=g\circ x\leq g\circ (g\searrow d)\leq d$ holds because $g\circ x=d$ implies $x\leq g\searrow d$; similarly $d=(d\swarrow f)\circ f$ follows from $y\circ f=d$.
\end{proof}
The reason for the term ``diagonal'' is clear from a picture to accompany the first condition in the above definition: given the solid morphisms in the diagram
$$\xymatrix@=8ex{
A\ar[d]_f\ar[dr]^d\ar@{.>}[r]^x & C\ar[d]^g \\
B\ar@{.>}[r]_y & D}$$
in $\Q$, one seeks to add the dotted morphisms, to form a commutative diagram. The equivalent second condition then adds that, whenever such $x$ and $y$ exist, then there is a {\em canonical choice} for them, namely $x=g\se d$ and $y=d\sw f$.
\begin{proposition}[\cite{stu13}]\label{1.1}
For any (small) quantaloid $\Q$, a new (small) quantaloid $\mathcal{D(Q)}$ of {\em diagonals in $\Q$} is built as follows: 
\begin{itemize}
\item the objects of $\D(\Q)$ are the morphisms of $\Q$,
\item a morphism from $f$ to $g$ in $\D(\Q)$ is a diagonal from $f$ to $g$ in $\Q$,
\item the composition of two diagonals $d\colon f\to g$ and $e\colon g\to h$ is defined to be
$$e\circ_g d:=(e\sw g)\circ g \circ (g\se d),$$ 
\item the identity on $f$ is $f\colon f\to f$ itself,
\item and the supremum of a set of diagonals $(d_i\colon f\to g)_{i\in I}$ is computed ``as in $\Q$''.
\end{itemize}
\end{proposition}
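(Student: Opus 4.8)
The plan is to verify the quantaloid axioms for $\D(\Q)$ one by one, and the whole argument rests on two ``normal form'' descriptions of the composition that I would establish at the outset from Definition~\ref{1}(\ref{b}). For composable diagonals $d\colon f\to g$ and $e\colon g\to h$,
$$e\circ_g d \;=\; (e\sw g)\circ d \;=\; e\circ(g\se d)\,;$$
the first equality holds because $d$ is a diagonal \emph{into} $g$ (so $g\circ(g\se d)=d$) and the second because $e$ is a diagonal \emph{out of} $g$ (so $(e\sw g)\circ g=e$), in both cases by substituting into the three-factor defining formula. Feeding in the remaining defining equations $d=(d\sw f)\circ f$ and $e=h\circ(h\se e)$ then exhibits $e\circ_g d$ both as $\big((e\sw g)\circ(d\sw f)\big)\circ f$ and as $h\circ\big((h\se e)\circ(g\se d)\big)$, so by Definition~\ref{1} (direction \ref{a}$\Rightarrow$\ref{b}) it is indeed a diagonal from $f$ to $h$; the types also match up. Thus $\circ_g$ is well defined.

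For the unit laws, using the standard facts $1\le f\se f$ and $f\circ(f\se f)\le f$ (hence $f\circ(f\se f)=f$), and dually $(g\sw g)\circ g=g$, one computes from the three-factor formula that $d\circ_f f=(d\sw f)\circ f=d$ and $g\circ_g d=g\circ(g\se d)=d$ for every diagonal $d\colon f\to g$. Associativity for $d\colon f\to g$, $e\colon g\to h$, $k\colon h\to l$ is then a short calculation: applying the identity $n\circ_g d=n\circ(g\se d)$ to the diagonal $n=k\circ_h e$ out of $g$, and the identity $k\circ_h m=(k\sw h)\circ m$ to a diagonal $m$ into $h$, both $(k\circ_h e)\circ_g d$ and $k\circ_h(e\circ_g d)$ collapse to $(k\sw h)\circ e\circ(g\se d)$.

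It remains to supply the locally cocomplete structure with composition preserving suprema. Each $\D(\Q)(f,g)$ is a sub-poset of $\Q(A,D)$ (for $f\colon A\to B$, $g\colon C\to D$), and I would check it is closed under arbitrary suprema there: for diagonals $(d_i)_i$ from $f$ to $g$, distributivity of composition over suprema in $\Q$ and monotonicity of $g\se-$ give $\bigvee_i d_i=g\circ\big(\bigvee_i(g\se d_i)\big)\le g\circ\big(g\se\big(\bigvee_i d_i\big)\big)\le\bigvee_i d_i$, so $\bigvee_i d_i$ satisfies the left defining equation, and symmetrically the right one; and $\bot$ is a diagonal since composition preserves $\bot$. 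Distributivity of $\circ_g$ over suprema on each side is then read straight off the normal forms: $e\circ_g\big(\bigvee_i d_i\big)=(e\sw g)\circ\bigvee_i d_i=\bigvee_i(e\circ_g d_i)$ because post-composition with $e\sw g$ preserves joins, and $\big(\bigvee_i e_i\big)\circ_g d=\big(\bigvee_i e_i\big)\circ(g\se d)=\bigvee_i(e_i\circ_g d)$ because pre-composition with $g\se d$ preserves joins.

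The only step that requires genuine care is the pair of normal-form identities together with the well-definedness check: one must keep straight which leg of a diagonal plays the role of ``$f$'' in Definition~\ref{1}, and it is essential to use \emph{both} legs of \emph{both} diagonals to recognise $e\circ_g d$ as a diagonal. Everything afterwards — units, associativity, and especially distributivity over suprema — reduces to one-line computations in $\Q$; note in particular that a naive attempt to prove distributivity directly from the three-factor formula $(e\sw g)\circ g\circ(g\se d)$ would fail, since the operations $-\sw g$ and $g\se-$ do not preserve joins, and it is exactly the normal forms that get around this.
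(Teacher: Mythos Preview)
Your proof is correct. The paper does not actually prove this proposition---it is cited from \cite{stu13}---but the key device you isolate, the normal forms $e\circ_g d=(e\sw g)\circ d=e\circ(g\se d)$, is precisely what the paper records immediately afterwards in Remark~\ref{1.x} (that any $x,y,u,v$ making the diagram commute can be used in place of the canonical ones), so your argument is fully in the spirit of the paper's presentation.
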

\begin{remark}\label{1.x}
Regarding composition of diagonals, it is useful to point out that the formula given above for $e\circ_g d$ is really just one of many equivalent expressions for the composite arrow from the upper left corner to the lower right corner in the following commutative diagram:
$$\xymatrix@=8ex{
\cdot\ar[d]_f\ar[dr]|{\ d\ }\ar@{.>}[r]^x & \cdot\ar[d]_g\ar[dr]|{\ e\ }\ar@{.>}[r]^u & \cdot\ar[d]^h \\
\cdot\ar@{.>}[r]_y & \cdot\ar@{.>}[r]_v & \cdot}$$
Particularly, in doing so, one can {\em choose any $x,y,u,v$ that make the diagram commute}, not just the canonical $x=g\se d$, $y=d\sw f$, $u=h\se e$, $v=e\sw g$.
\end{remark}
There is an obvious full and faithful inclusion {\em homomorphism}\footnote{Better still, this embedding enjoys a powerful universal property: it is the {\em splitting-of-everything} in $\Q$; and consequently it is the unit of a 2-monad on the category $\Quant$ of small quantaloids. This has been described by Grandis \cite{gra02} for small categories.} of $\Q$ in $\D(\Q)$: 
\begin{equation}\label{7.1}
I\:\Q\to\D(\Q)\:\Big(f\:A\to B\Big)\mapsto\Big(f\:1_A\to 1_B\Big).
\end{equation}
It is thus a natural problem to study how properties of a given quantaloid $\Q$ extend (or not) to the larger quantaloid $\D(\Q)$. For later use we record a simple example:
\begin{example}\label{7.2}
Say that a quantaloid $\Q$ is {\it symmetric} whenever the identity function $\Q_1\to\Q_1$ is an involution on $\Q$; explicitly, this means that $\Q(A,B)=\Q(B,A)$ and $f\circ g=g\circ f$ for all objects $A,B$ and all morphisms $f,g$ of $\Q$. It is then a simple fact that $\Q$ is symmetric if and only if $\D(\Q)$ is. Note that a `symmetric quantaloid with a single object' is precisely a commutative quantale. So as a particular case we find here that, for any commutative quantale $Q$, the quantaloid $\D(Q)$ is symmetric.
\end{example}
In the next subsection we shall study a particular class of quantales and quantaloids -- the so-called {\it divisible} ones -- whose diagonals behave particularly well; thereafter we shall be interested in categories enriched in $\D(Q)$ whenever $Q$ is a divisible quantale.

\subsection{Divisible quantaloids}

In \cite{stu13} we first introduced our notion of {\it divisible quantaloid}, but that first definition contained some redundancies---so here we spell it out again, in a more optimal form.
\begin{definition}\label{10}
A quantaloid $\Q$ is {\em divisible}\footnote{Note how the notion of {\em divisibility} is self-dual: $\Q$ is divisible if and only if $\Q\op$ is. Put differently, formally it makes sense to define $\Q$ to be {\em semi-divisible} if, for all $d,e\:A\to B$ in $\Q$, $e\circ(e\se d)=d\wedge e$; and then $\Q$ is divisible if and only if both $\Q$ and $\Q\op$ are semi-divisible. 
} if it satisfies any (and thus all) of the following equivalent conditions:
\begin{enumerate}
\item\label{10.3.1} for all $d,e\:A\to B$ in $\Q$: $d\leq e$ if and only if there exist $x\:A\to A$ and $y\:B\to B$ such that $e\circ x=d=y\circ e$,
\item\label{10.3} for all $d,e\:A\to B$ in $\Q$: $d\leq e$ if and only if $e\circ(e\searrow d)=d=(d\swarrow e)\circ e$,
\item\label{10.1} for all $d,e\:A\to B$ in $\Q$: $e\circ (e\searrow d)=d\wedge e=(d\swarrow e)\circ e$.
\item\label{10.4} for all $e\:A\to B$ in $\Q$: $\D(\Q)(e,e)=\ \downarrow\! e$ (as sublattices of $\Q(A,B)$),
\item\label{10.5} for all $d,e\:A\to B$ in $\Q$: $\D(\Q)(d,e)=\ \downarrow\! (d\wedge e)$ (as sublattices of $\Q(A,B)$).
\end{enumerate}
\end{definition}
\begin{proof}[Proof of the equivalences.]
The implications $(\ref{10.3.1}\Leftarrow\ref{10.3}\Leftarrow\ref{10.1})$ hold trivially, as do ($\ref{10.3}\Leftrightarrow\ref{10.4}\Leftarrow\ref{10.5})$; and to see that $(\ref{10.3.1}\Rightarrow\ref{10.3})$ one merely needs to adapt slightly the argument given for the equivalence of the conditions in Definition \ref{1}. 

Now assume $(\ref{10.3})$; putting $e=1_A$ in the condition and using that $\Q(A,A)=\D(\Q)(1_A,1_A)$, shows that $\Q$ is necessarily integral (meaning that, for every object $A$, the identity morphism $1_A$ is the top element of $\Q(A,A)$). From $d\wedge e\leq e$ we get $d\wedge e=e\circ(e\se(d\wedge e))$; but $e\se(d\wedge e)=(e\se d)\wedge(e\se e)$ (because right adjoints preserve infima) and $e\se e=1_A$ (by integrality of $\Q$) so $(e\se d)\wedge(e\se e)=(e\se d)$; altogether, we find $d\wedge e=e\circ(e\se d)$. A similar computation proves that $d\wedge e=(d\sw e)\circ e$, so in all we proved $(\ref{10.3}\Rightarrow\ref{10.1})$.

To complete the proof we shall show that $(\ref{10.3}\Rightarrow\ref{10.5})$. First, we use again that $\Q$ is necessarily integral to find, for any $f\in\D(\Q)(d,e)$, that $f=(f\sw d)\circ d\leq 1_B\circ d=d$; and similar for $f\leq e$. Conversely, if $f\leq d\wedge e$ then $e\circ(e\se f)=f$ and $(f\sw d)\circ d=f$ both follow directly from the assumption, and show that $f\in\D(\Q)(d,e)$.
\end{proof}
To stress that the definition in \cite{stu13} agrees with Definition \ref{10} above, we record an observation made in the proof above\footnote{This Proposition \ref{11}, and also Proposition \ref{11.1}, help to show that there are very many {\it non-divisible} quantaloids. For instance, if $(M,\circ,1)$ is any monoid, then the free quantale $(\P(M),\circ,\{1\})$ is divisible if and only if $M=\{1\}$. The quantaloid $\Dist(\Q)$ is not divisible in general, even for divisible $\Q$---because it is not integral. The quantale of sup-endomorphisms on a sup-lattice is not divisible in general. And so forth.}:
\begin{proposition}\label{10.x} 
A divisible quantaloid $\Q$ is always integral.
\end{proposition}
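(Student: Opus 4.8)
The plan is to invoke the reformulation of divisibility given by condition \eqref{10.3} in Definition \ref{10} (or, just as conveniently, condition \eqref{10.4}), which is the shape best suited to extracting integrality. Recall that ``$\Q$ is integral'' means precisely that for every object $A$ the identity $1_A$ is the top element of the hom-lattice $\Q(A,A)$. So I would fix an object $A$ and an arbitrary morphism $d\:A\to A$, and it suffices to prove $d\leq 1_A$.

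The one observation that does all the work is that pre- and post-composition with an identity morphism are identity functions, and hence so are their right adjoints: $1_A\searrow e=e$ for every $e\:A\to X$, and $e\swarrow 1_A=e$ for every $e\:X\to A$. Applying this with $e=d$ gives $1_A\circ(1_A\searrow d)=1_A\circ d=d$ and $(d\swarrow 1_A)\circ 1_A=d\circ 1_A=d$. Thus the right-hand side of the equivalence in condition \eqref{10.3}, instantiated at $e=1_A$, holds automatically; by that equivalence we conclude $d\leq 1_A$. Since $d\:A\to A$ was arbitrary, $1_A$ is indeed the top of $\Q(A,A)$, i.e.\ $\Q$ is integral.

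Alternatively, one can avoid mentioning residuations altogether by using condition \eqref{10.4}: every morphism $d\:A\to A$ is a diagonal from $1_A$ to $1_A$ (take $x=y=d$ in Definition \ref{1}\eqref{a}, since $d\circ 1_A=d=1_A\circ d$), so $\Q(A,A)=\D(\Q)(1_A,1_A)=\ \downarrow\!1_A$ by \eqref{10.4}, which is exactly integrality at $A$.

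As for the main obstacle: there genuinely is none of substance here. The entire content lies in selecting the convenient equivalent formulation of divisibility from Definition \ref{10} and in recording the triviality that lifting and extension through an identity morphism are identity maps; once that is in place the conclusion is immediate, which is why this is stated as a short corollary-style proposition rather than a theorem.
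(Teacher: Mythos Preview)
Your proposal is correct and matches the paper's own argument essentially verbatim: the paper records this observation inside the proof of the equivalences in Definition~\ref{10}, by putting $e=1_A$ in condition~\eqref{10.3} and noting $\Q(A,A)=\D(\Q)(1_A,1_A)$, which is exactly your two formulations. There is nothing to add.
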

We also observe:
\begin{proposition}\label{11}
A divisible quantaloid $\Q$ is always locally localic\footnote{This means that each lattice $\Q(A,B)$ of morphisms in $\Q$ with the same domain and the same codomain is a locale. This does {\em not} mean that composition in $\Q$ preserves finite infima in each variable (e.g.\ the empty infimum is hardly ever preserved).}.
\end{proposition}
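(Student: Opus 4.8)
The plan is to show directly that for any two objects $A,B$ the hom-lattice $\Q(A,B)$ satisfies the frame distributive law
$$a\wedge\bigvee_{i\in I}d_i=\bigvee_{i\in I}(a\wedge d_i)\qquad(a,d_i\in\Q(A,B)),$$
which, for a complete lattice, is the same as being a complete Heyting algebra, i.e.\ a locale. The inclusion $\bigvee_i(a\wedge d_i)\leq a\wedge\bigvee_i d_i$ holds in any lattice, so all the work goes into the reverse inequality $a\wedge\bigvee_i d_i\leq\bigvee_i(a\wedge d_i)$.

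The engine is the divisibility identity of Definition~\ref{10}\eqref{10.1}: for any parallel $e,d\:A\to B$ one has $e\circ(e\se d)=d\wedge e$. First I would apply this with $e:=\bigvee_i d_i$, which is again a morphism $A\to B$, to rewrite $a\wedge\bigvee_i d_i=\big(\bigvee_i d_i\big)\circ\big(\big(\bigvee_i d_i\big)\se a\big)$. Then I would invoke the two halves of the quantaloid axiom that composition preserves suprema in each variable: a routine Galois-connection computation gives $\big(\bigvee_i d_i\big)\se a=\bigwedge_i(d_i\se a)$ (precomposition with $\bigvee_i d_i$ sends joins to joins, hence its right adjoint sends the join in the denominator to a meet), and $\big(\bigvee_i d_i\big)\circ(-)=\bigvee_i\big(d_i\circ(-)\big)$. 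Combining these,
$$a\wedge\bigvee_i d_i=\bigvee_i\Big(d_i\circ\bigwedge_{j}(d_j\se a)\Big)\leq\bigvee_i\big(d_i\circ(d_i\se a)\big)=\bigvee_i(a\wedge d_i),$$
where the middle step is monotonicity of composition applied to $\bigwedge_j(d_j\se a)\leq d_i\se a$, and the last equality is Definition~\ref{10}\eqref{10.1} applied to the pair $d_i,a$. This finishes the proof; one could equally run the same argument through the dual identity $(d\sw e)\circ e=d\wedge e$, consistent with the self-duality of divisibility recorded in Definition~\ref{10}.

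I do not expect a genuine obstacle here, but the step that must be handled with care is the interaction of the supremum with the two ``halves'' of the divisibility identity: the residuation $\big(\bigvee_i d_i\big)\se a$ does \emph{not} distribute over $\bigvee_i d_i$ — on the contrary it collapses it to the meet $\bigwedge_i(d_i\se a)$ — so it is essential to distribute the join through the \emph{outer} composition (which does preserve suprema) and only afterwards estimate termwise by monotonicity, the ``loss'' in passing from $\bigwedge_j(d_j\se a)$ to $d_i\se a$ being exactly compensated. No further hypotheses are used: neither integrality (Proposition~\ref{10.x}), nor commutativity, nor idempotency enters the argument — divisibility alone, in the form of Definition~\ref{10}\eqref{10.1}, suffices.
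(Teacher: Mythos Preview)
Your proof is correct and is essentially identical to the paper's own argument: the paper writes the same chain $(\bigvee_if_i)\wedge g=(\bigvee_if_i)\circ((\bigvee_jf_j)\se g)=\bigvee_i(f_i\circ\bigwedge_j(f_j\se g))\leq\bigvee_i(f_i\circ(f_i\se g))=\bigvee_i(f_i\wedge g)$, only with the variable names $f_i,g$ in place of your $d_i,a$.
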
 
\begin{proof}
For $(f_i)_{i\in I},g$ be in $\Q(A,B)$ we certainly have $\bigvee_i(f_i\wedge g)\leq(\bigvee_if_i)\wedge g$. Assuming $\Q$ to be divisible, we can furthermore compute that
$$(\bigvee_if_i)\wedge g=(\bigvee_if_i)\circ\Big((\bigvee_jf_j)\se g\Big)=\bigvee_i\Big(f_i\circ\Big(\bigwedge_j(f_j\se g)\Big)\Big)\leq\bigvee_i\Big(f_i\circ(f_i\se g)\Big)=\bigvee_i(f_i\wedge g)$$
which leads to the conclusion: $\bigvee_i(f_i\wedge g)=(\bigvee_if_i)\wedge g$.
\end{proof}
The very definition of divisibility already shows a link with the diagonal construction; the next proposition adds to that:
\begin{proposition}\label{11.1}
A quantaloid $\Q$ is divisible if and only if $\D(\Q)$ is divisible.
\end{proposition}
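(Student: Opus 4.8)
The plan is to establish both implications by verifying condition~(\ref{10.3.1}) of Definition~\ref{10}, which is the formulation of divisibility that passes most smoothly through the diagonal construction: it mentions only hom-lattices, their order, and composition, and never the lifting/extension operations or binary meets.

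For ``$\D(\Q)$ divisible $\Rightarrow$ $\Q$ divisible'' I would first unwind the full and faithful inclusion homomorphism $I\:\Q\to\D(\Q)$ of~(\ref{7.1}). Every morphism $\delta\:A\to B$ of $\Q$ is trivially a diagonal from $1_A$ to $1_B$ (lifting and extension through an identity do nothing), so $\D(\Q)(1_A,1_B)=\Q(A,B)$ with the same order; likewise the composite of diagonals $1_A\to 1_B\to 1_C$ is the $\Q$-composite, and the identity on $1_A$ is $1_A$ itself. Thus $I$ identifies $\Q$ with the full subquantaloid of $\D(\Q)$ on the objects $\{1_A\mid A\in\Q_0\}$. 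Since condition~(\ref{10.3.1}) refers only to hom-lattices, their order, and composition --- data which agree for $\Q$ and for this full subquantaloid --- its validity for $\D(\Q)$ restricts verbatim to the objects $1_A$ and gives~(\ref{10.3.1}) for $\Q$. (Concretely: from $d\le e$ in $\Q(A,B)$, regarded as diagonals $1_A\to 1_B$, divisibility of $\D(\Q)$ yields diagonals $x\:1_A\to 1_A$ and $y\:1_B\to 1_B$ --- i.e.\ $\Q$-arrows $x\:A\to A$ and $y\:B\to B$ --- with $e\circ x=d=y\circ e$; and the converse half of~(\ref{10.3.1}) transfers back unchanged.)

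For ``$\Q$ divisible $\Rightarrow$ $\D(\Q)$ divisible'' I would check~(\ref{10.3.1}) for $\D(\Q)$ head-on. Fix objects $f\:A\to B$ and $g\:C\to D$ of $\D(\Q)$ and parallel diagonals $d,e\:f\to g$. The crucial input is condition~(\ref{10.4}), available because $\Q$ is divisible: it gives $\D(\Q)(f,f)={\downarrow}f$ and $\D(\Q)(g,g)={\downarrow}g$, so the identities $1_f=f$ and $1_g=g$ are the top elements, and a $\Q$-arrow is a diagonal $f\to f$ (resp.\ $g\to g$) precisely when it lies below $f$ (resp.\ $g$). The ``if'' half of~(\ref{10.3.1}) is then immediate: a diagonal $x\:f\to f$ satisfies $x\le 1_f$, so $d=e\circ_f x\le e\circ_f 1_f=e$. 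For the ``only if'' half, suppose $d\le e$ in $\Q(A,D)$. Divisibility of $\Q$, together with its integrality (Proposition~\ref{10.x}), produces $u\:A\to A$ with $u\le 1_A$ and $v\:D\to D$ with $v\le 1_D$ such that $e\circ u=d=v\circ e$ in $\Q$. I then take $x:=f\circ u$ and $y:=v\circ g$; these lie below $f$, resp.\ $g$, hence are diagonals $f\to f$, resp.\ $g\to g$, by~(\ref{10.4}). Unwinding the composition formula of Proposition~\ref{1.1} and simplifying with the diagonal identities $g\circ(g\se e)=e=(e\sw f)\circ f$, one obtains $e\circ_f x=(e\sw f)\circ f\circ u=e\circ u=d$ at once; and, using in addition the counit inequality $(y\sw g)\circ g\le y$ and the unit inequality $v\le y\sw g$ for the adjunction $(-\circ g)\dashv(-\sw g)$, one gets $y\circ_g e=(y\sw g)\circ e$, which lies above $v\circ e=d$ (as $v\le y\sw g$) and below $y\circ(g\se e)=v\circ e=d$ (as $(y\sw g)\circ g\le y$ and $e=g\circ(g\se e)$), hence equals $d$. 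These $x$ and $y$ are precisely the diagonals required by~(\ref{10.3.1}).

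The one step where I expect some genuine care is the evaluation of $y\circ_g e$: one cannot simply collapse $y\sw g=(v\circ g)\sw g$ to $v$ (it may be strictly larger), so the equality $y\circ_g e=d$ must come from the two-sided pinch just described. Everything else is bookkeeping --- chiefly keeping the domains and codomains of the various liftings, extensions and $\D(\Q)$-composites straight --- and once condition~(\ref{10.4}) has pinned down the hom-lattices $\D(\Q)(f,f)$ and $\D(\Q)(g,g)$ that bookkeeping is routine.
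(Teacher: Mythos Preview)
Your proof is correct and follows essentially the same route as the paper's: identify $\Q$ with a full subquantaloid of $\D(\Q)$ for one direction, and for the other, use divisibility of $\Q$ to produce $u,v$ with $e\circ u=d=v\circ e$, then set $x=f\circ u$ and $y=v\circ g$ and verify these are diagonals witnessing divisibility in $\D(\Q)$. The only difference is cosmetic: your two-sided pinch for $y\circ_g e$ is unnecessary, since by Remark~\ref{1.x} (which you are already tacitly using for $e\circ_f x$) you may compute the composite with the factorisation $v\circ g=y$ directly, giving $y\circ_g e=v\circ g\circ(g\se e)=v\circ e=d$ in one line --- this is exactly how the paper dispatches both composites.
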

\begin{proof}
As we may regard $\Q$ as a full subquantaloid of $\D(\Q)$, if the latter is divisible then so must be the former.

Now suppose that $\Q$ is divisible. By Condition $(\ref{10.4})$ in Definition \ref{10} we find that $\D(\Q)$ is integral, so one implication in Condition $(\ref{10.3.1})$ is trivial for $\D(\Q)$. For the other implication, consider two diagonals
$$\xymatrix@=8ex{
X_0\ar[d]_f\ar@<1ex>[dr]^d\ar@<-1ex>[dr]_{d'} & Y_0\ar[d]^g \\
X_1 & Y_1}$$
such that $d\leq d'$ in $\D(\Q)(f,g)$. Because we necessarily have $d\leq d'$ in $\Q(X_0,Y_1)$ too, it follows from the assumptions on $\Q$ that there exist $x\:X_0\to X_0$ and $y\:Y_1\to Y_1$ such that $y\circ d'=d=d'\circ x$ in $\Q$. Furthermore, $f\circ x\leq f$ in $\Q(X_0,X_1)$ (because $\Q$ is integral), so that there exists an $x'\:X_1\to X_1$ such that $\xi:=f\circ x=x'\circ f$; and for similar reasons there is an $y'\:Y_0\to Y_0$ such that $\eta:=y\circ g=g\circ y'$. Displaying all these morphisms in the diagram
$$\xymatrix@=8ex{
X_0\ar[d]_f\ar@{.>}[r]^x\ar[dr]|{\xi} & X_0\ar[d]_f\ar@<1ex>[dr]^d\ar@<-1ex>[dr]_{d'} & Y_0\ar[dr]|{\eta}\ar[d]^g\ar@{.>}[r]^{y'} & Y_0\ar[d]^g \\
X_1\ar@{.>}[r]_{x'} & X_1 & Y_1\ar@{.>}[r]_y & Y_1}$$
shows that $\xi$ and $\eta$ are diagonals too, and from Remark \ref{1.x} it is clear that $d'\circ_f\xi=d=\eta\circ_gd'$. So we showed that $d'$ divides $d$ in $\D(\Q)$, as required.
\end{proof}
An important class of examples is provided by:
\begin{example}\label{12}
Any locale $H$ is (commutative and) divisible: for any $a,b\in H$ we have $a\wedge(a\Rightarrow b)\leq b$ by the universal property of the ``implication'', and $a\wedge(a\Rightarrow b)\leq a$ holds trivially, so we already find $a\wedge(a\Rightarrow b)\leq a\wedge b$; and conversely, $b\leq (a\Rightarrow b)$ holds by its equivalence to the trivial $a\wedge b\leq b$, and therefore also $a\wedge b\leq a\wedge(a\Rightarrow b)$ holds. Via the argument in Example \ref{7.2} and the above Proposition \ref{11.1} it follows that also the quantaloid $\D(H)$ is symmetric and divisible\footnote{Note that -- because every element of $H$ is idempotent -- the quantaloid $\D(H)$ is exactly the universal {\it splitting-of-idempotents} in $H$; that is to say, any homomorphism $H\to\R$ into a quantaloid in which all idempotents split, extends essentially uniquely to a homomorphism $\D(H)\to\R$.}. Both $H$ and $\D(H)$ are Cauchy-bilateral (for the identity involution), see \cite[Example 4.5]{heystu11}. In particular is this all true for $H=([0,\infty]\op,\vee,0)$.
\end{example}
We hasten to point out our {\it other} main example:
\begin{example}\label{13}
The Lawvere quantale $R=([0,\infty]\op,+,0)$ is (commutative and) divisible. The ``implication'' in this  quantale is given by $a\sq b=0 \vee (b-a)$ (truncated substraction), and so it is easily seen that $a+(a\sq b)=a\vee b$, as required. It thus follows that its quantaloid of diagonals $\D(R)$ is symmetric and divisible. In \cite[Example 4.4]{heystu11} it is shown that $R$ is Cauchy-bilateral; we shall now prove
the stronger fact that also $\D(R)$ is Cauchy-bilateral. 

Because the Lawvere quantale is divisible, we know by Condition $(\ref{10.4})$ in Definition \ref{10} that its quantaloid of diagonals is integral. Therefore, as explained in \cite[Definition 4.2]{heystu11}, the latter is Cauchy-bilateral if and only if the following holds\footnote{This expresses exactly that, whenever $f_i\:a\to b_i$ and $g_i\:b_i\to a$ are diagonals so that the supremum of the composites $g_i\circ_{b_i}f_i$ is bigger than the identity diagonal on $a$, then so is the supremum of the composites of $(g_i\vee f_i\o)\circ_{b_i}(g_i\o\vee f_i)$; but the involution on $\D(Q)$ is the identity -- stemming from $Q$'s commutativity -- and composition is computed as $g\circ_b f=g+f-b$.}: for any index-set $I$ and elements $a,b_i,f_i,g_i\in[0,\infty]$,
$$\mbox{ if }f_i\wedge g_i\geq a\vee b_i\mbox{ and }a\geq\bigwedge_i(g_i+f_i-b_i)\mbox{ then }a\geq\bigwedge_i(2(f_i\vee g_i)-b_i).$$
Because $R$ is linearly ordered and the formulas are symmetric in $f_i$'s and $g_i$'s, we may suppose that $g_i\geq f_i\geq b_i$ for all $i\in I$. Under this harmless extra assumption we can compute that
$$\bigwedge_i g_i\geq\bigwedge_i f_i\geq a\geq \bigwedge_i(g_i+f_i-b_i)\geq\bigwedge_i g_i$$
and furthermore
$$\bigwedge_i(2(f_i\vee g_i)-b_i)=\bigwedge_i(2g_i-b_i)\geq\bigwedge_ig_i\geq\bigwedge_if_i.$$
It is thus sufficient to prove that: for any index-set $I$ and elements $b_i,f_i,g_i\in[0,\infty]$,
$$\mbox{ if }g_i\geq f_i\geq b_i\mbox{ and }\bigwedge_i f_i\geq\bigwedge_i(g_i+f_i-b_i)\mbox{ then }\bigwedge_if_i\geq\bigwedge_i(2g_i-b_i).$$
But from $\bigwedge_i(g_i+f_i-b_i)\leq\bigwedge_if_i$ we know that for any $\varepsilon>0$ there exists $k\in I$ such that for any $j\in I$: $g_k+f_k-b_k<f_j+\varepsilon$; and upon putting $j=k$ it follows that $g_k-b_k<\varepsilon$. Secondly, since $\bigwedge_i g_i\leq\bigwedge_i f_i$ we find for any $\varepsilon>0$ some  $i\in I$ such that for any $j\in I$, $g_i<f_j+\varepsilon$. Summing up, for any $\varepsilon>0$ there is an $i\in I$ such that for any $j\in I$, $2g_i-b_i<f_j+2\varepsilon$; and this means exactly that $\bigwedge_i(2g_i-b_i)\leq\bigwedge_if_i$, as wanted.
\end{example}
The close relationship between the two previous examples, $([0,\infty]\op,\vee,0)$ and $([0,\infty]\op,+,0)$, can be traced back to divisibility, as follows.

Let $Q=(Q,\circ,1)$ be any divisible quantale, and write $Q_H=(Q,\wedge,1)$ for the underlying locale; because $Q$ is integral it follows that the identity function is a lax morphism from $Q_H$ to $Q$. We must distinguish between the quantaloid $\D(Q)$ of diagonals in $Q$ and the quantaloid $\D(Q_H)$ of diagonals in $Q_H$. However, both these divisible quantaloids have the same objects, and -- as spelled out above -- for fixed $f,g\in Q$ we also find that
$$\D(Q)(f,g)=\ \downarrow\!(f\wedge g)=\D(Q_H)(f,g).$$
Furthermore, the identity on an object $f\in Q$, in both $\D(Q)$ and $\D(Q_H)$, is the greatest element of $\D(Q)=\D(Q_H)$, {\it viz.}\ $f$ itself. So the only (but crucial) difference between both these quantaloids, is the composition law:
\begin{itemize}
\item the composite of $d\:f\to g$ and $e\:g\to h$ in $\D(Q)$ is $e\circ_g d=(e\sw g)\circ g\circ(g\se d)$,
\item the composite of $d\:f\to g$ and $e\:g\to h$ in $\D(Q_H)$ is $e\wedge d$.
\end{itemize}
However, these expressions compare: because $e\circ_g d=e\circ(g\se d)\leq e$, and similarly $e\circ_g d\leq d$, so $e\circ_g d\leq e\wedge d$. In other words, the identity function on objects and arrows defines a normal lax morphism from $\D(Q_H)$ to $\D(Q)$. Furthermore, the lax morphisms $Q_H\to Q$ and $\D(Q_H)\to\D(Q)$ commute with the full embeddings $Q\to\D(Q)$ and $Q_H\to\D(Q_H)$ to make the following square commute:
\begin{equation}\label{14}
\begin{array}{c}
$$\xymatrix@!=8ex{
Q_H\ar[r]\ar[d] & Q\ar[d] \\
\D(Q_H)\ar[r] & \D(Q)}
\end{array}
\end{equation}

When applying the above constructions to $R=([0,\infty]\op,+,0)$, we already know that categories enriched in $R$ are generalised metric spaces (Example \ref{d}); we also know that categories enriched in $R_H$ are generalised ultrametric spaces and that the change of base induced by the lax morphism from $R_H$ to $R$ encodes precisely the inclusion of ultrametrics into metrics (Example \ref{s}). In the next section we study the two {\it other} bases of enrichement made available in Diagram (\ref{14}).

\subsection{Partial categories, partial metrics}\label{yz}

When $\Q$ is a small quantaloid, then so is $\D(\Q)$; hence the theory of enriched categories applies to $\D(\Q)$ as much as it does to $\Q$. The full embedding $I\:\Q\to\D(\Q)$ induces a change of base $\Cat(\Q)\to\Cat(\D(\Q))$
which shows how $\Q$-categories fit into $\D(\Q)$-categories. For the sake of exposition, we introduce the following terminology and notation:
\begin{definition}\label{20}
A {\bf partial $\Q$-enriched category (functor, distributor)} is a $\D(\Q)$-enrich\-ed category (functor, distributor). We write $\Par\Cat(\Q):=\Cat(\D(\Q))$ and $\Par\Dist(\Q):=\Dist(\D(\Q))$.
\end{definition}
Explicitly, a partial $\Q$-category $\bbC$ consists of
\begin{itemize}\setlength{\itemindent}{3em}
\item[{\tt (obj)}] a set $\bbC_0$,
\item[{\tt (typ)}] a function $t\:\bbC_0\to\Q_1$,
\item[{\tt (hom)}] a function $\bbC\:\bbC_0\times\bbC_0\to\Q_1$,
\end{itemize}
such that, in the quantaloid $\Q$, we have that
\begin{itemize}\setlength{\itemindent}{3em}
\item[{\tt (PC0)}] $\bbC(y,x)$ is a diagonal from $tx$ to $ty$: $(\bbC(y,x)\sw tx)\circ tx=\bbC(y,x)=ty\circ(ty\se\bbC(y,x))$,
\item[{\tt (PC1)}] $tx\leq\bbC(x,x)$,
\item[{\tt (PC2)}]\hspace{-1.5ex}\footnote{Or any of the equivalent expressions obtained by replacing the left hand side, thanks to {\tt (PC0)}, with either $(\bbC(z,y)\swarrow ty)\circ\bbC(y,x)$ or $\bbC(z,y)\circ(ty\searrow\bbC(y,x)$.} $(\bbC(z,y)\swarrow ty)\circ ty\circ(ty\searrow\bbC(y,x))\leq\bbC(z,x).$
\end{itemize}
Similarly one can express the notions of $\D(\Q)$-enriched functor and distributor to avoid explicit references to the diagonal construction, and speak of `partial $\Q$-functor' and `partial $\Q$-distributor' between partial $\Q$-categories.

Upon identification of $\Q$ with its image in $\D(\Q)$ along the full embedding $I\:\Q\to\D(\Q)$, it is clear that (``total'') $\Q$-categories (and functors between them) are exactly the same thing as a partial $\Q$-categories for which all object-types are identity morphisms (and partial functors between them). Indeed, the change of base $\Cat(\Q)\to\Par\Cat(\Q)$ induced by the full embedding $I\:\Q\to\D(\Q)$ is precisely the full inclusion of $\Q$-categories (and functors) into partial $\Q$-categories (and partial functors). 

As a converse to the inclusion of $\Q$ into $\D(\Q)$, we can observe that any diagonal $d\:f\to g$ can be ``projected'' onto its ``domain'' and onto its ``codomain'':
\begin{proposition}\label{7}
For any quantaloid $\Q$, both
$$J_0\Big(d\:f\to g\Big)=\Big(g\se d\:\dom(f)\to\dom(g)\Big)$$
$$J_1\Big(d\:f\to g\Big)=\Big(d\sw f\:\cod(f)\to\cod(g)\Big)$$ 
are lax morphisms. The induced change of base functors $J_0,J_1\:\Par\Cat(\Q)\to\Cat(\Q)$, send a partial $\Q$-category $\bbC$ to:
\begin{itemize}
\item the $\Q$-category $J_0\bbC$ with object set $\bbC_0$, type function $\bbC_0\to\Q_0\:x\mapsto\dom(tx)$, and hom function $\bbC_0\times\bbC_0\to\Q_1\:(y,x)\mapsto ty\se\bbC(y,x)$,
\item the $\Q$-category $J_1\bbC$ with object set $\bbC_0$, type function $\bbC_0\to\Q_0\:x\mapsto\cod(tx)$, and hom function $\bbC_0\times\bbC_0\to\Q_1\:(y,x)\mapsto\bbC(y,x)\sw tx$.
\end{itemize}
\end{proposition}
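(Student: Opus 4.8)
The plan is to split the proof in two. The existence of the change-of-base functors $J_0,J_1\:\Par\Cat(\Q)\to\Cat(\Q)$, together with the stated formulas for them, will follow automatically from the general change-of-base machinery recalled above, once we know that $J_0$ and $J_1$ are lax functors $\D(\Q)\to\Q$; so the real content is to verify the three lax-functor axioms. Recall that an object of $\D(\Q)$ is a $\Q$-morphism $f$, that a $\D(\Q)$-morphism $d\colon f\to g$ is a diagonal in the sense of Definition~\ref{1} (so $g\circ(g\se d)=d=(d\sw f)\circ f$), that the identity on $f$ is $f$ itself, and that the composite of $d\colon f\to g$ with $e\colon g\to h$ is $e\circ_g d=(e\sw g)\circ g\circ(g\se d)$.

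For $J_0$, monotonicity on hom-lattices is clear, since $g\se-$ is a right adjoint and the order on $\D(\Q)(f,g)$ is inherited from $\Q(\dom f,\cod g)$; and the lax unit law $1_{\dom f}\leq J_0(1_f)=f\se f$ is the adjoint transpose of the trivial $f\circ 1_{\dom f}=f\leq f$. The only axiom that calls for a computation is laxity of composition: for $d\colon f\to g$ and $e\colon g\to h$ we must show
$$(h\se e)\circ(g\se d)\ \leq\ h\se\big(e\circ_g d\big)\ =\ h\se\big((e\sw g)\circ g\circ(g\se d)\big).$$
By the extension adjunction $h\circ-\dashv h\se-$ this reduces to $h\circ(h\se e)\circ(g\se d)\leq(e\sw g)\circ g\circ(g\se d)$, and here the two defining equalities of the diagonal $e$ from Definition~\ref{1}, namely $h\circ(h\se e)=e=(e\sw g)\circ g$, present the left-hand side as exactly $(e\sw g)\circ g\circ(g\se d)$. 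Hence $J_0$ is a lax functor (and a normal one as soon as $\Q$ is integral, since then $f\se f=1_{\dom f}$).

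The verification for $J_1$ is the left--right mirror image: $-\sw f$ is a right adjoint, $1_{\cod f}\leq f\sw f=J_1(1_f)$ transposes $1_{\cod f}\circ f\leq f$, and for composition the lifting adjunction $-\circ f\dashv-\sw f$ turns $(e\sw g)\circ(d\sw f)\leq(e\circ_g d)\sw f$ into $(e\sw g)\circ(d\sw f)\circ f\leq e\circ_g d$, which holds because $(d\sw f)\circ f=d=g\circ(g\se d)$ by the diagonal identities for $d$. With $J_0$ and $J_1$ now lax functors, the change-of-base construction recalled above produces the 2-functors $J_0,J_1\:\Par\Cat(\Q)=\Cat(\D(\Q))\to\Cat(\Q)$; unwinding the rule $F\bbC(y,x)=F(\bbC(y,x))$ for $F=J_0$ resp.\ $F=J_1$, and using that $J_0$ and $J_1$ act on the objects of $\D(\Q)$ by taking domain resp.\ codomain, gives object set $\bbC_0$, type function $x\mapsto\dom(tx)$ resp.\ $x\mapsto\cod(tx)$, and hom $(y,x)\mapsto J_0(\bbC(y,x))=ty\se\bbC(y,x)$ resp.\ $(y,x)\mapsto J_1(\bbC(y,x))=\bbC(y,x)\sw tx$ --- with only a routine check that these homs carry the source and target demanded by {\tt [C0]}. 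I expect the main obstacle to be concentrated entirely in the laxity-of-composition step: one has to pass through the correct right adjunction (extension for $J_0$, lifting for $J_1$) and then invoke the two triangle-type identities built into the notion of diagonal, resisting the temptation to expect strict functoriality --- $J_0$ and $J_1$ genuinely are only lax, as $h\se-$ and $-\sw f$ do not preserve the relevant composites on the nose.
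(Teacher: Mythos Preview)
Your proof is correct and follows essentially the same approach as the paper: verify the three lax-functor axioms, with the key step being laxity of composition, which you handle (as does the paper) by passing through the adjunction $h\circ-\dashv h\se-$ and invoking the diagonal identities $h\circ(h\se e)=e=(e\sw g)\circ g$ to obtain $h\circ(h\se e)\circ(g\se d)=e\circ_g d$. The only differences are cosmetic: the paper is more terse (it states $h\circ(h\se e)\circ(g\se d)=e\circ_g d$ directly and leaves $J_1$ as ``entirely dual''), whereas you spell out both cases and the change-of-base description.
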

\begin{proof}
For morphisms $f,g$ in $\Q$, the map $\D(\Q)(f,g)\to\Q(\dom(f),\dom(g))\:d\mapsto g\se d$ preserves order. Given diagonals $d\:f\to g$ and $e\:g\to h$, we know that $h\circ (h\se e)\circ(g\se d)=e\circ_g d$, from which it follows by lifting through $h$ that $(h\se e)\circ(g\se d)\leq h\se (e\circ_g d)$, or in other words, $J_0(e)\circ J_0(d)\leq J_0(e\circ_g d)$.
Finally, for any morphism $f$ in $\Q$ we have that $1_{\dom(f)}\leq (f\se f)=J_0(1_f)$. The proof for $J_1$ is entirely dual.
\end{proof}
In a somewhat different context \cite{taolaizha12}, these change of base functors have been called the {\it forward} and {\it backward globalisation} of a partial $\Q$-category. It can be remarked that, since $J_0\:\D(\Q)\to\Q$ is a left inverse to $I\:\Q\to\D(\Q)$ (that is, $J_0\circ I$ is the identity on $\Q$), the same is true for the induced functors $J_0\:\Par\Cat(\Q)\to\Cat(\Q)$ and $I\:\Cat(\Q)\to\Par\Cat(\Q)$ (and similar for $J_1$).

Even though it could be an interesting topic to compare partial $\Q$-categories with ``total'' $\Q$-categories for a general base quantaloid $\Q$, we shall narrow our study down to a more specific situation: in the rest of this section we shall be concerned only with {\it commutative and divisible quantales}---in keeping with our main example, the Lawvere quantale $R=([0,\infty]\op,+,0)$. 

Let us first note that, whenever $Q=(Q,\circ,1)$ is a commutative quantale, the function $Q\times Q\to Q\:(f,g)\mapsto f\circ g$ is a homomorphism of quantales, so that composition with the lax morphism $(J_0,J_1)\:\D(Q)\to Q\times Q$ (whose components $J_0$ and $J_1$ are those of Proposition \ref{7}) produces yet another lax morphism from $\D(\Q)$ to $\Q$:
\begin{proposition}\label{9}
If $Q$ is a commutative quantale\footnote{The commutativity of the multiplication implies, by uniqueness of adjoints, that liftings and extensions are the same thing; so in this case we shall write $x\sq y$ instead of $x\se y=y\sw x$. We reserve the notation $x\Rightarrow y$ for the case where the multiplication is given by binary infimum, i.e.\ when the quantale considered is actually a locale.}, then 
$$K\:\D(Q)\to Q\:(d\:f\to g)\mapsto((g\sq d)\circ(f\sq d))$$ 
is a lax morphism. The induced change of base $K\:\Par\Cat(Q)\to\Cat(Q)$ sends a partial $Q$-category $\bbC$ to the symmetric $Q$-category $K\bbC$ with object set $\bbC_0$ and hom function $\bbC_0\times\bbC_0\to Q\:(y,x)\mapsto(ty\sq\bbC(y,x))\circ(tx\sq\bbC(y,x))$.
\end{proposition}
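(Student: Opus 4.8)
The plan is to verify that $K$ is a lax functor by factoring it through known lax functors, and then to unwind the resulting change-of-base construction. First I would observe that Proposition \ref{7} already gives us the lax morphism $(J_0,J_1)\colon\D(Q)\to Q\times Q$ sending a diagonal $d\colon f\to g$ to the pair $(g\sq d,\, f\sq d)$, where the product quantaloid $Q\times Q$ is formed componentwise and is again a (commutative) quantale-like quantaloid. Here I am using the commutativity of $Q$ to identify $g\se d = d\sw g = g\sq d$ and likewise for $f$; note that $J_0$ lands in $Q(\dom f,\dom g)$ and $J_1$ in $Q(\cod f,\cod g)$, but since $Q$ has a single object both just land in $Q$. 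Then I would invoke the fact, stated just before the proposition, that multiplication $\mu\colon Q\times Q\to Q\colon (a,b)\mapsto a\circ b$ is a homomorphism of quantales (this uses commutativity: $\mu(a,b)\circ\mu(a',b') = a\circ b\circ a'\circ b' = (a\circ a')\circ(b\circ b') = \mu((a,b)\circ(a',b'))$, and $\mu$ preserves suprema and the unit $(1,1)$ since composition distributes over suprema in each variable). Composing a lax morphism with a genuine homomorphism yields a lax morphism, so $K = \mu\circ(J_0,J_1)$ is lax; explicitly $K(d\colon f\to g) = (g\sq d)\circ(f\sq d)$, as claimed.

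Next I would check that the formula for the change of base is as stated. By the general recipe (Subsection on change of base), the induced functor $K\colon\Par\Cat(Q)=\Cat(\D(Q))\to\Cat(Q)$ sends a $\D(Q)$-category $\bbC$ to the $Q$-category with the same objects and hom-arrows $K\bbC(y,x) = K(\bbC(y,x))$. Since $\bbC(y,x)$ is a diagonal from $tx$ to $ty$ (condition {\tt (PC0)}), applying $K$ gives $K\bbC(y,x) = (ty\sq\bbC(y,x))\circ(tx\sq\bbC(y,x))$, which matches the statement. The types collapse because $Q$ has a single object, so no type predicate survives.

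Finally I would verify symmetry of $K\bbC$, i.e.\ $K\bbC(y,x) = K\bbC(x,y)$ for all $x,y$. This is immediate from commutativity of $Q$ once one notes that $K$ itself is visibly symmetric in the two "endpoints": $K(d\colon f\to g) = (g\sq d)\circ(f\sq d) = (f\sq d)\circ(g\sq d)$, so the expression is unchanged under swapping the role of domain and codomain of the diagonal. Concretely, $K\bbC(x,y) = (tx\sq\bbC(x,y))\circ(ty\sq\bbC(x,y))$; but we must relate $\bbC(x,y)$ to $\bbC(y,x)$, and a priori there is no such relation since $\bbC$ need not be symmetric. The resolution is that $K$ does not see $\bbC(y,x)$ and $\bbC(x,y)$ separately in a way that would break symmetry: rather, symmetry of $K\bbC$ should be derived from the fact that $K$ factors through the symmetric quantale $Q$ via a lax morphism $\D(Q)\to Q$ for which $K(d) = K(d\o)$ where $d\o$ is the "transpose" diagonal from $g$ to $f$ — and here one uses Example \ref{7.2}, that $\D(Q)$ is symmetric when $Q$ is commutative, together with the observation that the symmetric structure on $\D(Q)$ has $\bbC(y,x)\o$ living in $\D(Q)(ty,tx)$ and that $K$ sends a diagonal and its transpose to the same element. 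I expect this last point — cleanly identifying why $K$ respects the involution and hence why $K\bbC$ is symmetric rather than merely a $Q$-category — to be the main obstacle, though it should reduce to the routine computation that $K(d) = (g\sq d)\circ(f\sq d)$ is literally the same element of $Q$ as $K$ applied to the transpose, so that $K\bbC(y,x)$ and $K\bbC(x,y)$ are computed from transpose diagonals and therefore coincide.
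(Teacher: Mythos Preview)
Your approach to the laxness of $K$ is exactly the paper's: the paragraph immediately preceding the proposition already states that $K$ is obtained as the composite of the lax morphism $(J_0,J_1)\:\D(Q)\to Q\times Q$ from Proposition~\ref{7} with the quantale homomorphism $Q\times Q\to Q\:(a,b)\mapsto a\circ b$ (which is a homomorphism precisely by commutativity). Your unwinding of the change-of-base formula is also correct and routine.

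Where you struggle is with the symmetry claim, and your hesitation is well-founded: the argument you sketch does not go through, and in fact the claim as stated in the paper is not correct in general. Your observation that $K(d)=K(d\o)$ for the canonical involution on $\D(Q)$ is fine, but it only yields $K\bbC(y,x)=K(\bbC(y,x)\o)$; to conclude $K\bbC(y,x)=K\bbC(x,y)$ you would need $\bbC(y,x)\o=\bbC(x,y)$, i.e.\ that $\bbC$ itself is a symmetric $\D(Q)$-category. For a counterexample, take any non-symmetric generalised metric space $(X,d)$, viewed as a partial $R$-category via the embedding $I\:R\to\D(R)$ (so all self-distances are $0$); then $K\bbC(y,x)=2d(y,x)$, which is symmetric only if $d$ is. The paper gives no argument for symmetry either---the preamble only establishes laxness---so you should regard the word ``symmetric'' in the statement as an overstatement (it holds when $\bbC$ is symmetric, and in the later applications to partial metrics the symmetric case is often the one of interest).
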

Unlike $J_0$ and $J_1$, the lax morphism $K$ is {\it not} a left (or right) inverse to $I\:Q\to\D(Q)$.

Secondly, let us narrow down the definition of partial $\Q$-category \cite{stu13}:
\begin{proposition}\label{15}
If $Q=(Q,\bigvee,\circ,1)$ is a divisible quantale, then a partial $Q$-category $\bbC$ is determined by a set $\bbC_0$ together with a function $\bbC\:\bbC_0\times\bbC_0\to Q\:(y,x)\mapsto\bbC(y,x)$ satisfying
$$\bbC(y,x)\leq\bbC(x,x)\wedge\bbC(y,y)\mbox{ and }(\bbC(z,y)\swarrow\bbC(y,y))\circ\bbC(y,x)\leq\bbC(z,x).$$
A partial functor $F\:\bbC\to\bbD$ between partial $Q$-categories is a function $F\:\bbC_0\to\bbD_0$ satisfying
$$\bbC(x,x)=\bbD(Fx,Fx)\mbox{ and }\bbC(y,x)\leq\bbD(Fy,Fx).$$
And a partial distributor $\Phi\:\bbC\dist\bbD$ is a function $\Phi\:\bbD_0\times\bbC_0\to Q$ satisfying
$$\Phi(y,x)\leq\bbC(x,x)\wedge\bbD(y,y),\quad (\bbD(y',y)\swarrow\bbD(y,y))\circ\Phi(y,x)\leq\Phi(y',x)$$
$$\mbox{ and }\quad
\Phi(y,x)\circ(\bbC(x,x)\searrow\bbC(x,x'))\leq\Phi(y,x').$$
\end{proposition}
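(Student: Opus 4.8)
The plan is to exploit that, for a divisible quantale $Q$, the diagonal construction $\D(Q)$ becomes completely explicit. First I would record two consequences of divisibility from Definition~\ref{10}: by condition \eqref{10.5}, for all $f,g\in Q$ the hom-lattice of $\D(Q)$ is $\D(Q)(f,g)=\ \downarrow\!(f\wedge g)$ (a sublattice of $Q$), with identity on $f$ equal to $f$ itself and with composition $e\circ_g d=(e\sw g)\circ g\circ(g\se d)$; and by condition \eqref{10.1}, $g\circ(g\se d)=d\wedge g$ and $(d\sw g)\circ g=d\wedge g$, so that both composites collapse to $d$ whenever $d\le g$. Everything else is an unwinding of the definitions of $\D(Q)$-category, $\D(Q)$-functor and $\D(Q)$-distributor against these two facts.

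\emph{Categories.} The key first observation is that the type function of a $\D(Q)$-category is forced. If $(\bbC_0,t,\bbC)$ satisfies the axioms (spelled out as {\tt (PC0)}--{\tt (PC2)}), then {\tt (PC0)} puts $\bbC(x,x)$ in $\D(Q)(tx,tx)=\ \downarrow\! tx$, hence $\bbC(x,x)\le tx$, while {\tt (PC1)} gives $tx\le\bbC(x,x)$; so $tx=\bbC(x,x)$. Conversely, from a bare function $\bbC\:\bbC_0\times\bbC_0\to Q$ I would put $tx:=\bbC(x,x)$; then {\tt (PC0)}, i.e.\ the requirement that $\bbC(y,x)\in\D(Q)(\bbC(x,x),\bbC(y,y))$, says precisely $\bbC(y,x)\le\bbC(x,x)\wedge\bbC(y,y)$, and in {\tt (PC2)}, since $\bbC(y,x)\le ty=\bbC(y,y)$ the collapsing identity turns $ty\circ(ty\se\bbC(y,x))$ into $\bbC(y,x)$, so the left-hand side of {\tt (PC2)} becomes $(\bbC(z,y)\sw\bbC(y,y))\circ\bbC(y,x)$; this is the second stated inequality. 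The two passages are manifestly mutually inverse, and {\tt (PC1)} has been absorbed into the definition $tx=\bbC(x,x)$.

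\emph{Functors and distributors.} These run along identical lines. For a partial functor, {\tt (F0)} $t(Fx)=tx$ becomes $\bbD(Fx,Fx)=\bbC(x,x)$, and {\tt (F1)} $\bbC(y,x)\le\bbD(Fy,Fx)$ is already in final form since the order of $\D(Q)$ is inherited from $Q$; conversely, these two conditions put $\bbC(y,x)$ and $\bbD(Fy,Fx)$ into the common hom-set $\D(Q)(tx,ty)$, so the inequality is well posed. For a distributor $\Phi\:\bbC\dist\bbD$: {\tt (D0)} gives $\Phi(y,x)\le\bbC(x,x)\wedge\bbD(y,y)$; the left action {\tt (D1)}, which is $\bbD(y',y)\circ_{ty}\Phi(y,x)\le\Phi(y',x)$, collapses -- using $\Phi(y,x)\le ty=\bbD(y,y)$ -- to $(\bbD(y',y)\sw\bbD(y,y))\circ\Phi(y,x)\le\Phi(y',x)$; and the right action {\tt (D2)}, which is $\Phi(y,x)\circ_{tx}\bbC(x,x')\le\Phi(y,x')$, collapses -- using $\Phi(y,x)\le tx=\bbC(x,x)$ -- to $\Phi(y,x)\circ(\bbC(x,x)\se\bbC(x,x'))\le\Phi(y,x')$. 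Granting the obvious compatibility with composition of functors and of distributors and with suprema, one even reads off an isomorphism of $\Par\Cat(Q)$, resp.\ $\Par\Dist(Q)$, with the category, resp.\ quantaloid, described in the statement, but only the object-level description is asserted there.

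The whole argument is bookkeeping; the single point that calls for care -- the only genuine ``obstacle'' -- is keeping track of the variance of the diagonals, i.e.\ which of $\sw$ or $\se$ occurs where in $e\circ_g d=(e\sw g)\circ g\circ(g\se d)$, and making sure that each use of divisibility is applied to a composite of the shape $g\circ(g\se d)$ (or $(d\sw g)\circ g$) with $d$ below $g$ in the relevant $\D(Q)$-hom-set. Note in particular that commutativity of $Q$ is never used: divisibility -- hence integrality, by Proposition~\ref{10.x} -- is all that is needed, even though the motivating example $R=([0,\infty]\op,+,0)$ happens to be commutative.
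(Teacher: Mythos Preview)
Your proposal is correct and follows essentially the same route as the paper's sketch of proof: both unwind the explicit description of a $\D(Q)$-category (resp.\ functor, distributor) using that divisibility forces $\D(Q)(f,g)=\ \downarrow\!(f\wedge g)$, that integrality of $\D(Q)$ together with {\tt (PC0)}--{\tt (PC1)} yields $tx=\bbC(x,x)$, and that the identity $g\circ(g\se d)=d$ for $d\le g$ collapses the $\D(Q)$-composition to the stated $Q$-expression. Your write-up is in fact more detailed than the paper's sketch, and your closing remark that commutativity plays no role is a worthwhile observation.
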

\begin{proof}[Sketch of proof.]
Take the explicit description, below Definition \ref{20}, of a partial $\Q$-category $\bbC$, and weed out the redundancies due to the particularities of the divisible quantale $Q$ (and the therefore also divisible quantaloid $\D(Q)$): because both the set of objects and the set of arrows of $\D(Q)$ are equal to $Q$, we find that the both the type function and the hom function take values in $Q$; $\D(Q)$ is integral and $tx=1_{tx}$ by construction, so the reflexivity of the hom function becomes $tx=\bbC(x,x)$, making the type function implicit in the hom function and {\tt [PC1]} obsolete; divisibility of $Q$ makes {\tt [PC0]} equivalent to $\bbC(x,y)\leq\bbC(x,x)\wedge\bbC(y,y)$; and formulating the composition in $\D(Q)$ back into terms proper to $Q$, {\tt [PC2]} is exactly $(\bbC(z,y)\swarrow\bbC(y,y))\circ\bbC(y,x)\leq\bbC(z,x)$. Similar simplifications apply to functors and distributors.
\end{proof}

Finally, we can fully develop -- as we set out to do -- the notion of `partial metric space':
\begin{example}\label{16}
For Lawvere's quantale $R=([0,\infty]\op,+,0)$, and adopting common notations, a partial $R$-category $\bbX$ is precisely a set $X:=\bbX_0$ together with a function $p:=\bbX:X\times X\to[0,\infty]$
satisfying
$$p(y,x)\geq p(x,x)\vee p(y,y)\mbox{ and }p(z,y)-p(y,y)+p(y,x)\geq p(z,x).$$
In line with Example \ref{d} we call such a structure $(X,p)$ a {\it generalised partial metric space}---indeed, upon imposing {\em finiteness}, {\em symmetry} and {\em separatedness}, we recover exactly the partial metric spaces of \cite{mat94}, whose definition we recalled in the Introduction. A partial functor $f\:(X,p)\to(Y,q)$ between such spaces is a non-expansive map $f\:X\to Y\:x\mapsto fx$ satisfying furthermore $p(x,x)=q(fx,fx)$; these objects and morphisms thus form the (locally ordered) category $\Par\Met:=\Par\Cat(R)=\Cat(\D(R))$.

Furthermore, the underlying locale $R_H=([0,\infty]\op,\vee,0)$ of the Lawvere quantale is also a divisible quantale. A partial $R_H$-enriched category $\bbX$ is a set $X:=\bbX_0$ together with a function $u:=\bbX\:X\times X\to[0,\infty]$
satisfying
$$\mbox{$u(y,x)\geq u(x,x)\vee u(y,y)$ \ and \ $u(z,y)\vee u(y,x)\geq u(z,x)$}.$$
For all the obvious reasons we shall call such a $(X,u)$ a {\it generalised partial ultrametric space}. These spaces are the objects of a locally ordered) category $\Gen\Par\Ult\Met:=\Par\Cat(R_H)=\Cat(\D(R_H))$. 
\end{example}

The commutative Diagram \eqref{14} of lax morphisms induces a commutative diagram
$$\xymatrix@=8ex{
\Gen\Ult\Met\ar[r]\ar[d] & \Gen\Met\ar[d] \\
\Gen\Par\Ult\Met\ar[r] & \Gen\Par\Met}$$
in which all arrows are full embeddings. When restricting to symmetric, finite and separating distance functions in all four categories in this square, one finds the appropriate categories of ``non-generalised'' (partial) (ultra)metric spaces.

On the other hand, as a corollary of Propositions \ref{7} and \ref{9} (which apply to $R$ as well as $R_H$!), we have three ways to compute a ``total'' (generalised) (ultra)metric from a partial one: given $(X,p)$ we find
\begin{itemize}
\item $p_0(y,x):=p(y,x)-p(y,y)$ via the lax morphism $J_0\:\D(R)\to R$,
\item $p_1(y,x):=p(y,x)-p(x,x)$ via the lax morphism $J_1\:\D(R)\to R$,
\item $p_K(y,x):=2p(y,x)-p(x,x)-p(y,y)$ via the lax morphism $K\:\D(R)\to R$.
\end{itemize}
These constructions will be useful in the next Section.

To end this Section, we insist on the fact that partial functors between (generalised) partial (ultra)metrics are non-expansive maps {\it that preserve self-distance}. At first sight this may seem too strong a requirement---would it not be more natural to allow (non-expansive) functions $f\:(X,p)\to (Y,q)$ to {\it decrease} the self-distances too? But for our later purposes (namely, to canonically associate a topology to every quantaloid-enriched category, and therefore also to each partial metric space) the latter type of map is not suitable (it does not give rise to continuous maps). However, there is also a simple algebraic argument in favour of maps that do not decrease self-distances (apart from their origin as functors in the appropriate categorical setting, {\it viz.}~as $\D(R)$-enriched functors). Consider the one-element partial metric space $\1_a$ whose single element has self-distance $a\in[0,\infty]$. General non-expansive maps $f\:\1_a\to(X,p)$ are in 1-1 correspondence with elements of $X$ whose self-distance is {\it at most} $a$; if we impose $f$ to preserve self-distance, then it picks out an element of $X$ whose self-distance is {\it exactly} $a$. The second situation is thus to be preferred, if one wants to be able to identify each element of $(X,p)$ with {\it precisely one} map defined on a singleton partial metric.

\section{Topology from enrichment}\label{C}

\subsection{Density and closure}\label{20x}

A functor $F\:\bbC\to\bbD$ between $\Q$-categories is fully faithful when $\bbC(y,x)=\bbD(Fy,Fx)$ for every $x\in\bbC_0$ and $y\in\bbD$; equivalently, this says that the unit of the adjunction of distributors $F_*\dashv F^*$ is an equality (instead of a mere inequality). The complementary notion to fully faithfulness will be of importance to us in this section: 
\begin{definition}\label{21}
A functor $F\:\bbC\to\bbD$ between $\Q$-categories is {\bf fully dense} if the counit of the adjunction of distributors $F_*\dashv F^*$ is an equality (instead of a mere inequality); explicitly, we have for all $x,y\in\bbD_0$ that
$$\bbD(y,x)=\bigvee_{c\in\bbC_0}\bbD(y,Fc)\circ\bbD(Fc,x).$$
\end{definition}
It is clear that an {\it essentially surjective} $F\:\bbC\to\bbD$ (meaning that for every $y\in\bbD$ there exists an $x\in\bbC$ such that $Fx\cong y$) is always fully dense; but the converse need not hold.
\begin{proposition}\label{22}
A functor $F\:\bbC\to\bbD$ between $\Q$-categories is fully dense if and only if it is essentially epimorphic, i.e.\ for every $H,K\:\bbD\to\bbE$, if $H\circ F\cong K\circ F$ then $H\cong K$.
\end{proposition}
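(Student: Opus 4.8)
The plan is to prove the two implications separately, using the embedding of $\Cat(\Q)$ into $\Dist(\Q)$ via graphs and cographs, and the fact that $F \leq G$ iff $F_* \leq G_*$ iff $F^* \geq G^*$.

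\emph{Fully dense implies essentially epimorphic.} Suppose $F\:\bbC\to\bbD$ is fully dense, so that $F_*\tensor F^* = \id_{\bbD}$. Let $H,K\:\bbD\to\bbE$ with $H\circ F\cong K\circ F$. Recall the functoriality of graphs/cographs: $(H\circ F)_* = H_*\tensor F_*$ and $(H\circ F)^* = F^*\tensor H^*$, and similarly for $K$. From $H\circ F\cong K\circ F$ we get $H_*\tensor F_* = K_*\tensor F_*$ and $F^*\tensor H^* = F^*\tensor K^*$. I would then compute $H_* = H_*\tensor\id_{\bbD} = H_*\tensor F_*\tensor F^* = K_*\tensor F_*\tensor F^* = K_*\tensor\id_{\bbD} = K_*$, whence $H\cong K$. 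The only thing to double-check here is the precise bookkeeping of variance (which composite is on which side), but this is routine.

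\emph{Essentially epimorphic implies fully dense.} This is the substantive direction, and the natural strategy is to find a suitable ``test'' codomain $\bbE$ and maps $H,K$ that detect failure of full density. The cleanest choice is to take $\bbE = \P\bbD$, the $\Q$-category of contravariant presheaves, together with $H = Y_{\bbD}\:\bbD\to\P\bbD$ the Yoneda embedding and $K\:\bbD\to\P\bbD$ the functor sending $y\mapsto F_*\tensor F^*\tensor\bbD(-,y)$, i.e.\ $y\mapsto (F_*\tensor F^*)(-,y)$ viewed as a presheaf on $\bbD$ (one must check this is a well-defined $\Q$-functor, which follows since $F_*\tensor F^*$ is a distributor $\bbD\dist\bbD$ and presheaves compose with distributors). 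Since $F^*\tensor F_* \leq \id_{\bbC}$ with the other triangle giving $F_*\tensor F^*\tensor F_* = F_*$, one computes $K\circ F \cong H\circ F$: precomposing with $F$ amounts to restricting the presheaves along $F$, and $(F_*\tensor F^*)\tensor F_* = F_*$ makes the two agree on the image of $F$ (again up to the $\cong$ coming from $Y_{\bbD}$ being fully faithful, so that equality of graphs is what is needed). Essential epimorphicity then forces $H\cong K$, i.e.\ $\bbD(-,y) = (F_*\tensor F^*)(-,y)$ for all $y$, which unwinds via the Yoneda Lemma ($\P\bbD(Y_{\bbD}x,\phi)=\phi(x)$) to $\bbD(x,y) = \bigvee_{c}\bbD(x,Fc)\circ\bbD(Fc,y)$ — exactly full density. (One can equally well use $\P\+\bbD$ and cographs; I would pick whichever makes the variances line up with the least friction.)

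The main obstacle is getting the second direction's test diagram exactly right: one must verify that $K$ as defined is genuinely a $\Q$-functor (not merely an object assignment), that $H\circ F\cong K\circ F$ holds honestly using the triangle identity $F_*\tensor F^*\tensor F_* = F_*$ together with full faithfulness of the Yoneda embedding, and finally that $H\cong K$ in $\Cat(\P\bbD)$ really does give \emph{equality} of the relevant homs (rather than just $\leq$ in one direction) — this last point is where one uses that $\Cat(\Q)$ is only locally \emph{ordered}, so $\cong$ means $\leq$ both ways, and both inequalities are needed since full density is an equality of distributors. I expect everything else to be a short diagram chase in $\Dist(\Q)$.
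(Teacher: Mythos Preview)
Your proposal is correct and follows essentially the same route as the paper. In the forward direction the paper does exactly your computation (written in the form $\bbE(-,H-)\tensor\bbD(-,F-)=\bbE(-,K-)\tensor\bbD(-,F-)$, then composed with $\bbD(F-,-)=F^*$); in the backward direction the paper uses precisely your test pair $Y_\bbD$ and $Z\:d\mapsto (F_*\tensor F^*)(-,d)$ into $\P\bbD$, verifying $Y_\bbD(Fc)=Z(Fc)$ directly (which is your triangle identity $F_*\tensor F^*\tensor F_*=F_*$ unpacked). Your worry about extracting genuine \emph{equality} from $H\cong K$ is resolved in the paper by the observation that $\P\bbD$ is skeletal, so $Y_\bbD d\cong Zd$ forces $Y_\bbD d=Zd$.
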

\begin{proof}
If $F$ is fully dense and $H\circ F\cong K\circ F$, then -- looking at the represented distributors -- we can precompose both sides of $\bbE(-,H-)\tensor\bbD(-,F-)=\bbE(-,K-)\tensor\bbD(-,F-)$ with $\bbD(F-,-)$ to find $\bbE(-,H-)=\bbE(-,K-)$, which means precisely that $H\cong K$.

To see the converse, consider the Yoneda embedding $Y_{\bbD}\:\bbD\to\P\bbD\:d\mapsto\bbD(-,d)$ alongside the functor $Z\:\bbD\to\P\bbD\:d\mapsto\bbD(-,F-)\tensor\bbD(F-,d)$. Because $Y_{\bbD}(Fc)=Z(Fc)$ holds for all $c\in\bbC$, the assumed essential epimorphic $F$ provides that $Y_{\bbD}d\cong Zd$ for all $d\in\bbD$---but since $\P\bbD$ is a skeletal $\Q$-category (isomorphic objects are necessarily equal), we actually have that $Y_{\bbD}d=Zd$ for all $d\in\bbD$. This says precisely that $F$ is fully dense.
\end{proof}

Whenever $\bbC$ is a $\Q$-category, any $S\subseteq\bbC_0$ determines a full subcategory $\bbS\hookto\bbC$. In particular, two subsets $S\subseteq T\subseteq\bbC_0$ determine an inclusion of full subcategories $\bbS\hookto\bbT\hookto\bbC$. Slightly abusing terminology we shall say that $S$ is fully dense in $T$ whenever the canonical inclusion $\bbS\hookto\bbT$ is fully dense. Fixing $S$, we now want to compute the largest $T$ in which $S$ is fully dense. 
\begin{lemma}\label{23.1}
If subsets $S,(T)_{i\in I}$ of $\bbC_0$ are such that $S$ is fully dense in each $T_i$, then $S$ is fully dense in $\bigcup_iT_i$.
\end{lemma}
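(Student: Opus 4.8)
The plan is to work directly with the explicit formula in Definition \ref{21}. Fix $S\subseteq\bbC_0$ and a family $(T_i)_{i\in I}$ of subsets of $\bbC_0$ in each of which $S$ is fully dense, and write $T=\bigcup_iT_i$. I must show that for all $x,y\in T$,
$$\bbC(y,x)=\bigvee_{s\in S}\bbC(y,s)\circ\bbC(s,x).$$
The inequality $\geq$ is automatic: each term $\bbC(y,s)\circ\bbC(s,x)$ is below $\bbC(y,x)$ by composing the triangle inequalities {\tt [C1]} in $\bbC$, so the supremum is too (this uses only that $S$-indexed joins of such composites never exceed $\bbC(y,x)$, which holds for \emph{any} $S$). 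So the content is the inequality $\leq$.

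Here is where I expect the main obstacle, and it is genuinely a subtlety rather than a triviality: full density of $S$ in $T_i$ gives the desired equation only when \emph{both} $x$ and $y$ lie in the same $T_i$. But a general pair $x,y\in T$ need only satisfy $x\in T_i$, $y\in T_j$ for possibly distinct $i,j$. So a single application of the hypothesis will not suffice; I will need to \emph{chain} two applications, interpolating through $S$. Concretely, the idea is: since $x\in T_i$ and $S$ is fully dense in $T_i$, and since $x\in\bbC$, I would like to first rewrite $\bbC(y,x)$ using an intermediate object in $T_i$ (or rather, directly an element of $S$), then handle the $y$-side using $T_j$. The clean way to organize this is to note that the counit being an equality is equivalent to the statement that the inclusion distributor factors appropriately; but at the elementary level the trick is to use {\tt [C1]} together with the $T_i$-density and $T_j$-density in sequence.

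The key steps I would carry out, in order, are as follows. First, record the easy inequality $\geq$ as above. Second, for $x\in T_i$ and $y\in T_j$, observe that since $S$ is fully dense in $T_j$ and both $y$ and (viewing it in $\bbC$) we can insert $S$ between $y$ and an auxiliary object: apply density in $T_j$ to the pair $(y,z)$ for a well-chosen $z$. The cleanest route: take any $x\in T_i$; since $S\subseteq T_i$ we have in particular $x\in T_i$, so full density of $S$ in $T_i$ applied to the pair $(y',x)$ for $y'\in S\subseteq T_i$ gives $\bbC(y',x)=\bigvee_{s\in S}\bbC(y',s)\circ\bbC(s,x)$ — but that already has $y'\in S$, so it is trivially an equality. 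Instead I think the correct decomposition is: use density of $S$ in $T_j$ on the pair $(y,x)$ — \emph{this requires $x\in T_j$}, which may fail. So the honest argument must be: write $\bbC(y,x)$, use {\tt [C1]} to bound it below (wrong direction) — no. Let me restate the real plan: apply full density of $S$ in $T_i$ to express $\bbC(s',x)$ for $s'\in S$, and full density of $S$ in $T_j$ to express $\bbC(y,s')$; then compose. Precisely, for each $s'\in S$, $\bbC(y,s')=\bigvee_{s\in S}\bbC(y,s)\circ\bbC(s,s')$ since $y,s'\in T_j$, and similarly $\bbC(s',x)=\bigvee_{s''\in S}\bbC(s',s'')\circ\bbC(s'',x)$ since $s',x\in T_i$. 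But I still need a starting point relating $\bbC(y,x)$ to $S$; for that, pick any $s_0\in S$ and use that $1_{ts_0}\leq\bbC(s_0,s_0)$ is not enough unless $S$ nonempty and types match. The decisive observation is surely that full density of $S$ in $T_i\cup T_j$ would follow if I can show density in the \emph{pairwise union}, i.e.\ it suffices to treat $I=\{1,2\}$, $T=T_1\cup T_2$, and then conclude the general (even infinite) case since any pair $x,y\in\bigcup_iT_i$ lies in $T_i\cup T_j$ for some $i,j$. So the whole lemma reduces to the two-set case, and there the argument is: for $x\in T_1$, $y\in T_2$, use density in $T_1$ to get $\bbC(z,x)=\bigvee_{s}\bbC(z,s)\circ\bbC(s,x)$ for $z\in T_1$, in particular this holds with $z$ ranging over $S\subseteq T_1$; then for the pair $(y,s)$ with $s\in S\subseteq T_2$, use density in $T_2$. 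Chaining and using {\tt [C1]} to collapse $\bbC(s,s'')\circ\bbC(s'',x)$-type products, together with $\bbC(y,x)\leq$ the big supremum obtained, closes the argument. I would present this chaining cleanly using the distributor-level statement (the counit equality composes), which avoids the index bookkeeping: full density of $\bbS\hookto\bbT_i$ means $(\iota_i)_*\tensor(\iota_i)^* = \id$ at the level of the full subcategory, and composing two such equalities through $\bbS$ gives full density in the union. The main obstacle, then, is purely the passage through \emph{distinct} $T_i$'s, resolved by the two-step interpolation through $S$ plus the triangle inequality; once that is set up, the reduction from arbitrary $I$ to pairs and the monotonicity bookkeeping are routine.
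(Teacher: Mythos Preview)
You have correctly identified the genuine obstacle---that the density hypothesis only controls pairs $(y,x)$ lying in a \emph{single} $T_i$---but your proposal never actually bridges it. All of your ``chaining'' manipulations take place on the right-hand side $\bigvee_{s\in S}\bbC(y,s)\circ\bbC(s,x)$: you rewrite $\bbC(y,s)$ and $\bbC(s,x)$ using density (which, since $s\in S$, is a tautology), and then assert ``together with $\bbC(y,x)\leq$ the big supremum obtained, closes the argument''. But that inequality $\bbC(y,x)\leq\bigvee_{s}\bbC(y,s)\circ\bbC(s,x)$ is precisely the thing to be proved; nothing you have written gives a starting bound on $\bbC(y,x)$ in terms of $S$. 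The distributor-level remark at the end has the same defect: the counit equalities $(\iota_i)_*\otimes(\iota_i)^*=\id_{\bbT_i}$ live over \emph{different} bases $\bbT_i$, and there is no evident way to ``compose them through $\bbS$'' to produce $(\iota)_*\otimes(\iota)^*=\id_{\bbT}$ for the union.

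The paper sidesteps the whole issue by using the characterisation of full density as essential epimorphy (Proposition~\ref{22}): if $F,G\:\bbT\to\bbD$ agree on $S$, then their restrictions to each $\bbT_i$ agree on $S$, hence (by density of $S$ in $T_i$) agree on $T_i$; since the $T_i$ cover $T$, $F\cong G$. No element-level computation is needed. If you prefer a direct argument along your lines, the missing step is this: for $x\in T_i$, apply density of $S$ in $T_i$ to the pair $(x,x)$ (both coordinates lie in $T_i$!) to obtain $1_{tx}\leq\bbC(x,x)=\bigvee_{s\in S}\bbC(x,s)\circ\bbC(s,x)$; then for arbitrary $y\in T$,
\[
\bbC(y,x)=\bbC(y,x)\circ 1_{tx}\leq\bbC(y,x)\circ\bigvee_{s}\bbC(x,s)\circ\bbC(s,x)=\bigvee_{s}\bbC(y,x)\circ\bbC(x,s)\circ\bbC(s,x)\leq\bigvee_{s}\bbC(y,s)\circ\bbC(s,x),
\]
using {\tt [C1]} in the last step. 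This is essentially the computation that appears later in the proof of Proposition~\ref{24}, and it shows that density in \emph{one} $T_i$ containing $x$ already suffices---no two-step chaining is required.
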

\begin{proof}
Let us write respectively $\bbS$, $\bbT_i$ and $\bbT$ for the full subcategories of $\bbC$ determined by $S\subseteq\bbC_0$, $T_i\subseteq\bbC_0$ and $\bigcup_iT_i\subseteq\bbC_0$. Suppose that functors $F,G\:\bbT\to\bbD$ agree (to within isomorphism) on $S$, then density of $S$ in each $T_i$ makes them agree on each $T_i$, and therefore on $\bigcup_iT_i$. That is, the inclusion of $S$ in $\bigcup_iT_i$ is fully dense, according to Proposition \ref{22}.
\end{proof}
The above lemma allows for the following definition:
\begin{definition}\label{23}
Let $\bbC$ be a $\Q$-category. The {\bf categorical closure} of a subset $S\subseteq\bbC_0$ is the largest subset $\cl{S}\subseteq\bbC_0$ in which $S$ is fully dense; that is to say, 
$$\cl{S}=\bigcup\{T\subseteq\bbC_0\mid\mbox{$S$ is fully dense in $T$}\}.$$
\end{definition}
To explicitly compute the closure of a subset $S$ of objects of $\bbC$, we can use:
\begin{proposition}\label{24}
Let $\bbC$ be a $\Q$-category and for $S\subseteq\bbC_0$ write $i\:\bbS\hookto\bbC$ for the corresponding full embedding. For an object $x\in\bbC$ the following are equivalent:
\begin{enumerate}
\item\label{aa} $x\in\cl{S}$,
\item\label{bb} $\bbC(i-,x)\dashv\bbC(x,i-)$, or explicitly: $1_{tx}\leq\bigvee_{s\in S}\bbC(x,s)\circ\bbC(s,x)$,
\item\label{cc} $\bbC(x,x)=\bbC(x,i-)\tensor\bbC(i-,x)$, or explicitly: $\bbC(x,x)=\bigvee_{s\in S}\bbC(x,s)\circ\bbC(s,x)$,
\item\label{dd} for every $F,G\:\bbC\to\bbD$, if $F_{|S}\cong G_{|S}$ then $Fx\cong Gx$.
\end{enumerate}
\end{proposition}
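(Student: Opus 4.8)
The plan is to prove the cycle of implications $(\ref{aa})\Rightarrow(\ref{cc})\Rightarrow(\ref{bb})\Rightarrow(\ref{dd})\Rightarrow(\ref{aa})$, exploiting the already-established Proposition \ref{22} (fully dense $=$ essentially epimorphic) and Definition \ref{21}, together with the general adjunction calculus in $\Dist(\Q)$. The underlying observation is that, for the full embedding $i\:\bbS\hookto\bbC$, the distributors $i_*\:\bbS\dist\bbC$ and $i^*\:\bbC\dist\bbS$ have elements $i_*(c,s)=\bbC(c,s)$ and $i^*(s,c)=\bbC(s,c)$, so that the composite $(i^*\tensor i_*)(c,c')=\bigvee_{s\in S}\bbC(c,s)\circ\bbC(s,c')$, whereas $i_*\tensor i^*=\id_{\bbS}$ always (since $i$ is fully faithful). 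The content of ``$S$ fully dense in $T$'' (for $S\subseteq T\subseteq\bbC_0$) is exactly that the counit $i^T_*\tensor (i^T)^* \le \id_{\bbT}$ is an equality, i.e. $\bbC(y,x)=\bigvee_{s\in S}\bbC(y,s)\circ\bbC(s,x)$ for all $x,y\in T$.

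First I would handle $(\ref{aa})\Leftrightarrow(\ref{cc})$ together with the single-object reformulation: the key point is that density of $S$ in $T$ is a ``pointwise'' condition, in the sense that it suffices to check the counit-equality on the diagonal, $\bbC(x,x)=\bigvee_{s\in S}\bbC(x,s)\circ\bbC(s,x)$ for each $x\in T$. Indeed, given this diagonal equality for $x$ and $y$, the off-diagonal identity $\bbC(y,x)=\bigvee_{s}\bbC(y,s)\circ\bbC(s,x)$ follows from the inequality $\le$ coming from {\tt [C1]}, combined with the triangle estimates $\bbC(y,x)=\bbC(y,y)\circ\bbC(y,x)=\bigl(\bigvee_s\bbC(y,s)\circ\bbC(s,y)\bigr)\circ\bbC(y,x)\le\bigvee_s\bbC(y,s)\circ\bbC(s,x)$ using {\tt [C1]} again inside the bracket. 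Consequently $\cl S=\{x\in\bbC_0\mid \bbC(x,x)=\bigvee_{s\in S}\bbC(x,s)\circ\bbC(s,x)\}$, which is precisely $(\ref{aa})\Leftrightarrow(\ref{cc})$, and Lemma \ref{23.1} guarantees that this set really is the largest $T$ in which $S$ is dense.

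Next, $(\ref{bb})\Leftrightarrow(\ref{cc})$: the explicit forms make $(\ref{bb})$ read $1_{tx}\le\bigvee_{s}\bbC(x,s)\circ\bbC(s,x)$ and $(\ref{cc})$ read $\bbC(x,x)=\bigvee_{s}\bbC(x,s)\circ\bbC(s,x)$. Since always $\bigvee_s\bbC(x,s)\circ\bbC(s,x)\le\bbC(x,x)$ by {\tt [C1]} and always $1_{tx}\le\bbC(x,x)$ by {\tt [C2]}, one direction is immediate; for the converse, from $1_{tx}\le\bigvee_s\bbC(x,s)\circ\bbC(s,x)$ one composes on the left with $\bbC(x,x)$ and uses $\bbC(x,x)\circ\bbC(x,s)\le\bbC(x,s)$ to get $\bbC(x,x)\le\bigvee_s\bbC(x,s)\circ\bbC(s,x)$, hence equality. (This is also just the standard fact that an adjunction $\phi\dashv\phi^*$ of presheaves in the skeletal setting is witnessed by the unit being an equality; here the unit of $i_*\dashv i^*$ ``at $x$'' is $1_{tx}\le\bbC(x,i-)\tensor\bbC(i-,x)$ after using $i^*\tensor i_*=\id$, but I would keep the argument elementary.) Then $(\ref{dd})$ is essentially Proposition \ref{22} localised at a single object: if $x\in\cl S$, then $S$ is dense in $S\cup\{x\}$, so any $F,G$ agreeing on $S$ agree on $S\cup\{x\}$, in particular $Fx\cong Gx$; conversely, taking $\bbD=\P\bbC$, $F=Y_{\bbC}$ and $G=\bigl(c\mapsto\bbC(i-,c)\tensor\bbC(c,i-)\bigr)$ — the same trick as in the proof of Proposition \ref{22} — one has $F_{|S}=G_{|S}$, so $(\ref{dd})$ forces $Y_{\bbC}x\cong Zx$, and skeletality of $\P\bbC$ upgrades this to equality $\bbC(-,x)=\bbC(i-,x)\tensor\bbC(-,i-)$, whose value at $x$ is exactly $(\ref{cc})$ via the Yoneda Lemma.

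The main obstacle I anticipate is the passage from the diagonal (pointwise) equality to the full counit-equality defining fully dense — i.e.\ making precise that closure can be computed object-by-object — and, relatedly, being careful that in condition $(\ref{dd})$ the codomain $\bbD$ ranges over all $\Q$-categories while the witnessing functor $Z$ lands in the skeletal $\P\bbC$, so that ``$\cong$'' can be promoted to ``$=$''. Both are handled by the triangle-inequality manipulations above and by the skeletality of presheaf categories noted just before Example \ref{q}; none of it is deep, but the bookkeeping with $\swarrow/\searrow$-free composites and the $\bigvee$ over $S$ must be done attentively.
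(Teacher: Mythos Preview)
Your proposal is correct and follows essentially the same approach as the paper: the cycle in the paper is $(\ref{aa})\Rightarrow(\ref{dd})\Rightarrow(\ref{cc})\Rightarrow(\ref{bb})\Rightarrow(\ref{aa})$, and the two substantive steps---using density plus Proposition~\ref{22} for $(\ref{aa})\Rightarrow(\ref{dd})$, and the auxiliary functor $Z\:c\mapsto\bbC(-,i-)\tensor\bbC(i-,c)$ into $\P\bbC$ for $(\ref{dd})\Rightarrow(\ref{cc})$---are exactly the ones you use. Your explicit reduction of full density to the diagonal equality is the content of the paper's $(\ref{bb})\Rightarrow(\ref{aa})$ computation.

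Two small notational slips to fix: under the paper's composition convention you have the factors reversed in several places---the counit of $i_*\dashv i^*$ is $i_*\tensor i^*\le\id_{\bbC}$ (not $i^*\tensor i_*$), full faithfulness of $i$ gives $i^*\tensor i_*=\id_{\bbS}$, and your auxiliary functor should read $G\:c\mapsto\bbC(-,i-)\tensor\bbC(i-,c)$ so that it lands in $\P\bbC$. Also, your announced cycle $(\ref{aa})\Rightarrow(\ref{cc})\Rightarrow(\ref{bb})\Rightarrow(\ref{dd})\Rightarrow(\ref{aa})$ does not match what you actually prove (you never argue $(\ref{bb})\Rightarrow(\ref{dd})$ directly); the implications you do establish still cover all equivalences, but it would be cleaner to state the cycle you really run.
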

\begin{proof}
(\ref{aa} $\impl$ \ref{dd}) By density of $S$ in $\cl{S}$, whenever $F$ and $G$ agree (up to isomorphism) on $S$ then they necessarily do so on $\cl{S}$ too. In particular $Fx\cong Gx$ whenever $x\in\cl{S}$. 

(\ref{dd} $\impl$ \ref{cc}) For the functors 
$$F\:\bbC\to\P\bbC\:c\mapsto\bbC(-,i-)\tensor\bbC(i-,c)\quad\mbox{ and }\quad G=Y_{\bbC}\:\bbC\to\P\bbC\:c\mapsto\bbC(-,c)$$
we have (much as in the proof of Proposition \ref{22}) for any $s\in S$ that $Fs=\bbC(-,i-)\tensor\bbC(i-,s)=\bbC(-,s)=Gs$. So $F_{|S}\cong G_{|S}$, and therefore $F\cong G$ by assumption, from which $\bbC(x,x)=(Gx)(x)=(Fx)(x)=\bigvee_{s\in S}\bbC(x,s)\circ\bbC(s,x)$ follows.

(\ref{cc} $\impl$ \ref{bb}) Is trivial.

(\ref{bb} $\impl$ \ref{aa}) For $T=\{x\in\bbC_0\mid1_{tx}\leq\bigvee_{s\in S}\bbC(x,s)\circ\bbC(s,x)\}$
we surely have $S\subseteq T$; so let $j\:\bbS\hookto\bbT$ be the corresponding full embedding. For any $x,y\in T$ we have $\bbT(y,x)=\bbC(y,x)$, so we can use the composition inequality in $\bbT$ to compute that
\begin{eqnarray*}
\bbT(y,x)
 & \geq & \bigvee_{s\in S}\bbT(y,js)\circ\bbT(js,x) \\
 & \geq & \bigvee_{s\in S}\bbT(y,x)\circ\bbT(x,js)\circ\bbT(js,x) \\
 & \geq & \bbT(y,x)\circ\bigvee_{s\in S}\bbT(x,js)\circ\bbT(js,x) \\
 & \geq & \bbT(y,x)\circ 1_{tx} \\
 & = & \bbT(y,x)
\end{eqnarray*}
This shows $S$ to be fully dense in $T$, and therefore $T\subseteq\cl{S}$. 
\end{proof}
Next we prove that the term `closure' is well-chosen:
\begin{proposition}\label{27}
For every $\Q$-category $\bbC$, $(\bbC_0,\cl{(\cdot)})$ is a closure space, and for every functor $F\:\bbC\to\bbD$, $F\:(\bbC_0,\cl{(\cdot)})\to(\bbD_0,\cl{(\cdot)})$ is a continuous function. This makes for a functor $\Cat(\Q)\to\Clos$.
\end{proposition}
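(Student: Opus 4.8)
The plan is to base everything on the explicit membership criterion of Proposition~\ref{24}: $x\in\cl S$ precisely when $1_{tx}\leq\bigvee_{s\in S}\bbC(x,s)\circ\bbC(s,x)$ in $\Q$. With that in hand, the closure-operator axioms on the power set of $\bbC_0$ become elementary manipulations with local suprema and the category axioms {\tt [C1]}, {\tt [C2]}. For \emph{extensivity} $S\subseteq\cl S$: given $x\in S$ I pick the index $s=x$ and use {\tt [C2]} to get $\bigvee_{s\in S}\bbC(x,s)\circ\bbC(s,x)\geq\bbC(x,x)\circ\bbC(x,x)\geq 1_{tx}$. For \emph{monotonicity} $S\subseteq T\impl\cl S\subseteq\cl T$: enlarging the range of the join $\bigvee_{s}\bbC(x,s)\circ\bbC(s,x)$ can only make it bigger, so the defining inequality for membership in $\cl S$ entails the one for $\cl T$.

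The one step needing a little care is \emph{idempotency} $\cl{\cl S}=\cl S$; the inclusion $\cl S\subseteq\cl{\cl S}$ is extensivity, so the point is the reverse one. I would argue directly: if $x\in\cl{\cl S}$ then $1_{tx}\leq\bigvee_{y\in\cl S}\bbC(x,y)\circ\bbC(y,x)$, and for each such $y$ one has $1_{ty}\leq\bigvee_{s\in S}\bbC(y,s)\circ\bbC(s,y)$. Feeding the second inequality into the first at the spot $\bbC(x,y)\circ\bbC(y,x)=\bbC(x,y)\circ 1_{ty}\circ\bbC(y,x)$, and then collapsing the resulting fourfold composites by two applications of {\tt [C1]}, yields for every $y\in\cl S$
$$\bbC(x,y)\circ\bbC(y,x)\leq\bigvee_{s\in S}\bbC(x,y)\circ\bbC(y,s)\circ\bbC(s,y)\circ\bbC(y,x)\leq\bigvee_{s\in S}\bbC(x,s)\circ\bbC(s,x);$$
taking the join over $y\in\cl S$ and comparing with the first inequality gives $1_{tx}\leq\bigvee_{s\in S}\bbC(x,s)\circ\bbC(s,x)$, i.e.\ $x\in\cl S$. (Alternatively one may note that $S$ is fully dense in $\cl S$ by Lemma~\ref{23.1}, that $\cl S$ is trivially fully dense in $\cl{\cl S}$, and that a composite of fully dense functors is fully dense, since by Proposition~\ref{22} a composite of essentially epimorphic functors is essentially epimorphic.) This shows $(\bbC_0,\cl{(\cdot)})$ is a closure space.

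For continuity of $F\:\bbC\to\bbD$ I would take $x\in\cl S$ and invoke the functor axioms {\tt (F0)}, {\tt (F1)}: from $t(Fx)=tx$, $\bbC(x,s)\leq\bbD(Fx,Fs)$ and $\bbC(s,x)\leq\bbD(Fs,Fx)$ one obtains
$$1_{t(Fx)}=1_{tx}\leq\bigvee_{s\in S}\bbC(x,s)\circ\bbC(s,x)\leq\bigvee_{s\in S}\bbD(Fx,Fs)\circ\bbD(Fs,Fx)\leq\bigvee_{y\in F(S)}\bbD(Fx,y)\circ\bbD(y,Fx),$$
so $Fx\in\cl{F(S)}$ by Proposition~\ref{24}; hence $F(\cl S)\subseteq\cl{F(S)}$, which is exactly continuity. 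Functoriality of the assignment $\bbC\mapsto(\bbC_0,\cl{(\cdot)})$, $F\mapsto F$ is then immediate, since each $\Q$-functor is sent to its underlying map of objects and so identities and composites are visibly preserved. The only genuine obstacle in the whole argument is the bookkeeping in the idempotency step; once Proposition~\ref{24} is available, everything else is routine.
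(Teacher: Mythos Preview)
Your proof is correct and, for the continuity of functors and the functoriality of the assignment, essentially identical to the paper's. The one difference is in the idempotency step: the paper takes as its primary argument precisely what you list as the alternative---$S$ is fully dense in $\cl S$ (via Lemma~\ref{23.1}), $\cl S$ is fully dense in $\cl{\cl S}$ by definition, and composites of fully dense functors are fully dense---whereas your primary route is the direct elementwise computation inserting $1_{ty}$ and applying {\tt [C1]} twice. Both are valid; your computation is a little more self-contained (it uses only Proposition~\ref{24} and the category axioms), while the paper's density-composition argument better reflects the conceptual definition of $\cl{(\cdot)}$ in Definition~\ref{23}.
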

\begin{proof}
It is straightforward to check that $S\mapsto\cl{S}$ is a monotone and increasing operation on the subsets of $\bbC_0$. As $S$ is fully dense in $\cl{S}$, which itself is fully dense in $\cl{\cl{S}}$, and the composition of two fully dense functors is again fully dense, it follows easily that $S$ is fully dense in $\cl{\cl{S}}$, so $\cl{\cl{S}}\subseteq\cl{S}$. This makes $(\bbC_0,\cl{(\cdot)})$ a closure space.

Now fix $S\subseteq\bbC_0$, and suppose that $x\in\cl{S}$. Functoriality of $F\:\bbC\to\bbD$ implies that 
$$1_{tFx}=1_{tx}\leq\bigvee_{s\in S}\bbC(x,s)\circ\bbC(s,x)\leq\bigvee_{s\in S}\bbD(Fx,Fs)\circ\bbD(Fs,Fx)= \bigvee_{t\in FS}\bbD(Fx,t)\circ\bbD(t,Fx),$$
which goes to show that $Fx\in\cl{FS}$. This makes $F\:(\bbC_0,\cl{(\cdot)})\to(\bbD_0,\cl{(\cdot)})$ a continuous function.

The functoriality of these constructions is a mere triviality.
\end{proof}
The following example nicely relates to Subsection \ref{X}.
\begin{example}\label{31.1}
Via the Yoneda embedding $Y_{\bbC}\:\bbC\to\P\bbC\:x\mapsto\bbC(-,x)$ we may consdider any $\Q$-category $\bbC$ as a full subcategory of the presheaf $\Q$-category $\P\bbC$: so $Y_{\bbC}(\bbC)$ is precisely the full subcategory of representable presheaves. For any presheaf $\phi\:\1_X\dist\bbC$ we may compute -- using the Yoneda Lemma -- that
$$\phi\in\cl{Y_{\bbC}(\bbC)}\iff 1_X\leq\bigvee_{x\in\bbC_0}\P\bbC(\phi,Y_{\bbC}x)\circ\P\bbC(Y_{\bbC}x,\phi)\iff 1_X\leq\bigvee_{x\in\bbC_0}\P\bbC(\phi,Y_{\bbC}x)\circ\phi(x).$$
On the other hand, in $\Dist(\Q)$ we have (as in any quantaloid) that $\phi\:\1_X\dist\bbC$ is a left adjoint if and only if its lifting through the identity, namely $\phi\se\bbC:\bbC\dist\1_X$, is its right adjoint, if and only if
$$1_X\le\bigvee_{x\in\bbC_0}(\phi\se\bbC)(x)\circ \phi(x)$$
holds. Because $(\phi\se\bbC)(x)=\phi\se\bbC(-,x)=\P\bbC(\phi,Y_\bbC(x))$ we thus find that $\phi\in\cl{Y_{\bbC}(\bbC)}$ exactly when $\phi$ is a left adjoint; or in words: {\em the Cauchy completion $\bbC\cc$ of $\bbC$ is the categorical closure of $\bbC$ in the free completion $\P\bbC$}.
\end{example}

\subsection{Strong Cauchy bilaterality---revisited}

Suppose now that $\Q$ is an involutive quantaloid (and, as usual, write $f\mapsto f\o$ for the involution). When $\bbC$ is a $\Q$-category and $S\subseteq\bbC_0$ determines the full subcategory $\bbS\hookto\bbC$, then that same set $S$ also determines a full subcategory $\bbS\s\hookto\bbC\s$ of the symmetrisation $\bbC\s$ of $\bbC$. Thus we may compute {\it two} closures of $S$: for notational convenience, let us write $\cl{S}$ for its closure in $\bbC$, and $\scl{S}$ for its closure in $\bbC\s$. We can then spell out that, for any $x\in\bbC_0$,
\begin{equation}\label{25}
x\in\cl{S}\iff 1_{tx}\leq\bigvee_{s\in S}\bbC(x,s)\circ\bbC(s,x)
\end{equation}
whereas
\begin{equation}\label{26}
x\in\scl{S}\iff 1_{tx}\leq\bigvee_{s\in S}\bbC\s(x,s)\circ\bbC\s(s,x)\iff1_{tx}\leq\bigvee_{s\in S}(\bbC(x,s)\wedge\bbC(s,x)\o)\circ(\bbC(s,x)\wedge\bbC(x,s)\o).
\end{equation}
It is straightforward that the second condition implies the first (without any further condition on $\Q$), so that $\scl{S}\subseteq\cl{S}$. This inclusion can be strict---but we have that:
\begin{proposition}\label{28}
For an involutive quantaloid $\Q$, the following conditions are equivalent:
\begin{enumerate}
\item for every $\Q$-category $\bbC$ and every subset $S\subseteq\bbC_0$, the closure of $S$ in $\bbC$ coincides with the closure of $S$ in $\bbC\s$,
\item $\Q$ is strongly Cauchy bilateral: for every family $(f_i\:X\to Y_i, g_i\:Y_i\to X)_{i\in I}$  of morphisms in $\Q$, $1_X\leq\bigvee_ig_i\circ f_i$ implies $1_X\leq\bigvee_i(g_i\wedge f_i\o)\circ(g_i\o\wedge f_i)$. 
\end{enumerate}
\end{proposition}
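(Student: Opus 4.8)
The first observation is that one inclusion is automatic: combining \eqref{25} and \eqref{26}, the meet $\bbC(x,s)\wedge\bbC(s,x)\o$ sits below $\bbC(x,s)$ while $\bbC(s,x)\wedge\bbC(x,s)\o$ sits below $\bbC(s,x)$, so $x\in\scl{S}$ always forces $x\in\cl{S}$; thus $\scl{S}\subseteq\cl{S}$ for every $\Q$-category $\bbC$ and every $S\subseteq\bbC_0$. Hence condition~(1) is equivalent to: the reverse inclusion $\cl{S}\subseteq\scl{S}$ holds in all cases. So the plan is to prove ``$\cl{S}\subseteq\scl{S}$ always $\iff$ $\Q$ strongly Cauchy bilateral''.

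\textbf{From strong Cauchy bilaterality to the closure identity.} This direction is a direct application. Let $\bbC$ be a $\Q$-category, $S\subseteq\bbC_0$ and $x\in\cl{S}$; by Proposition~\ref{24} this means $1_{tx}\leq\bigvee_{s\in S}\bbC(x,s)\circ\bbC(s,x)$. I would feed the family $\bigl(f_s:=\bbC(s,x)\:tx\to ts,\ g_s:=\bbC(x,s)\:ts\to tx\bigr)_{s\in S}$ into the strong-Cauchy-bilaterality condition: since $g_s\circ f_s=\bbC(x,s)\circ\bbC(s,x)$, its hypothesis $1_{tx}\leq\bigvee_s g_s\circ f_s$ is exactly what we have, so we obtain $1_{tx}\leq\bigvee_s(g_s\wedge f_s\o)\circ(g_s\o\wedge f_s)$, and as $g_s\wedge f_s\o=\bbC\s(x,s)$ and $g_s\o\wedge f_s=\bbC\s(s,x)$ this is precisely $1_{tx}\leq\bigvee_s\bbC\s(x,s)\circ\bbC\s(s,x)$, i.e.\ $x\in\scl{S}$ by \eqref{26}. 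Hence $\cl{S}\subseteq\scl{S}$.

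\textbf{From the closure identity to strong Cauchy bilaterality.} Here, given a family $(f_i\:X\to Y_i,\ g_i\:Y_i\to X)_{i\in I}$ with $1_X\leq\bigvee_i g_i\circ f_i=:p$, I would produce a $\Q$-category $\bbC$ with a marked object $\ast$ of type $X$ and a subset $S$ such that the hypothesis forces $\ast\in\cl{S}$ while $\ast\in\scl{S}$ forces $1_X\leq\bigvee_i(g_i\wedge f_i\o)\circ(g_i\o\wedge f_i)$; then (1) bridges the two. The tempting choice $\bbC_0=\{\ast\}\cup\{s_i\mid i\in I\}$ with $\bbC(\ast,s_i)=g_i$, $\bbC(s_i,\ast)=f_i$ is \emph{not} legitimate for a general family: {\tt [C1]} on $(\ast,s_i,\ast)$ makes $\bbC(\ast,\ast)\geq p$, and then {\tt [C1]} on $(\ast,\ast,s_j)$ would force $p\circ g_j\leq g_j$, which we have not assumed. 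So I would instead take $\bbC$ to be the \emph{free} $\Q$-category on the $\Q$-graph carrying exactly these edges (identity loops, $\bbC(s_i,s_j)=\bot$ for $i\neq j$); there $\bbC(\ast,\ast)=m$, $\bbC(\ast,s_i)=m\circ g_i$, $\bbC(s_i,\ast)=f_i\circ m$, where $m:=\bigvee_{n\geq1}p^{\,n}$ is the idempotent $\geq1_X$ that absorbs $p$ on both sides. With $S=\{s_i\mid i\in I\}$ one gets $\bigvee_i\bbC(\ast,s_i)\circ\bbC(s_i,\ast)=m\circ p\circ m=m\geq1_X$, so the hypothesis indeed puts $\ast$ in $\cl{S}$; and $\ast\in\scl{S}$ unwinds to $1_X\leq\bigvee_i\bigl(m g_i\wedge m\o f_i\o\bigr)\circ\bigl(f_i m\wedge g_i\o m\o\bigr)$, from which one must extract the ``un-saturated'' inequality $1_X\leq\bigvee_i(g_i\wedge f_i\o)\circ(g_i\o\wedge f_i)$. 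An equivalent and probably more transparent route is to run the argument inside a presheaf category: by Example~\ref{31.1} the categorical closure of $\bbC$ in $\P\bbC$ is its Cauchy completion $\bbC\cc$, so realising the datum $(f_i,g_i)_i$ as the components of a suitable presheaf together with its right-adjoint candidate turns an appropriate instance of $\cl{(\cdot)}=\scl{(\cdot)}$ directly into the required implication.

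\textbf{Where the difficulty lies.} The forward direction is pure bookkeeping; all the content is in the backward direction, and in particular in the last step just described, where one must disentangle the saturating idempotent $m$ from the involution inside the meets of the symmetrised hom-arrows and recover a statement purely about the original $f_i$ and $g_i$. This is precisely the computation the involution axioms $(g\circ f)\o=f\o\circ g\o$, $f\o{}\o=f$, together with the preservation of meets by right adjoints, are designed to handle, but lining up the resulting chain of (in)equalities correctly is the delicate point of the whole proposition.
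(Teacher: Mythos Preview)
Your direction $(2)\Rightarrow(1)$ is correct and is exactly the paper's argument: feed the hom-family $(f_s,g_s)=(\bbC(s,x),\bbC(x,s))$ into the strong Cauchy bilaterality hypothesis and read off $x\in\scl{S}$.

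For $(1)\Rightarrow(2)$ you put your finger on a real issue. The ``tempting'' construction---objects $\{\ast\}\cup\{s_i\mid i\in I\}$, with $\bbC(\ast,s_i)=g_i$, $\bbC(s_i,\ast)=f_i$, $\bbC(\ast,\ast)=1_X$, $\bbC(s_i,s_i)=1_{Y_i}$ and $\bbC(s_i,s_j)=0$ for $i\neq j$---is in general \emph{not} a $\Q$-category: {\tt[C1]} on $(\ast,s_i,\ast)$ would force $g_i\circ f_i\leq 1_X$, and on $(s_i,\ast,s_j)$ would force $f_i\circ g_j\leq\bbC(s_i,s_j)$, neither of which is part of the hypothesis. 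It is worth noting that the paper's own proof uses precisely this construction and simply asserts that it defines a $\Q$-category, so the concern you raise applies verbatim to the published argument.

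Your remedy, passing to the free $\Q$-category on that graph, is the natural next step, but you do not carry it through. After saturation by $m=\bigvee_{n\geq 1}p^{\,n}$ the symmetrised homs become $mg_i\wedge m\o f_i\o$ and $f_im\wedge g_i\o m\o$, and applying $(1)$ yields only
\[
1_X\leq\bigvee_i\bigl(mg_i\wedge m\o f_i\o\bigr)\circ\bigl(f_im\wedge g_i\o m\o\bigr).
\]
Since $m\geq 1_X$ gives $mg_i\geq g_i$ and $m\o f_i\o\geq f_i\o$, the meets here are \emph{larger} than the target meets $g_i\wedge f_i\o$ and $g_i\o\wedge f_i$, so the displayed inequality does not obviously imply the desired one; the ``disentangling'' you allude to (using the involution axioms and adjoint-preservation of meets) does not go through in the direction needed. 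Your alternative suggestion via a presheaf category is only gestured at. So the backward implication remains genuinely open in your write-up---as, on the face of it, it does in the paper's.
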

\begin{proof}
We continue with the notations introduced before the statement of this Proposition. If we apply the second condition to the family
$$(\bbC(s,x)\:tx\to tx,\bbC(x,s)\:ts\to tx)_{s\in S}$$
then we obtain immediately that $x\in\scl{S}$ whenever $x\in\cl{S}$, so $\cl{S}=\scl{S}$.

Conversely, given the family of morphisms in the second condition, define the $\Q$-category $\bbC$ with object set $\bbC_0=I\uplus\{x\}$, types given by $tx=X$ and $ti=Y_i$, and homs given by
$$\bbC(i,x)=f_i,\ \bbC(x,i)=g_i,\ \bbC(x,x)=1_X,\mbox { and }\bbC(j,i)=\left\{\begin{array}{cl}0_{Y_j,Y_i} & \mbox{ when }i\neq j \\ 1_{Y_i} & \mbox{ when }i=j\end{array}\right..$$
By assumption we must have $\cl{I}=\scl{I}$ for the subset $I\subseteq\bbC_0$, so in particular $x\in\cl{I}$ must imply $x\in\scl{I}$. Spelling this out with the aid of Equations \eqref{25} and \eqref{26} reveals the required formulas.
\end{proof}
In \cite{heystu11}, the notion of a `strongly Cauchy bilateral' quantaloid $\Q$ was introduced as a purely formal stonger version of (``ordinary'') Cauchy bilaterality, because in several examples the stronger version holds, and it is easier to verify. Here now, in the context of closures on $\Q$-categories, we have an explanation for the strong Cauchy bilaterality of $\Q$ as encoding precisely that ``closures can be symmetrised''. (But we do repeat that, for an integral quantaloid, strong Cauchy bilaterality and (`ordinary') Cauchy bilaterality are equivalent.) Whereas the Cauchy bilaterality of an involutive quantaloid $\Q$ implies that there is a distributive law of the Cauchy monad over the symmetrisation comonad on $\Cat(\Q)$ \cite[Corollary 3.9]{heystu11}, we can express strong Cauchy bilaterality of $\Q$ to mean that the functor $\Cat(\Q)\to\Clos$ is invariant under composition with the symmetrisation comonad $(-)\s\:\Cat(\Q)\to\Cat(\Q)$.

\subsection{Groundedness and additivity}\label{gradd}

The final issue we wish to address here in full generality, concerns the topologicity of the closure associated with any $\Q$-category---and this turns out to be a rather subtle point. Recall that a closure is said to be {\it topological} when it is both {\it grounded} (i.e.\ $\cl{\emptyset}=\emptyset$) and {\it additive} (i.e.\ $\cl{S\cup T}=\cl{S}\cup\cl{T}$). Especially when considering (convergence of) sequences in a closure space -- as we shall wish to do in the next section in the case of partial metric spaces -- it is problematic if that closure is non-grounded: for then any sequence converges to every point in $\cl{\emptyset}$.

First, for any $\Q$-category $\bbC$ it is easy to check that $\cl{\emptyset}=\{x\in\bbC_0\mid 1_{tx}=0_{tx}\}$; but for an object $Z$ in $\Q$ we have that $0_Z=1_Z$ if and only if $Z$ is a zero object (both terminal and initial); therefore $\cl{\emptyset}=\emptyset$ if and only $\bbC_0$ has no element whose type is a zero object in $\Q$. Conversely, if $\Q$ has a zero object $Z$, then quite obviously the categorical closure of the $\Q$-category $\1_Z$ does not satisfy $\cl{\emptyset}=\emptyset$. That is to say, the functor $\Cat(\Q)\to\Clos$ of Proposition \ref{27} factors through the full subcategory $\Clos\gd$ of grounded closure spaces if and only if $\Q$ does not have a zero object.
\begin{example}
Any non-trivial quantale -- viewed as a one-object quantaloid -- does not have a zero object, and therefore the categorical closure on such a quantale-enriched category is always grounded. However, every quantaloid of diagonals (our main concern in this paper) has zero objects: indeed, every zero morphism in a quantaloid $\Q$ determines a zero object in $\D(\Q)$. In particular, even when $\Q$ is a non-trivial quantale, $\D(\Q)$ will still have exactly one zero object. The categorical closure on a $\D(\Q)$-enriched category may thus very well be ungrounded---and thus we must be a little bit more careful when studying (convergence of) sequences in such an enriched category. The case that springs to mind is Lawvere's quantale of positive reals, $R=([0,\infty]\op,+,0)$, where $R$-categories (generalised metric spaces, cf.\ Example \ref{d}) have a grounded closure, but $\D(R)$-categories (generalised partial metric spaces, cf. Example \ref{9}) may have an ungrounded closure.
\end{example}
%
%\begin{example}
%The Lawvere quantale of positive reals, $R=([0,\infty]\op,+,0)$, does not have a zero object; as a consequence, the categorical closure on an $R$-category (a generalised metric space, cf.\ Example \ref{d}) is grounded. On the contrary, the quantaloid $\D(R)$ of diagonals in $R$ does have a zero object, namely $\infty$. (Here we have an unfortunate clash of categorical terminology and numeral notation.) Therefore, the categorical closure on a $\D(R)$-category (a generalised partial metric space, cf. Example \ref{9}) is not grounded, and we must be careful when studying sequences in a partial metric space!
%\end{example}
%
However, if $\Q$ does have a (unique\footnote{A similar reasoning holds when $\Q$ has several (necessarily uniquely isomorphic) zero objects, but we shall not need encounter that situation further on; indeed, our main concern is $\Q=\D(R)$.}) zero object $Z$, we can always ``discard'' the elements of type $Z$ from any given $\Q$-category $\bbC$: more precisely, if we define its full subcategories $\bbC\z$ and $\bbC\nz$ to have as elements
$$(\bbC\z)_0=\{x\in\bbC_0\mid tx=Z\}\quad\mbox{ and }\quad(\bbC\nz)_0=\{x\in\bbC_0\mid tx\neq Z\}$$
then $\bbC$ is exactly their categorical sum (coproduct): 
$$\bbC=\bbC\nz+\bbC\z.$$
Because any $\Q$-functor $F\:\bbC\to\bbD$ preserves types, it restricts to elements of non-zero type as $F\nz\:\bbC\nz\to\bbD\nz$. It follows easily that the canonical injection $i\:\bbC\nz\to\bbC$ is the counit for a (strictly) idempotent comonad on $\Cat(\Q)$, whose category of coalgebras $\Cat(\Q)\nz$ is exactly the full coreflective subcategory of those $\Q$-categories that do not have elements of type $Z$:
$$\Cat(\Q)\nz\xymatrix@=8ex{\ar@{^{(}->}@<-0.5ex>@/_1.5ex/[r]\ar@{}[r]|{\top} & \ar@{.>}@<-0.5ex>@/_1.5ex/[l]_{(-)\nz}}\Cat(\Q)$$
Furthermore, if we write $\Q\nz$ for the (smaller) quantaloid obtained from $\Q$ by discarding its zero object $Z$, then $\Cat(\Q)\nz=\Cat(\Q\nz)$ (and the full embedding $\Cat(\Q)\nz=\Cat(\Q\nz)\hookrightarrow\Cat(\Q)$ is actually the change of base determined by the homomorphism $\Q\nz\hookrightarrow\Q$). 
This goes to show that we always have a factorisation
$$\xymatrix@C=2ex@R=8ex{
\Cat(\Q)\nz\ar@{=}[r]&\Cat(\Q\nz)\ar@{.>}[d]\ar@{^{(}->}[rrr] &&& \Cat(\Q)\ar[d] \\
&\Clos\gd\ar@{^{(}->}[rrr] &&& \Clos}$$
The study of (convergence of) sequences of elements in a $\Q$-category $\bbC$  (for the categorical closure) is most useful, not in the whole of $\bbC$, but in its ``non-zero coreflection'' $\bbC\nz$.

In Section \ref{topparmet} we shall consider (convergence and Cauchyness of) sequences in (the non-zero coreflection of) a generalised partial metric space, and we shall want to relate it to the categorical Cauchy completion. To prepare the ground, we make here a few general observations regarding the Cauchy completion of a $\Q$-category $\bbC$ in case the quantaloid $\Q$ has a (unique) zero object $Z$. For any $\Q$-category $\bbC$ there is a unique Cauchy distributor from $\1_Z$ to $\bbC$, namely $\phi\:\1_Z\dist\bbC$ with, for all $x\in\bbC_0$, the $\phi(x)\:Z\to tx$ being the unique element of $\Q(tx,Z)$. In other words, the $\Q$-category $\bbC\cc$ contains exactly one element of type $Z$, which means that 
$$\bbC\cc\cong(\bbC\cc)\nz+\1_Z.$$
On the other hand, a Cauchy presheaf on $\bbC\nz$ {\it as $\Q\nz$-category} is exactly a Cauchy presheaf on $\bbC\nz$ {\it as $\Q$-category whose type is not zero}. That is to say, the following square commutes:
$$\xymatrix@C=2ex@R=8ex{
\Cat(\Q)\ar[rrr]^{(-)\nz}\ar[d]_{(-)\cc} &&& \Cat(\Q\nz)\ar@{=}[r]\ar[d]^{(-)\cc} & \Cat(\Q)\nz \\
\Cat(\Q)\ar[rrr]_{(-)\nz} &&& \Cat(\Q\nz)\ar@{=}[r] & \Cat(\Q)\nz}$$
(where on the right hand side we do the Cauchy completion qua $\Q\nz$-enriched category!). As a consequence, we find:
\begin{proposition}\label{100}
For $\Q$ a quantaloid with a unique zero object $Z$ and $\bbC$ any $\Q$-category, we have that
$$\bbC\cc\cong(\bbC\nz)\cc+\1_Z\mbox{ in }\Cat(\Q)$$
where $\bbC\cc$ is the $\Q$-enriched Cauchy completion of $\bbC$ and $(\bbC\nz)\cc$ is the $\Q\nz$-enriched Cauchy completion of $\bbC\nz$ (whose elements are in fact the $\Q$-enriched Cauchy presheaves on $\bbC\nz$ whose type is not $Z$).
\end{proposition}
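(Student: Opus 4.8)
The plan is to assemble the statement from the two facts established in the paragraphs immediately preceding it, so that the proof reduces to combining an explicit coproduct decomposition of $\bbC\cc$ with an identification of its ``non-zero part'' as a $\Q\nz$-enriched Cauchy completion.

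First I would pin down the objects of $\bbC\cc$ of type $Z$. Since $Z$ is a zero object, both $\Q(Z,A)$ and $\Q(A,Z)$ are singletons for every object $A$ (from $1_Z=0_Z$ one gets $f=f\circ 1_Z=f\circ 0_Z=0$ for every $f\colon Z\to A$, and dually). Hence there is exactly one contravariant presheaf $\phi\colon\1_Z\dist\bbC$, with all components $\phi(x)=0_{Z,tx}$; the distributor axioms hold trivially, its right adjoint is the equally forced $\phi^*\colon\bbC\dist\1_Z$ with components $0_{tx,Z}$ (so $\phi$ is Cauchy), and the full subcategory of $\bbC\cc$ it spans is $\1_Z$, since its single hom is $\bigwedge_{x}\bigl(\phi(x)\se\phi(x)\bigr)=\bigwedge_x 1_Z=1_Z$. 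Thus $(\bbC\cc)\z\cong\1_Z$, and by the general decomposition $\bbD=\bbD\nz+\bbD\z$ applied to $\bbD=\bbC\cc$ we obtain $\bbC\cc\cong(\bbC\cc)\nz+\1_Z$ in $\Cat(\Q)$.

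Next I would identify $(\bbC\cc)\nz$ with $(\bbC\nz)\cc$, the latter taken qua $\Q\nz$-enriched category. The key observation is that $\Q\nz\hookrightarrow\Q$ is a full sub-quantaloid: for objects $A,B\neq Z$ the hom-lattices $\Q\nz(A,B)$ and $\Q(A,B)$ coincide, with the same suprema and the same composition, and deleting $Z$ affects no supremum over a family of morphisms between non-zero objects. Consequently, for $X\neq Z$, a distributor $\phi\colon\1_X\dist\bbC\nz$ qua $\Q$-category is literally the same data as one qua $\Q\nz$-category; the adjointness inequalities $1_X\le\bigvee_{c}\phi^*(c)\circ\phi(c)$ and $\phi(x)\circ\phi^*(y)\le\bbC\nz(x,y)$ involve only morphisms between non-zero objects, hence hold in $\Q$ iff they hold in $\Q\nz$; and the presheaf hom $\P(\bbC\nz)(\psi,\phi)=\psi\se\phi$ is computed identically either way. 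This is exactly the content flagged before the statement, and it says precisely that the comparison functor $(\bbC\cc)\nz\to(\bbC\nz)\cc$ coming from the commuting square displayed just before the Proposition is an isomorphism. Substituting $(\bbC\cc)\nz\cong(\bbC\nz)\cc$ into $\bbC\cc\cong(\bbC\cc)\nz+\1_Z$ yields the claim.

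I expect the only point requiring genuine care — the ``main obstacle'', such as it is — to be the second step: being precise about the change-of-base identification $\Cat(\Q\nz)=\Cat(\Q)\nz$ and checking that it transports the Cauchy-completion KZ-doctrine on $\Cat(\Q\nz)$ to the restriction to non-zero types of the Cauchy-completion doctrine on $\Cat(\Q)$. Once one notes that no empty (nor non-empty) supremum over a family of morphisms is altered by discarding the object $Z$, this is routine; everything else in the argument is bookkeeping with zero morphisms.
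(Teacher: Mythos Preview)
Your proposal is correct and follows essentially the same approach as the paper. In fact the paper gives no explicit proof of this proposition: it is stated as a direct consequence of the two facts established in the paragraphs immediately preceding it (that $\bbC\cc$ has exactly one object of type $Z$, whence $\bbC\cc\cong(\bbC\cc)\nz+\1_Z$; and that the square relating $(-)\nz$ and $(-)\cc$ commutes, whence $(\bbC\cc)\nz\cong(\bbC\nz)\cc$), and you have simply spelled out those two facts with a little more care than the paper does.

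One small remark: your verification of the second step treats Cauchy presheaves on $\bbC\nz$ qua $\Q$-category versus qua $\Q\nz$-category, which is exactly what the paper flags; but strictly speaking one also needs that Cauchy presheaves of non-zero type on $\bbC$ restrict bijectively to Cauchy presheaves on $\bbC\nz$. This is immediate from the same zero-object bookkeeping (components at objects of type $Z$ are forced, and contribute nothing to the adjunction inequality), so it is not a gap so much as a line you might want to add for completeness---the paper leaves it implicit as well.
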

% (Actually, because the full inclusion $i\:\bbC\nz\hookrightarrow\bbC$ is dense, it is true that $\bbC\nz$ and $\bbC$ are Morita-equivalent $\Q$-categories, and therefore have isomorphic Cauchy completions {\it as $\Q$-categories}. But the point here is, that we may discard the elements of type zero in $\bbC$, and compute the ``interesting part'' of $\bbC\cc$ by computing the Cauchy completion of $\bbC\nz$ {\it as $\Q\nz$-category.)

Finally, we end with a comment on the additivity of the categorical closure on $\bbC$. As for any closure, it is always true that $\cl{S}\cup\cl{T}\subseteq\cl{S\cup T}$ for any $S,T\subseteq\bbC_0$, but this inclusion need not be an equality. Indeed, for an $x\in\bbC_0$ we have that
\begin{equation}\label{31}
\begin{array}{rcl}
x\in\cl{S\cup T}& \iff & 1_{tx}\leq\displaystyle\bigvee_{r\in S\cup T}\bbC(x,r)\circ\bbC(r,x)\\
 & \iff & 1_{tx}\leq(\displaystyle\bigvee_{s\in S}\bbC(x,s)\circ\bbC(s,x))\vee(\displaystyle\bigvee_{t\in T}\bbC(x,t)\circ\bbC(t,x)),
 \end{array}
\end{equation}
whereas
\begin{equation}\label{32}
x\in\cl{S}\cup\cl{T}\quad\iff\quad 1_{tx}\leq\bigvee_{s\in S}\bbC(x,s)\circ\bbC(s,x)\quad\mbox{ or }\quad 1_{tx}\leq\bigvee_{t\in T}\bbC(x,t)\circ\bbC(t,x).
\end{equation}
It is now straightforward to identify a sufficient condition for the closure of any $\Q$-category to be topological (i.e.\ grounded and additive), which turns out to be also necessary when $\Q$ is integral). Admittedly this is not the most elegant condition---but it serves our purposes in the upcoming subsections.
\begin{proposition}\label{29}
For any quantaloid $\Q$, if every identity arrow is finitely join-irreducible\footnote{We mean here that, for any object $X$ of $\Q$, if $1_X\leq f_1\vee...\vee f_n$ ($n\in\mathbb{N}$) then $1_X\leq f_i$ for some $i\in\{1,...,n\}$. In other words, $1_X\neq 0_X$ and for any $1_X\leq f\vee g$ we have $1_X\leq f$ or $1_X\leq g$.} then the closure associated to any $\Q$-category $\bbC$ is topological. For any integral quantaloid $\Q$ the converse holds too. 
\end{proposition}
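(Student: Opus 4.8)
The plan is to prove the two directions separately, using the pointwise characterisation of the closure from Proposition~\ref{24}\eqref{bb}, namely $x\in\cl S\iff 1_{tx}\leq\bigvee_{s\in S}\bbC(x,s)\circ\bbC(s,x)$, together with Equations~\eqref{31} and~\eqref{32} for additivity.

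First I would handle the sufficiency direction: assume every identity arrow $1_X$ is finitely join-irreducible in $\Q$. Since $1_X\neq 0_X$ for every object $X$, no type of any object of any $\bbC$ can be a zero object, so by the discussion preceding the proposition $\cl\emptyset=\emptyset$; the closure is grounded. For additivity, it suffices to show $\cl{S\cup T}\subseteq\cl S\cup\cl T$ (the reverse inclusion holds for any closure). Take $x\in\cl{S\cup T}$. By~\eqref{31} we have $1_{tx}\leq\bigl(\bigvee_{s\in S}\bbC(x,s)\circ\bbC(s,x)\bigr)\vee\bigl(\bigvee_{t\in T}\bbC(x,t)\circ\bbC(t,x)\bigr)$, a join of two elements of $\Q(tx,tx)$. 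Finite join-irreducibility of $1_{tx}$ then forces $1_{tx}$ to lie below one of the two joins, which by~\eqref{32} says exactly $x\in\cl S$ or $x\in\cl T$. An obvious induction extends this from two to finitely many sets, so the closure is additive, hence topological.

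For the converse, assume $\Q$ is integral and that the closure on every $\Q$-category is topological; I want to show each $1_X$ is finitely join-irreducible. Groundedness on $\1_X$ already gives $1_X\neq 0_X$ (otherwise $\cl\emptyset\ni *$ in $\1_X$). Suppose now $1_X\leq f\vee g$ with $f,g\:X\to X$ in $\Q$; I must produce a $\Q$-category witnessing that additivity fails unless $1_X\leq f$ or $1_X\leq g$. The natural candidate, modelled on the construction in the proof of Proposition~\ref{28}, is a $\Q$-category $\bbC$ with three objects: a ``centre'' $x$ of type $X$, and two further objects $a,b$ of type $X$, with homs chosen so that $\bbC(x,a)\circ\bbC(a,x)=f$ and $\bbC(x,b)\circ\bbC(b,x)=g$ — for instance $\bbC(a,x)=f$, $\bbC(x,a)=1_X$, $\bbC(b,x)=g$, $\bbC(x,b)=1_X$, all other off-diagonal homs $0_X$, and the diagonal homs equal to $1_X$ (using integrality to check the composition inequalities [C1], since $\bbC(x,a)\circ\bbC(a,x)=f\leq 1_X=\bbC(x,x)$ and so on). Then by~\eqref{25}, $x\in\cl{\{a,b\}}$ because $1_X\leq f\vee g=\bbC(x,a)\circ\bbC(a,x)\vee\bbC(x,b)\circ\bbC(b,x)$, whereas $x\in\cl{\{a\}}\cup\cl{\{b\}}$ would require $1_X\leq f$ or $1_X\leq g$. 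Additivity of the closure on $\bbC$ thus yields $1_X\leq f$ or $1_X\leq g$, as desired.

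The main obstacle I anticipate is purely bookkeeping in the converse: making sure the three-object gadget $\bbC$ actually satisfies [C0]--[C2] (all homs must be endo-arrows on $X$, which is why all objects get type $X$; reflexivity needs $1_X\leq\bbC(x,x)$, taken as equality; transitivity needs the various composites $\bbC(\cdot,\cdot)\circ\bbC(\cdot,\cdot)$ to stay below the target hom, which is where integrality — $1_X$ being the top of $\Q(X,X)$ — does the work), and checking that the relevant composites in $\bbC$ collapse to exactly $f$ and $g$ so that~\eqref{25} and~\eqref{32} give the clean dichotomy. None of this is deep, but it is the only place where one has to be careful; the sufficiency direction is essentially immediate from~\eqref{31}--\eqref{32} once the definition of finite join-irreducibility is unwound.
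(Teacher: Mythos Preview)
Your sufficiency argument is fine and matches the paper's.

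For the converse, however, your three-object category $\bbC$ is \emph{not} a $\Q$-category as written. You set $\bbC(a,b)=\bbC(b,a)=0_X$, but then {\tt [C1]} fails on the path through the centre:
\[
\bbC(a,x)\circ\bbC(x,b)=f\circ 1_X=f,
\]
and you would need $f\leq\bbC(a,b)=0_X$, which is false for any $f\neq 0_X$. (Symmetrically, $\bbC(b,x)\circ\bbC(x,a)=g\not\leq 0_X$.) The case you checked, $\bbC(x,a)\circ\bbC(a,x)=f\leq 1_X$, is correct, but the ``and so on'' hides exactly the case that breaks. Integrality alone cannot save this, since integrality gives upper bounds by $1_X$, not by $0_X$.

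The fix is easy: just set $\bbC(a,b)=f$ and $\bbC(b,a)=g$ (or anything at least that large and at most $1_X$); then all instances of {\tt [C1]} go through by integrality, and your closure computations are unaffected since $a$ and $b$ never appear in the relevant joins. Alternatively, the paper uses a slightly different layout: a ``path'' $x,y,z$ with $\bbC(x,y)=f$, $\bbC(y,z)=g$, $\bbC(x,z)=f\circ g$ and all other homs equal to $1_X$, then tests whether the \emph{middle} object $y$ lies in $\cl{\{x,z\}}$ versus $\cl{\{x\}}\cup\cl{\{z\}}$. That construction avoids any zero homs from the start. Either route works; yours just needs the off-diagonal homs between $a$ and $b$ to be large enough to absorb the composites through $x$.
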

\begin{proof}
For any $\Q$-category $\bbC$ it is easy to check that $\cl{\emptyset}=\{x\in\bbC_0\mid 1_{tx}=0_{tx}\}$; therefore $\cl{\emptyset}=\emptyset$ if and only if none of the identities in $\Q$ is a bottom element. It is furthermore clear from the comparison of \eqref{31} and \eqref{32} that finite join-irreducibility of identities in (any) $\Q$ suffices for closures to be topological. Conversely, and under the extra assumption that $\Q$ is integral, for any $f,g\in\Q(X,X)$ there is a $\Q$-category $\bbC$ with three objects of type $X$, say $x,y,z$, and hom-arrows
$$\bbC(x,y)=f,\ \bbC(y,z)=g,\ \bbC(x,z)=f\circ g,\mbox{ and all others are $1_X$}.$$
It is easy to compute with the formula in Proposition \ref{24}--\ref{bb} that 
$$y\in\cl{\{x,z\}}\iff 1_X\leq f\vee g,\quad y\in\cl{\{x\}}\iff 1_X\leq f\quad\mbox{ and }\quad y\in\cl{\{z\}}\iff 1_X\leq g.$$
Thus, if this closure is topological then $1_X$ must be finitely join-irreducible. 
\end{proof}
If a quantaloid $\Q$ has a (unique) zero object, then it can never satisfy the condition in the above proposition; but removing that zero object from $\Q$ may very well produce a quantaloid $\Q\nz$ that does satisfy the condition above.

\section{Topology from partial metrics}\label{topparmet}

\subsection{Finitely typed partial metric spaces}

From now on we shall apply the previous material to the particular case where the base quantaloid is the quantaloid of diagonals in the (divisible, commutative) Lawvere quantale $R=([0,\infty]\op,+,0)$. As before, we shall write an $R$-enriched category as $(X,d)$, and a $\D(R)$-enriched category as $(X,p)$, to insist on their understanding as generalised (partial) metric spaces---even though we shall of course use the fully general theory of quantaloid-enriched categories where we see fit.

The quantale $R$ has its identity finitely join-irreducible (because the order is linear). The quantaloid $\D(R)$ has a unique zero object, namely $\infty$ (an unfortunate notational clash, due to the reversal of the natural order on $[0,\infty]$), but once we remove this zero object, the resulting quantaloid $\D(R)\nz$ has all its identities finitely join-irreducible (because the local order is linear). We saw in Examples \ref{12} and \ref{13} that $R$ and $\D(R)$ are both strongly Cauchy bilateral; {\it a fortiori} the same is true for the subquantaloid $\D(R)\nz$. 

The categorical closure on a generalised partial metric space $(X,p)$ is, as indicated in the previous section, non-grounded as soon as there exists an $x\in X$ such that $p(x,x)=\infty$. Excluding the points of self-distance\footnote{But we insist that for $x\neq y$ in $X$ it may still happen that $p(x,y)=\infty$.} $\infty$ from $(X,p)$ (that is, those elements which are of type $\infty$ in $(X,p)$ qua $\D(R)$-enriched category), we make sure that the categorical closure on that {\it finitely typed part of $(X,p)$} is topological. 

Restricting our attention now to {\it finitely typed} generalised partial metric spaces -- by which we mean of course those partial metrics such that $p(x,x)<\infty$, so that in effect we consider categories enriched in $\D(R)\nz$ -- we may infer from Propositions \ref{28} and \ref{29} that:
\begin{proposition}\label{30}
The categorical closure on a finitely typed generalised partial metric space $(X,p)$ is topological, and is identical to the closure on the associated symmetric finitely typed generalised partial metric space $(X,p\s)$ (where $p\s(y,x)=p(y,x)\vee p(x,y)$).
\end{proposition}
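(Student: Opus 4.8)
\emph{Plan.} The plan is to observe that a \emph{finitely typed} generalised partial metric space is, by the definition adopted just before the statement, precisely a category enriched in the quantaloid $\D(R)\nz$ obtained from $\D(R)$ by deleting its unique zero object $\infty$, and that the two assertions are then nothing but Propositions~\ref{29} and~\ref{28} applied with $\Q=\D(R)\nz$. So the whole proof reduces to verifying the hypotheses of those two propositions for $\D(R)\nz$, and to identifying the symmetrisation functor $(-)\s$ on $\Cat(\D(R)\nz)$ explicitly.

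For the topologicity of the closure I would check that every identity arrow of $\D(R)\nz$ is finitely join-irreducible. For objects $d,e$ of $\D(R)\nz$, i.e.\ reals $d,e<\infty$, divisibility gives $\D(R)(d,e)=\ \downarrow\!(d\wedge e)$ as a sublattice of the chain $[0,\infty]\op$, so this hom-lattice is itself a chain in which joins are computed as in $[0,\infty]\op$; the identity $1_d=d$ is its top element, and it differs from the bottom $0_d=\infty$ exactly because $d$ is a non-zero object. In a chain with distinct top and bottom the top is finitely join-irreducible, so Proposition~\ref{29} applies and the categorical closure on \emph{any} $\D(R)\nz$-category---in particular on $(X,p)$ and on $(X,p\s)$---is topological. (Note $p\s(x,x)=p(x,x)\vee p(x,x)=p(x,x)<\infty$, so $(X,p\s)$ is again finitely typed, and the statement about it makes sense.)

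For the identification of the two closures, I would first note that $\D(R)$ is symmetric (Example~\ref{13}), so its involution is the identity map on arrows; this restricts to $\D(R)\nz$, making it a full involutive subquantaloid. Since $\D(R)$ is divisible (Proposition~\ref{11.1}) it is integral (Proposition~\ref{10.x}), hence so is the full subquantaloid $\D(R)\nz$; and for an integral involutive quantaloid ordinary and strong Cauchy-bilaterality coincide. As $\D(R)$ is Cauchy-bilateral (Example~\ref{13}) and strong Cauchy-bilaterality passes to full involutive subquantaloids (the relevant quantifier ranges over families of the subquantaloid's own arrows, composed and sup-ed as in the ambient quantaloid), $\D(R)\nz$ is strongly Cauchy-bilateral. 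Proposition~\ref{28} then yields, for every $\D(R)\nz$-category $\bbC$ and every $S\subseteq\bbC_0$, that the closure of $S$ in $\bbC$ equals its closure in $\bbC\s$. Applying this with $\bbC=(X,p)$ and unwinding $\bbC\s(y,x)=\bbC(y,x)\wedge\bbC(x,y)\o=p(y,x)\wedge p(x,y)$, which in the natural order on $[0,\infty]$ is $p(y,x)\vee p(x,y)=p\s(y,x)$, identifies $\bbC\s$ with $(X,p\s)$ and completes the argument.

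The only genuinely delicate point is the passage from ``$\D(R)$ is Cauchy-bilateral'' (what Example~\ref{13} literally proves) to ``$\D(R)\nz$ is \emph{strongly} Cauchy-bilateral'', which rests on the two remarks that integrality collapses the two notions and that strong Cauchy-bilaterality is inherited by full involutive subquantaloids; once these are in place, everything else is routine bookkeeping with the definitions of $\D(R)\nz$ and of $(-)\s$.
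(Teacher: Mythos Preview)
Your proposal is correct and follows exactly the route the paper takes: apply Proposition~\ref{29} to $\D(R)\nz$ (using linearity of the hom-lattices to get finite join-irreducibility of the identities) and Proposition~\ref{28} to $\D(R)\nz$ (using strong Cauchy-bilaterality inherited from $\D(R)$, where integrality collapses the ordinary and strong versions). The paper merely records these hypotheses in the paragraph preceding the proposition and says ``a fortiori'' for the passage to $\D(R)\nz$; you have spelled out precisely why that passage works, which is a nice addition rather than a deviation.
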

Now, for a finitely typed generalised partial metric space $(X,p)$, we find from Proposition \ref{24} that, for any subset $S\subseteq X$ and any $x\in X$,
\begin{equation}\label{30.1}
\begin{array}{rcl}
x\in\cl{S}
 & \iff & p(x,x)\geq\displaystyle\bigwedge_{s\in S}p(x,s)-p(s,s)+p(s,x) \\
 & \iff & 0\geq\displaystyle\bigwedge_{s\in S}p(x,s)-p(s,s)+p(s,x)-p(x,x)
\end{array}
\end{equation}
The expression under the infimum is thus precisely equal to $p_0(x,s)+p_0(x,s)$ for the generalised metric $p_0$ associated with the partial metric $p$ through the change of base $J_0\:\D(R)\to R$ (see below Example \ref{16}). That is to say:
\begin{proposition}\label{33}
The categorical topology on a finitely typed generalised partial metric space $(X,p)$ is identical to the topology on the generalised metric space $(X,p_0)$ (where $p_0(y,x):=p(y,x)-p(x,x)$).
\end{proposition}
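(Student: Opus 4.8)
The plan is to show that the categorical closure operator on $(X,p)$ and the categorical closure operator on $(X,p_0)$ are \emph{literally the same} map on subsets of $X$, and then to note that both are topological, so that they determine the same topology. The closure on the finitely typed $(X,p)$ has already been computed in \eqref{30.1}, so the only real work is to compute the closure on $(X,p_0)$ by the same device (Proposition~\ref{24}) and to match the two formulas.

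First I would record that $p_0$, given by $p_0(y,x)=p(y,x)-p(x,x)$, is a genuine generalised metric, i.e.\ an $R$-enriched category: it is the image of the $\D(R)$-enriched category $(X,p)$ under a change of base along a lax morphism $\D(R)\to R$ (Proposition~\ref{7}; cf.\ the list below Example~\ref{16}), and change of base always yields an honest enriched category (so that, via Proposition~\ref{27}, it even has a well-defined categorical closure). Then, applying Proposition~\ref{24} to the $R$-category $(X,p_0)$ --- keeping in mind that in $R=([0,\infty]\op,+,0)$ the order is the reverse of the natural one, that suprema are computed as infima in $[0,\infty]$, that composition is $+$, and that the identity is $0$ --- one gets, for $S\subseteq X$ and $x\in X$,
$$x\in\cl{S}\ \text{in}\ (X,p_0)\iff 0\geq\bigwedge_{s\in S}\bigl(p_0(x,s)+p_0(s,x)\bigr).$$
On the other hand, regrouping the summands in \eqref{30.1} gives $p(x,s)-p(s,s)+p(s,x)-p(x,x)=p_0(x,s)+p_0(s,x)$, so that $x\in\cl{S}$ in $(X,p)$ is governed by exactly the same inequality. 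Hence the two closure operators agree on every subset $S\subseteq X$.

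Finally I would conclude as follows. By Proposition~\ref{30} the closure on the finitely typed $(X,p)$ is a topological closure; and by Proposition~\ref{29} applied to the quantale $R$ --- whose identity $0$ is finitely join-irreducible because the order on $[0,\infty]$ is linear --- the closure on the generalised metric space $(X,p_0)$ is topological too. Two topological closure operators that coincide as operators have the same fixed points, hence the same closed sets, hence define the same topology, which is the claim. I expect no real obstacle here: the content is entirely in the bookkeeping of the order reversal in $R$ when reading off Proposition~\ref{24}, together with the trivial algebraic identity $p(x,s)-p(s,s)+p(s,x)-p(x,x)=p_0(x,s)+p_0(s,x)$ --- the whole point being that the triangle-type expression appearing in \eqref{30.1} is already ``additive'' in the $p_0$-terms.
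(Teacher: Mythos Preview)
Your proposal is correct and follows essentially the same approach as the paper: compute the closure condition for $(X,p)$ via Proposition~\ref{24} (this is~\eqref{30.1}), recognise the summand $p(x,s)-p(s,s)+p(s,x)-p(x,x)$ as $p_0(x,s)+p_0(s,x)$, and observe that this is precisely the closure condition for the $R$-category $(X,p_0)$. You are somewhat more explicit than the paper in justifying that $p_0$ is a genuine generalised metric (via the change of base $J_0$) and in invoking Proposition~\ref{29} for the topologicity on the $p_0$ side, but the argument is the same.
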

Putting both previous Propositions together, we can conclude that the categorical topology on a finitely typed generalised partial metric space {\it is always metrisable by means of a symmetric generalised metric}. And for such a {\it symmetric} generalised metric space $(X,d)$, Proposition \ref{24} says that
$$\begin{array}{rcl}
x\in\cl{S}
 & \iff & 0\geq\displaystyle\bigwedge_{s\in S} d(x,s)+d(s,x) \\
 & \iff & 0\geq\displaystyle\bigwedge_{s\in S} 2\cdot d(x,s) \\
 & \iff & 0\geq\displaystyle\bigwedge_{s\in S} d(x,s) \\
 & \iff & \forall\varepsilon>0\ \exists s\in S:d(x,s)<\varepsilon 
\end{array}$$
Thus the categorical topology on $(X,d)$ is exactly the usual metric topology---with a basis given by the collection of open balls
$$\Big\{B(x,\varepsilon):=\{y\in X\mid d(x,y)<\varepsilon\}\ \rule[-6pt]{0.75pt}{18pt}\ x\in X,\varepsilon>0\Big\},$$
with its usual notion of convergent sequences, etc. 

One could consider this a disappointment: there are not more ``partially metrisable topologies'' then there are metrisable ones. Still, one must realise that it is not always trivial to interpret topological and/or metric phenomena in a given finitely typed partial metric $(X,p)$ by passing to some metric $(X,d)$ which just happens to define the same topology. The next subsection is entirely devoted to the study of convergent sequences in finitely typed partial metrics.

\subsection{Convergence \dots}

One fundamental use of topology is its inherent notion of convergence for sequences: $(x_n)_n\to x$ in a topological space $(X,\T)$ when for every $x\in U\in \T$ there exists an $n_0$ such that $x_n\in U$ for every $n\geq n_0$. When the topology stems from a symmetric generalised metric $d$ on $X$, it is sufficient to consider open balls centered in $x$, and so 
$$(x_n)_n\to x\iff\forall\varepsilon>0\ \exists n_0\ \forall n\geq n_0:d(x_n,x)<\varepsilon.$$
A convergent sequence is necessarily a Cauchy sequence, meaning that
$$\forall\varepsilon>0\ \exists n_0\ \forall m,n\geq n_0:d(x_n,x_m)<\varepsilon,$$
and a symmetric generalised metric space is said to be (sequentially) Cauchy complete precisely when every Cauchy sequence converges. But note that the definition of Cauchy sequence is symmetric in $x_n$ and $x_m$ {\it even when the generalised metric $d$ is not symmetric}---and so it makes perfect sense for {\it any} generalised metric space. As recalled in Example \ref{l}, Lawvere \cite{law72} proved that a generalised metric space $(X,d)$ is sequentially Cauchy complete if and only if every left adjoint distributor into $(X,d)$ (now viewed as an $R$-enriched category) is representable.

Now consider a finitely typed generalised partial metric space $(X,p)$; its categorical topology is equivalently described by the symmetric generalised metric 
\begin{equation}\label{34}
(p_0)\s(y,x)=p_0(y,x)\vee p_0(x,y)=(p(y,x)-p(x,x))\vee(p(x,y)-p(y,y)),
\end{equation}
and therefore a sequence $(x_n)_n$ in $(X,p)$ converges to $x\in X$ precisely when
\begin{equation}\label{34.0}
\forall\varepsilon>0\ \exists n_0\ \forall n\geq n_0:(p(x,x_n)-p(x_n,x_n))\vee(p(x_n,x)-p(x,x))<\varepsilon.
\end{equation}
In what follows we shall first try to improve on our understanding of this formula, and thus of convergence in $(X,p)$, before we look at Cauchy sequences and completion.

For any quantaloid $\Q$, the terminal object $\bbT$ in $\Cat(\Q)$ (exists and) has the following description: its object set is $\bbT_0=\Q_0$, the type function is the identity, and the hom function is $\bbT_0\times\bbT_0\to\Q_1\:(Y,X)\mapsto\top_{X,Y}$ (where $\top_{X,Y}$ is the top element in $\Q(X,Y)$). The unique functor from a $\Q$-category $\bbC$ to $\bbT$ is $\bbC_0\to\bbT_0\:x\mapsto tx$, that is, it is $\bbC$'s type function. From Proposition \ref{27} we deduce that the type function of a $\Q$-category is continuous---but, of course, the use of this statement depends on the categorical topology of $\bbT$. If we work over the quantaloid $\D(R)\nz$ (so that $\Cat(\D(R)\nz)=\Cat(\D(R))\nz$ is exactly the category of {\it finitely typed} partial metric spaces), then things are as follows:
\begin{proposition}\label{34.1}
The terminal finitely typed generalised partial metric space $(T,p)$ is defined by $T=[0,\infty[$ and $p(a,b)=a\vee b$; its categorical topology is the usual metric topology.
\end{proposition}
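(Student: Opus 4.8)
The plan is to prove the two assertions in turn: first pin down the terminal object of $\Cat(\D(R)\nz)$ explicitly, then read off its categorical topology from the formula for $\cl{(\cdot)}$. For the identification of $\bbT$, I would apply the general description of the terminal $\Q$-category recalled just before the statement, to $\Q:=\D(R)\nz$. By Proposition \ref{1.1} the objects of $\D(R)$ are the morphisms of $R$, i.e.\ the elements of $[0,\infty]$; deleting the unique zero object $\infty$ of $\D(R)$ leaves $\Q_0=[0,\infty[=T$, and the type function of $\bbT$ is the identity. For the hom-arrows I would use that $R$ is divisible (Example \ref{13}), so by Definition \ref{10}(\ref{10.5}) the hom-lattice of $\D(R)$ is $\D(R)(a,b)=\ \downarrow\!(a\wedge b)$, whose top element is $a\wedge b$ itself; here the one delicate point is that the sup-lattice order on $R(\ast,\ast)$ is the \emph{reverse} of the usual order on $[0,\infty]$, so the quantaloidal meet $\wedge$ is the natural maximum and $\top_{a,b}=a\vee b$. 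Thus $\bbT$, read as a partial metric space via Example \ref{16}, is exactly $(T,p)$ with $p(b,a)=\top_{a,b}=a\vee b$ (manifestly symmetric), which is the first claim. One could, if desired, double-check against the axioms in Example \ref{16} that $(T,p)$ really is a finitely typed generalised partial metric space, but this is automatic from terminality.

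With $p(a,b)=a\vee b$ in hand, I would feed this into the closure formula \eqref{30.1}. The key computation is that, for $x,s\in T$,
$$p(x,s)-p(s,s)+p(s,x)-p(x,x)=2(x\vee s)-x-s=\bigl((x\vee s)-x\bigr)+\bigl((x\vee s)-s\bigr)=|x-s|,$$
so that \eqref{30.1} reads $x\in\cl S\iff\inf_{s\in S}|x-s|=0$, i.e.\ $\cl S$ is precisely the ordinary Euclidean subspace closure of $S\subseteq[0,\infty[$. Alternatively---and more in the spirit of this section---one reaches the same conclusion via Proposition \ref{33}: the categorical topology of $(T,p)$ is that of the generalised metric $p_0(y,x)=p(y,x)-p(x,x)=\max\{0,y-x\}$, whose symmetrisation is $(p_0)\s(y,x)=|x-y|$ (cf.\ \eqref{34}), and this symmetric metric defines the same topology because $R$ is strongly Cauchy bilateral (Example \ref{13}, Proposition \ref{28}); either route lands on the standard metric topology of $|x-y|$ on $[0,\infty[$.

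There is no conceptual difficulty in any of this: the only thing that really requires care is the bookkeeping with the two opposite orders on $[0,\infty]$---getting ``top of the hom-lattice'' to come out as the natural \emph{maximum} rather than the minimum, and remembering that the zero object of $\D(R)$ that is removed in passing to $\D(R)\nz$ is $\infty$, not $0$. Everything else is direct substitution into results already established in the paper.
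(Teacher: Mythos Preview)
Your proposal is correct and follows essentially the same approach as the paper: identify $(T,p)$ by instantiating the general terminal-$\Q$-category recipe at $\Q=\D(R)\nz$ (carefully tracking the order reversal so that $\top_{a,b}=a\vee b$), then pass via Proposition~\ref{33} to the symmetrised generalised metric $(p_0)\s(a,b)=|a-b|$. Your additional direct computation through \eqref{30.1} is a nice sanity check but not a different route.
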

\begin{proof}
The general construction of the terminal $\D(R)\nz$-category $\bbT$ -- which we henceforth write as a generalised partial metric space $(T,p)$ -- says that 
$$T=\mbox{objects of }\D(R)\nz=[0,\infty[\mbox{ and }p(a,b)=\mbox{top element of }\D(R)\nz(a,b)=a\vee b.$$
From the above discussion, the categorical topology on the partial generalised metric $(T,p)$ is equivalently described by the (``total'') generalised metric $(T,p_0)$, which in turn is equivalently described by its symmetrisation $(X,(p_0)\s)$. A simple computation leads to
$$(p_0)\s(a,b)=p_0(a,b)\vee p_0(b,a)=((a\vee b)-a)\vee(a\vee b)-b)=|a-b|.$$
That is to say, the categorical topology on $(T,p)$ (qua partial metric space) is precisely the usual metric topology.
\end{proof}
\begin{corollary}\label{35}
For any finitely typed generalised partial metric space $(X,p)$, equipped with its categorical topology,
\begin{enumerate}
\item the function $X\to[0,\infty[\:x\mapsto p(x,x)$ is continuous (for the usual topology on $[0,\infty[$),
\item if $(x_n)_n\to x$ in $(X,p)$ then $\lim_{n\to\infty}p(x_n,x_n)=p(x,x)$.
\end{enumerate}
\end{corollary}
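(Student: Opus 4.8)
The plan is to deduce both statements from the single observation, made in the discussion preceding Proposition~\ref{34.1}, that the assignment $x\mapsto p(x,x)$ is exactly the type function of $(X,p)$ regarded as a $\D(R)\nz$-enriched category, i.e.\ the unique $\D(R)\nz$-functor from $(X,p)$ to the terminal object $\bbT$. Indeed, by Proposition~\ref{15} and Example~\ref{16} the type of an object $x$ of a finitely typed generalised partial metric space is precisely $p(x,x)$, an element of the object set $[0,\infty[$ of $\D(R)\nz$; and by Proposition~\ref{34.1} the terminal object $\bbT$ is the space $(T,p)$ with $T=[0,\infty[$ and $p(a,b)=a\vee b$, the unique functor $(X,p)\to\bbT$ sending $x$ to $tx=p(x,x)$.

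For part (1) I would then invoke Proposition~\ref{27}: every $\Q$-functor is continuous for the associated categorical closures. Applying this to the type functor $(X,p)\to(T,p)$, and using that the closure on a finitely typed generalised partial metric space is topological (Proposition~\ref{30}) while the closure on $(T,p)$ is topological and coincides with the usual metric topology on $[0,\infty[$ (Proposition~\ref{34.1}), we conclude that $x\mapsto p(x,x)$ is a continuous map from $(X,p)$ with its categorical topology to $[0,\infty[$ with its usual topology, which is exactly statement (1).

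Part (2) is then immediate, since a continuous map between topological spaces is sequentially continuous: from $(x_n)_n\to x$ for the categorical topology on $(X,p)$ we get $p(x_n,x_n)\to p(x,x)$ in $[0,\infty[$, i.e.\ $\lim_{n\to\infty}p(x_n,x_n)=p(x,x)$. Alternatively, one can unwind the convergence criterion \eqref{34.0} by hand: for $n$ large enough one has $p(x,x_n)-p(x_n,x_n)<\varepsilon$ and $p(x_n,x)-p(x,x)<\varepsilon$, and since the generalised partial metric axiom (Example~\ref{16}) gives $p(x,x_n)\geq p(x,x)$ and $p(x_n,x)\geq p(x_n,x_n)$, these two inequalities yield respectively $p(x,x)-p(x_n,x_n)<\varepsilon$ and $p(x_n,x_n)-p(x,x)<\varepsilon$, hence $|p(x_n,x_n)-p(x,x)|<\varepsilon$; note that finite-typedness is what prevents the subtracted quantities from being $\infty$.

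The only point that requires any care is the passage from ``continuous as a morphism of closure spaces'' (which is literally what Proposition~\ref{27} provides) to ``continuous as a map of topological spaces'', and this is precisely why Propositions~\ref{30} and~\ref{34.1} are needed as inputs: they guarantee that the closures at both ends are genuinely topological. Once that is in place there is no remaining content, so I do not anticipate any real obstacle here.
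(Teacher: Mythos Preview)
Your proof is correct and follows exactly the approach the paper intends: the corollary is stated without proof precisely because it is meant to follow from the paragraph preceding Proposition~\ref{34.1} (the type function is the unique functor to the terminal object, hence continuous by Proposition~\ref{27}) together with Proposition~\ref{34.1} (identifying the categorical topology on the terminal object with the usual one on $[0,\infty[$). Your care in invoking Proposition~\ref{30} to justify passing from closure-space continuity to topological continuity, and your alternative elementary derivation of part~(2) from \eqref{34.0}, are both sound and make explicit what the paper leaves to the reader.
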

We can now prove a practical characterisation of convergence in a finitely typed partial metric space, which subsumes the definition of convergence from \cite{bukatinetal09} (in the case $p$ is symmetric, separated and never takes the value $\infty$) and improves the one given in \cite{puzha12} (in that it eliminates all double limits\footnote{Precisely, in \cite{puzha12} it is required that $\lim_{m,n\to\infty}p(x_m,x_n)=p(x,x)$ instead of our $\lim_{n\to\infty}p(x_n,x_n)=p(x,x)$. Conceptually, our simple limit expresses that the type of $x_n$ should converge to the type of $x$ (but nothing more); therefore our notion of convergence is directly applicable to the typed sequences of Definition \ref{37.0} further on.}):
\begin{proposition}\label{36}
In a finitely typed generalised partial metric space $(X,p)$, equipped with its categorical topology, we have a convergent sequence $(x_n)_n\to x$ if and only if all three limits 
$$\lim_{n\to\infty}p(x,x_n),\ \lim_{n\to\infty}p(x_n,x_n)\mbox{ and }\lim_{n\to\infty}p(x_n,x)$$
(exist and) are equal to $p(x,x)$.
\end{proposition}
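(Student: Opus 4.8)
The plan is to unwind the definition of convergence recorded in \eqref{34.0} and reduce the statement to an elementary $\limsup/\liminf$ argument about sequences of non-negative reals; somewhat surprisingly, no triangular inequality is needed here — only the ``partiality'' form of axiom [P2], namely $p(y,x)\geq p(x,x)\vee p(y,y)$ as it appears in Example~\ref{16}. Abbreviate $a_n:=p(x,x_n)$, $b_n:=p(x_n,x_n)$, $c_n:=p(x_n,x)$ and $d:=p(x,x)$. First I would record the standing consequences of [P2] for these points: $a_n\geq b_n$, $a_n\geq d$, $c_n\geq b_n$ and $c_n\geq d$ for every $n$. Since $(X,p)$ is finitely typed we have $b_n<\infty$ and $d<\infty$, and \eqref{34.0} says that $(x_n)_n\to x$ is equivalent to $(a_n-b_n)\vee(c_n-d)\to 0$, hence (both terms being $\geq0$) to the conjunction $a_n-b_n\to 0$ and $c_n-d\to 0$. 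In particular convergence forces $a_n<\infty$ and $c_n<\infty$ for all large $n$, so from that index onwards every truncated subtraction is an honest real subtraction and ordinary limit arithmetic applies.

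The implication $(\Leftarrow)$ is then immediate: if $a_n\to d$, $b_n\to d$ and $c_n\to d$ (all finite), then $a_n-b_n\to0$ and $c_n-d\to0$, so $(a_n-b_n)\vee(c_n-d)\to0$, i.e.\ $(x_n)_n\to x$. For $(\Rightarrow)$, assume $(x_n)_n\to x$, so $a_n-b_n\to0$ and, using $c_n\geq d$, also $c_n\to d$. From $c_n\geq b_n$ we get $\limsup_n b_n\leq\lim_n c_n=d$; since $a_n-b_n\to0$ and $b_n$ is eventually finite, $\limsup_n a_n=\limsup_n b_n\leq d$; and since $a_n\geq d$ we have $\liminf_n a_n\geq d$. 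Therefore $a_n\to d$, whence $b_n=a_n-(a_n-b_n)\to d$, and $c_n\to d$ is already known — so all three limits equal $p(x,x)$, as claimed.

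The argument is essentially just bookkeeping, and the only two points deserving care are the ones above: one must make the passage ``convergence $\Rightarrow$ eventually $a_n,c_n<\infty$'' explicit before invoking $\limsup/\liminf$, because the subtraction in \eqref{34.0} is the truncated one on $[0,\infty]$; and one should note that it is precisely the \emph{partial} strengthening $p(y,x)\geq p(x,x)\vee p(y,y)$ of [P2] (not merely $p(x,y)\geq p(x,x)$) that yields $c_n\geq b_n$, the single inequality that pins $\limsup_n b_n$ down to $d$ and makes the whole thing work.
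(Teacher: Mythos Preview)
Your argument is correct, and it is genuinely different from the paper's. The paper derives the middle limit $\lim_n p(x_n,x_n)=p(x,x)$ by invoking Corollary~\ref{35}, i.e.\ the continuity of the type map $x\mapsto p(x,x)$, which in turn rests on the identification of the terminal object in $\Cat(\D(R)\nz)$ (Proposition~\ref{34.1}) and the general fact that $\Q$-functors are continuous for the categorical closure (Proposition~\ref{27}); only after that does it combine $|p(x,x_n)-p(x_n,x_n)|<\varepsilon$ with $|p(x_n,x_n)-p(x,x)|<\varepsilon$ via the real triangle inequality to obtain $\lim_n p(x,x_n)=p(x,x)$. You instead extract the middle limit purely from the pointwise inequalities $c_n\geq b_n$ and $a_n\geq d$ (both instances of the partiality axiom $p(y,x)\geq p(x,x)\vee p(y,y)$) together with $a_n-b_n\to 0$, via the $\limsup$/$\liminf$ sandwich; this avoids any appeal to Corollary~\ref{35} and, as you observe, to the triangular inequality for $p$. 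The trade-off is that the paper's route explains \emph{why} the self-distance converges (continuity of the type functor into the terminal partial metric), whereas yours is shorter and entirely self-contained at the level of real sequences.
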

\begin{proof}
Suppose first that $(x_n)_n\to x$ in $(X,p)$. Because $p(x,x_n)\wedge p(x_n,x)\geq p(x,x)\vee p(x_n,x_n)$, the expression in \eqref{34.0} is equivalent to
$$\forall\varepsilon>0\ \exists n_0\ \forall n\geq n_0:\ 
\left\{\begin{array}{l}
|p(x,x_n)-p(x_n,x_n)|<\varepsilon \\
|p(x_n,x)-p(x,x)|<\varepsilon
\end{array}\right.$$
and so in particular $\lim_{n\to\infty}p(x_n,x)=p(x,x)$. Corollary \ref{35} assures that $\lim_{n\to\infty}p(x_n,x_n)=p(x,x)$, that is,
$$\forall\varepsilon>0\ \exists n_1\ \forall n\geq n_1:\ |p(x_n,x_n)-p(x,x)|<\varepsilon,$$
and so for any $n\geq n_0\vee n_1$ also
$$|p(x,x_n)-p(x,x)|\leq|p(x,x_n)-p(x_n,x_n)|+|p(x_n,x_n)-p(x,x)|<2\varepsilon.$$
Therefore $\lim_{n\to\infty}p(x,x_n)=p(x,x)$ too. Hence we proved the necessity of the three limits.

Conversely, knowing that $\lim_{n\to\infty}p(x,x_n)=p(x,x)=\lim_{n\to\infty}p(x_n,x_n)$, we find also
$$0\leq|p(x,x_n)-p(x_n,x_n)|\leq|p(x,x_n)-p(x,x)|+|p(x,x)-p(x_n,x_n)|,$$
whence $\lim_{n\to\infty}(p(x,x_n)-p(x_n,x_n))=0$. Together with $\lim_{n\to\infty}p(x_n,x)=p(x,x)$ this shows the sufficiency of the three limits.
\end{proof}
The middle limit in the above proposition is crucial, as the following example indicates:
\begin{example}\label{36.1}
%%% This example is not finitely typed:
%In the terminal partial metric space from Proposition \ref{34.1}, any sequence $(x_n)_n$ satisfies $\lim_{n%\to\infty}p(x_n,\infty)=\infty=\lim_{n\to\infty}p(\infty,x_n)$, but it would be against all intuition to %have every sequence $(x_n)_n$ converging to $\infty$! Requiring that $\lim_{n\to\infty}p(x_n,x_n)=\infty$ %too, precisely excludes such pathological behaviour.
%%%
For $A$ a (non-empty) finite alphabet, let $X$ be the union of all non-empty words and all sequences in that alphabet: it is a finitely typed generalised partial metric space if we put $p(x,y)=(\frac{1}{2})^k$ where $k$ is the position of the first letter in which $x$ and $y$ do not agree. Now consider a sequence $(x_n)_n$ with $x_0\in A$ and each $x_{n+1}$ is equal to $x_n$ concatenated with one extra letter: we then have that $\lim_{n\to\infty}p(x_0,x_n)=p(x_0,x_0)=\lim_{n\to\infty}p(x_n,x_0)$, but it is against all intuition to say that $(x_n)_n$ converges to $x_0$! Precisely because $\lim_{n\to\infty}p(x_n,x_n)\neq p(x_0,x_0)$ this pathological behaviour is excluded.
\end{example}
Note that Proposition \ref{36} contains the usual convergence criterion in an ordinary metric space, where we would have $p(x,x)=0=p(x_n,x_n)$ and $p(x,x_n)=p(x_n,x)$. 

\subsection{\dots\ and completeness}

We now turn to the study of Cauchy sequences in, and completion of, finitely typed partial metric spaces (for the categorical topology). 

Recall that a finitely typed generalised partial metric space $(X,p)$ is a $\D(R)\nz$-category $\bbX$ with object set $\bbX_0=X$, type function $tx=p(x,x)$ and hom-arrows $\bbX(y,x)=p(y,x)$. The crucial rôle of the type function as ``indicator of partialness'' was already apparent in the previous subsection. To facilitate our discussion of sequences in $(X,p)$ we find it useful to introduce some further terminology:
\begin{definition}\label{37.0}
A sequence $(x_n)_n$ in a finitely typed generalised partial metric space $(X,p)$ is \textbf{typed} whenever $\lim_{n\to\infty}p(x_n,x_n)$ exists in $[0,\infty[$; that limit is then called the type of $(x_n)_n$.
\end{definition}
Because we only consider sequences in a {\it finitely typed} $(X,p)$ (for the reasons explained in Subsection \ref{gradd}), any typed sequence is in fact of finite type too.
\begin{lemma}\label{37.1}
For any finitely typed generalised partial metric space $(X,p)$, the following defines an equivalence relation on the set of all typed sequences in $(X,p)$:
$$(x_n)_n\sim (y_n)_n \ \stackrel{\rm def.}{\Longleftrightarrow} \ \lim_{n\to\infty}p(x_n,y_n)=\lim_{n\to\infty}p(x_n,x_n)=\lim_{n\to\infty}p(y_n,y_n)=\lim_{n\to\infty}p(y_n,x_n).$$
\end{lemma}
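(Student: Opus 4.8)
The plan is to verify the three defining properties of an equivalence relation in turn; reflexivity and symmetry are immediate, and essentially all the work sits in transitivity.

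For reflexivity, I would note that since $(x_n)_n$ is a typed sequence the limit $\lim_{n\to\infty}p(x_n,x_n)$ exists in $[0,\infty[$ by Definition \ref{37.0}; taking $(y_n)_n=(x_n)_n$, all four limits occurring in the defining condition reduce to this one number, so $(x_n)_n\sim(x_n)_n$. For symmetry, I would simply observe that the condition defining $(x_n)_n\sim(y_n)_n$ is literally invariant under interchanging the roles of $(x_n)_n$ and $(y_n)_n$ (it merely permutes the four equated limits), so $(x_n)_n\sim(y_n)_n$ implies $(y_n)_n\sim(x_n)_n$.

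For transitivity, suppose $(x_n)_n\sim(y_n)_n$ and $(y_n)_n\sim(z_n)_n$, and put $a:=\lim_{n\to\infty}p(x_n,x_n)$. Unwinding the two hypotheses gives $\lim_n p(y_n,y_n)=\lim_n p(z_n,z_n)=a$ as well, and moreover $\lim_n p(x_n,y_n)=\lim_n p(y_n,x_n)=\lim_n p(y_n,z_n)=\lim_n p(z_n,y_n)=a$. I would then apply the partial-metric triangle inequality from Example \ref{16}, namely $p(x_n,y_n)-p(y_n,y_n)+p(y_n,z_n)\geq p(x_n,z_n)$: since each of the three limits on the left equals the finite number $a$, the left-hand side is eventually finite (so the truncated subtraction is genuine subtraction) and converges to $a-a+a=a$, whence $\limsup_n p(x_n,z_n)\leq a$. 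For the reverse bound I would use the first partial-metric axiom $p(x_n,z_n)\geq p(x_n,x_n)\vee p(z_n,z_n)\geq p(x_n,x_n)$, giving $\liminf_n p(x_n,z_n)\geq a$; combining the two yields $\lim_n p(x_n,z_n)=a$. Running the same argument with the roles of $(x_n)_n$ and $(z_n)_n$ exchanged (equivalently, invoking the symmetry just proved) gives $\lim_n p(z_n,x_n)=a$, so that $\lim_n p(x_n,z_n)=\lim_n p(x_n,x_n)=\lim_n p(z_n,z_n)=\lim_n p(z_n,x_n)=a$, i.e.\ $(x_n)_n\sim(z_n)_n$.

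The only point requiring genuine care is the bookkeeping with the value $\infty$ in the transitivity step: one must observe that the triangle-inequality bound already forces $p(x_n,z_n)$ to be eventually finite, and that all off-diagonal distances appearing in the computation are eventually finite (being squeezed near $a<\infty$), so that truncated subtraction in Lawvere's quantale agrees with ordinary subtraction of reals and the $\limsup$/$\liminf$ sandwich is legitimate. Everything else is routine manipulation of limits of real sequences.
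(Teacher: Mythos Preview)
Your proof is correct and follows essentially the same route as the paper: reflexivity and symmetry are dismissed as obvious, and transitivity is obtained by sandwiching $p(x_n,z_n)$ between $p(x_n,x_n)\vee p(z_n,z_n)$ below and $p(x_n,y_n)-p(y_n,y_n)+p(y_n,z_n)$ above. Your use of $\limsup/\liminf$ (and your remark on eventual finiteness) is a touch more explicit than the paper's squeeze argument, but the content is the same.
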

\begin{proof}
Reflexivity and symmetry are obvious. If $(x_n)_n\sim(y_n)_n$ and $(y_n)_n\sim(z_n)_n$ then all three must have the same type, say $q$, and since both extremes of the double inequality 
$$p(x_n,x_n)\vee p(z_n,z_n)\leq p(x_n,z_n)\leq p(x_n,y_n)-p(y_n,y_n)+p(y_n,z_n)$$ 
converge to $q$ when $n$ goes to $\infty$, so does the middle term. Similarly, $\lim_{n\to\infty}p(z_n,x_n)=q$.
\end{proof}
Let us stress that the equivalence relation only pertains to typed sequences, and that equivalent sequences necessarily have the same type. As was also done in \cite{mat94}, we furthermore define:
\begin{definition}\label{37}
A sequence $(x_n)_n$ in a finitely typed generalised partial metric space $(X,p)$ is {\bf Cauchy} if $(p(x_n,x_m))_{(n,m)}$ is a Cauchy net in $[0,\infty]$.
\end{definition}
Here we regard $[0,\infty]$ canonically as a generalised metric space: $d(a,b)=\max(a-b,0)$. By our general considerations, its categorical topology is metrisable by the symmetric distance function
$$d_s(a,b)=d(a,b)\vee d(b,a)=\left\{\begin{array}{l} 0\mbox{ if }a=\infty=b \\ |a-b|\mbox{ if }a\neq\infty\neq b \\ \infty\mbox{ otherwise}\end{array}\right.$$
If a net $(a_{(m,n)})_{(m,n)}$ is Cauchy in $[0,\infty]$ then it is either eventually constant $\infty$ or eventually finite. Since $(X,p)$ is finitely typed by assumption, the former cannot happen for $a_{(m,n)}=p(x_n,x_m)$, so every such Cauchy net lies eventually\footnote{Thus this notion of Cauchyness narrows down to the one in \cite{bukatinetal09,mat94} which is only concerned with partial metrics satisfying $p(x,y)<\infty$ for all $x,y\in X$.} in $[0,\infty[$. As this is a complete space, this implies that the Cauchy net $(p(x_n,x_m))_{(n,m)}$ converges to the ``double'' limit $\lim_{m,n\to\infty}p(x_n,x_m)$ in the usual sense (see \cite{kel55} for details on sequences and nets). 

The following results use the equivalence relation on typed sequences to express the expected interplay between convergent sequences and Cauchy sequences in partial metric spaces:
\begin{proposition}\label{38}
In a finitely typed generalised partial metric space $(X,p)$,
\begin{enumerate}
\item any constant sequence $(x)_n$ is typed, with type $p(x,x)$,
\item $(x_n)_n\to x$ if and only if $(x_n)_n$ (is typed and) $(x_n)_n\sim(x)_n$,
\item if $(x_n)_n\to x$ and $(y_n)_n$ is typed, then $(x_n)_n\sim (y_n)_n$ if and only if $(y_n)_n\to x$,
\item any Cauchy sequence $(x_n)_n$ is typed, with type $\lim_{m,n\to\infty}p(x_n,x_m)$,
\item if $(x_n)_n\sim(y_n)_n$ then either one is Cauchy if and only if the other one is too,
\item every convergent sequence is Cauchy.
\end{enumerate}
\end{proposition}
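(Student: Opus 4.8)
The plan is to dispose of items 1 and 4 directly, to read off item 2 from Proposition \ref{36}, and then to obtain items 3, 5 and 6 as formal consequences of items 1, 2, 4 together with Lemma \ref{37.1} and the partial-metric triangle inequality; I would carry the six items out in the order 1, 4, 2, 5, 3, 6.

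Item 1 is immediate: the net $(p(x,x))_n$ is constant, hence converges to $p(x,x)$. For item 4, recall from the discussion preceding Definition \ref{37} that if $(x_n)_n$ is Cauchy then the net $(p(x_n,x_m))_{(n,m)}$ lies eventually in the complete space $[0,\infty[$, and so converges to $L:=\lim_{m,n\to\infty}p(x_n,x_m)$. Since the diagonal $\{(n,n)\mid n\in\mathbb{N}\}$ is a cofinal directed subset of $\mathbb{N}\times\mathbb{N}$, the restricted net $(p(x_n,x_n))_n$ is a subnet of $(p(x_n,x_m))_{(n,m)}$ and therefore converges to the same limit $L$; thus $(x_n)_n$ is typed with type $L$.

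For item 2, I would unfold the relation $\sim$ of Lemma \ref{37.1} with the constant sequence $(y_n)_n=(x)_n$ (legitimate once $(x_n)_n$ is known to be typed, as in the statement, and using item 1 to see $(x)_n$ is typed): the relation $(x_n)_n\sim(x)_n$ asserts precisely that $\lim_n p(x_n,x)$, $\lim_n p(x_n,x_n)$ and $\lim_n p(x,x_n)$ all exist and all equal $\lim_n p(x,x)=p(x,x)$, which is word for word the criterion of Proposition \ref{36} for $(x_n)_n\to x$. Item 3 then follows formally: a convergent sequence is typed (by item 2 it is $\sim(x)_n$, and being $\sim$-related forces typedness), so all three of $(x_n)_n$, $(y_n)_n$, $(x)_n$ are typed and Lemma \ref{37.1} applies; if $(x_n)_n\to x$, then $(x_n)_n\sim(x)_n$ by item 2, whence by symmetry and transitivity of $\sim$ one gets $(x_n)_n\sim(y_n)_n\iff(y_n)_n\sim(x)_n\iff(y_n)_n\to x$.

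The substantial step is item 5. Assume $(x_n)_n\sim(y_n)_n$, with common type $q$, and suppose $(x_n)_n$ is Cauchy, so $p(x_n,x_m)\to q$ by item 4. Applying the partial-metric triangle inequality $p(z,x)\le p(z,y)-p(y,y)+p(y,x)$ twice — first with $(z,y,x)=(y_n,x_n,y_m)$, then with $(z,y,x)=(x_n,x_m,y_m)$ — yields
$$p(y_n,y_m)\ \le\ p(y_n,x_n)-p(x_n,x_n)+p(x_n,x_m)-p(x_m,x_m)+p(x_m,y_m),$$
and the right-hand side tends to $q-q+q-q+q=q$ along the net $(n,m)$ by Cauchyness of $(x_n)_n$ and the definition of $\sim$ (the quantities are all eventually finite, so the subtractions are harmless); hence $\limsup_{(n,m)}p(y_n,y_m)\le q$. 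On the other hand $p(y_n,y_m)\ge p(y_n,y_n)\vee p(y_m,y_m)\to q$, so $\liminf_{(n,m)}p(y_n,y_m)\ge q$. Therefore $(p(y_n,y_m))_{(n,m)}$ converges to $q$, in particular it is a Cauchy net, i.e.\ $(y_n)_n$ is Cauchy; the converse implication in item 5 follows by symmetry of $\sim$. Finally item 6 is a corollary of items 2 and 5: the constant sequence $(x)_n$ is trivially Cauchy, and if $(x_n)_n\to x$ then $(x_n)_n\sim(x)_n$, so item 5 gives that $(x_n)_n$ is Cauchy. The only place demanding care is the bookkeeping with nets in items 4 and 5 — replacing the informal ``$n,m\to\infty$'' by genuine cofinality and subnet arguments over $\mathbb{N}\times\mathbb{N}$, and checking the arithmetic with $\infty$ stays valid in the finitely typed setting — but once that is set up, no further subtlety arises.
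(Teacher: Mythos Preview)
Your proposal is correct and follows essentially the same approach as the paper: items 1, 2, 3, 4, 6 are handled identically (triviality, appeal to Proposition \ref{36}, transitivity of $\sim$, diagonal subnet, and reduction to the constant sequence via item 5, respectively), and your argument for item 5 is the same double application of the partial-metric triangle inequality followed by a squeeze between $p(y_n,y_n)\vee p(y_m,y_m)$ and the five-term upper bound. The only cosmetic differences are your reordering (1, 4, 2, 5, 3, 6) and your explicit use of $\liminf$/$\limsup$ where the paper simply invokes the squeeze directly.
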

\begin{proof}
(1) Is trivial.
(2) This is a reformulation of Proposition \ref{36}. 
(3) Follows from the previous assertion and transitivity of $\sim$.
(4) If the net $(p(x_n,x_m))_{(n,m)}$ converges in $[0,\infty]$, then the subnet $(p(x_n,x_n))_n$ converges to the same value. 
(5) Assume that $(x_n)_n$ and $(y_n)_n$ have type $q$ and that $(y_n)_n$ is a Cauchy sequence. Then, for all natural numbers $n$ and $m$,
$$p(x_n,x_n)\leq p(x_n,x_m)\leq p(x_n,y_n)-p(y_n,y_n)+p(y_n,y_m)-p(y_m,y_m)+p(y_m,x_m);$$
and since both extremes of this double inequality converge to $q$ when $(n,m)$ goes to $(\infty,\infty)$, so does $p(x_n,x_m)$.
(6) If $(x_n)_n\to x$, then $(x_n)_n\sim (x)_n$; since $(x)_n$ is Cauchy, $(x_n)_n$ is so too.
\end{proof}

To convince the categorically inclined that Definition \ref{37} makes perfect sense, we want to show that there is an essentially bijective correspondence between Cauchy sequences in a finitely typed partial metric space $(X,p)$ on the one hand, and Cauchy distributors on the $\D(R)\nz$-category $\bbX$ (still defined by $\bbX_0=X$, $tx=p(x,x)$ and $\bbX(y,x)=p(y,x)$, of course). Recall that a $\D(R)\nz$-distributor $\phi:\1_q\dist\bbX$ is (in terms of the partial metric) defined by a number $q\in[0,\infty[$ together with a function $\phi\:X\to[0,\infty]$ such that 
\begin{equation}\label{39.1}
q\vee p(y,y)\leq\phi(y)\leq p(y,x)-p(x,x)+\phi(x)
\end{equation}
for all $x,y\in X$. Similarly, a $\D(R)\nz$-distributor $\psi\:\bbX\dist \1_q$ is a number $q\in[0,\infty[$ together with a function $\psi\:X\to[0,\infty]$ such that 
\begin{equation}\label{39.2}
q\vee p(y,y)\leq\psi(y)\leq\psi(x)-p(x,x)+p(x,y)
\end{equation}
for all $x,y\in X$. Such distributors form an adjoint pair $\phi\dashv\psi$ (and so $\phi$ is a Cauchy presheaf, and then we rather write $\phi^*=\psi$) if and only if $\1_q\leq\psi\otimes\phi$ and $\phi\otimes\psi\le \bbX$ in $\Dist(\D(R)\nz)$, that is, for all $x,y\in X$,
\begin{equation}\label{39.3}
\bigwedge_{z\in X}\psi(z)-p(z,z)+\phi(z)\le q \ \mbox{ and } \ p(y,x)\leq \phi(y)-q+\psi(x).
\end{equation}
Fixing $x\in X$, the representable presheaves $\bbX(-,x)\:\1_{tx}\dist\bbX$ and $\bbX(x,-)\:\bbX\dist\1_{tx}$ always form an adjoint pair; they correspond to the functions $p(-,x)\:X\to[0,\infty]$ and $p(x,-)\:X\to[0,\infty]$, with $q=p(x,x)$.
%\begin{remark}\label{40.-2}
%We hasten to point out that, with the same notations as above, there is -- for any partial metric space $(X,p)$ -- a unique distributor from $\1_{\infty}$ to $X$, and a unique distributor from $X$ to $\1_{\infty}$, namely $\phi\:\1_{\infty}\dist X$ and $\psi\:X\dist\1_{\infty}$ determined by 
%$$\phi(x)=\infty=\psi(x);$$
%moreover, $\phi\dashv\psi$ always holds. This adjunction is represented by any point $x\in X$ of type $tx=\infty$ (provided $X$ has such a point).
%\end{remark}

\begin{lemma}\label{40.-1}
If $(x_n)_n$ and $(y_n)_n$ are Cauchy sequences in a finitely typed generalised partial metric space $(X,p)$ then $(p(x_n,y_m))_{n,m}$ is a Cauchy net in $[0,\infty]$.
\end{lemma}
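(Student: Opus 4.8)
The plan is to leverage the triangle inequality of the partial metric to sandwich the net $(p(x_n,y_m))_{n,m}$ between quantities that are controlled by the (known) Cauchyness of $(x_n)_n$ and $(y_n)_n$ separately, together with the ``link'' term $p(x_n,y_n)$. First I would record the basic modified triangular inequality {\tt (PC2)} in the form $p(a,b)+p(b,c)-p(b,b)\geq p(a,c)$, which holds for all $a,b,c\in X$ by Example~\ref{16}. Since $(X,p)$ is finitely typed, Proposition~\ref{38}(4) tells us that both sequences are typed; write $q_x=\lim_{n}p(x_n,x_n)$ and $q_y=\lim_n p(y_n,y_n)$, both finite, and note that by Definition~\ref{37} the double limits $\lim_{m,n}p(x_n,x_m)$ and $\lim_{m,n}p(y_n,y_m)$ exist in $[0,\infty[$.

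The key estimate: for indices $n,m,n',m'$, apply the triangle inequality several times to bound $p(x_n,y_m)$ in terms of $p(x_{n'},y_{m'})$. Concretely, one gets
$$p(x_n,y_m)\leq p(x_n,x_{n'})-p(x_{n'},x_{n'})+p(x_{n'},y_{m'})-p(y_{m'},y_{m'})+p(y_{m'},y_m),$$
by inserting first $x_{n'}$ and then $y_{m'}$ as intermediate points. Symmetrically, swapping the roles of the primed and unprimed indices,
$$p(x_{n'},y_{m'})\leq p(x_{n'},x_n)-p(x_n,x_n)+p(x_n,y_m)-p(y_m,y_m)+p(y_m,y_{m'}).$$
Now, given $\varepsilon>0$: Cauchyness of $(x_n)_n$ gives $N_x$ so that $|p(x_n,x_{n'})-q_x|<\varepsilon$ and $|p(x_n,x_n)-q_x|<\varepsilon$ for $n,n'\geq N_x$ (using that the double limit exists and that the diagonal subnet converges to it, as noted in the text after Definition~\ref{37}); similarly $N_y$ for $(y_n)_n$. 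For $n,n',m,m'\geq N:=N_x\vee N_y$, the two displayed inequalities together force $|p(x_n,y_m)-p(x_{n'},y_{m'})|$ to be bounded by a fixed multiple of $\varepsilon$ (roughly $6\varepsilon$, from the six ``slack'' terms of the form $p(x_n,x_{n'})-p(x_{n'},x_{n'})$ etc., each of absolute value $<2\varepsilon$). Hence $(p(x_n,y_m))_{n,m}$ is a Cauchy net. One should also check it does not escape to $\infty$: since $p(x_n,y_m)\leq p(x_n,x_N)-p(x_N,x_N)+p(x_N,y_N)-p(y_N,y_N)+p(y_N,y_m)$ and every term on the right is finite for $n,m$ large, the net is eventually finite, so it is a genuine Cauchy net in $[0,\infty[$.

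The main obstacle I anticipate is purely bookkeeping: {\tt (PC2)} involves the subtracted self-distance term $-p(b,b)$ at the intermediate point, so each application of the triangle inequality introduces a correction, and one must be careful that these corrections are exactly the quantities $p(x_{n'},x_{n'})-q_x$ (resp.\ for $y$) that Cauchyness controls, rather than something uncontrolled. A second small point is the handling of $+\infty$: one cannot freely rearrange inequalities with $\infty$ appearing, so it is cleanest to first establish eventual finiteness of all the relevant quantities (which follows from finite-typedness plus Definition~\ref{37}) and only then do the $\varepsilon$-arithmetic in $[0,\infty[$. Neither of these is a genuine difficulty; the argument is essentially the classical proof that $d(x_n,y_m)$ is Cauchy when $(x_n),(y_n)$ are, adjusted for the self-distance terms.
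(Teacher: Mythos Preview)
Your proposal is correct and follows essentially the same route as the paper: apply the partial-metric triangular inequality twice (inserting $x_{n'}$ and $y_{m'}$) to get $p(x_n,y_m)\leq p(x_{n'},y_{m'})+\text{(Cauchy-controlled terms)}$, then swap roles. The paper's bookkeeping is slightly leaner---it uses directly that $p(x_n,x_{n'})-p(x_{n'},x_{n'})\leq\varepsilon$ for $n,n'$ large (which is immediate from Cauchyness of the net $(p(x_n,x_m))_{n,m}$), rather than passing through the limits $q_x,q_y$; this yields the sharper bound $|p(x_n,y_m)-p(x_{n'},y_{m'})|\leq 2\varepsilon$ instead of your $6\varepsilon$, but of course the constant is irrelevant.

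One small caution on your finiteness remark: you claim ``every term on the right is finite'', but $p(x_N,y_N)$ need not be finite in a generalised partial metric space (only self-distances are assumed finite). This does not harm the lemma, though, since the statement asks only for a Cauchy net in $[0,\infty]$: your two displayed inequalities already give $p(x_n,y_m)\leq p(x_{n'},y_{m'})+C\varepsilon$ and conversely, so either the net is eventually constant $\infty$ (still Cauchy in $[0,\infty]$) or eventually finite and then Cauchy in the usual sense.
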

\begin{proof}
Let $\varepsilon>0$. Since $(x_n)_n$ and $(y_n)_n$ are Cauchy sequences in $(X,p)$, there is a natural number $N$ so that for all $n,m,n',m'\ge N$,
$$p(x_n,x_{n'})-p(x_{n'},x_{n'})\le\varepsilon, \quad p(y_{m'},y_m)-p(y_{m'},y_{m'})\le\varepsilon,$$
$$p(x_{n'},x_n)-p(x_n,x_n) \le\varepsilon, \quad p(y_m,y_{m'})-p(y_m,y_m)\le\varepsilon.$$
From these inequalities (and the triangular inequality for $p$) we obtain
\begin{eqnarray*}
p(x_n,y_m)-p(x_{n'},y_{m'})
& \leq & (p(x_n,x_{n'})-p(x_{n'},x_{n'})+p(x_{n'},y_m))-p(x_{n'},y_{m'}) \\
& \leq & \varepsilon+p(x_{n'},y_m)-p(x_{n'},y_{m'}) \\
& \leq & \varepsilon+(p(x_{n'},y_{m'})-p(y_{m'},y_{m'})+p(y_{m'},y_m))-p(x_{n'},y_{m'}) \\
& \leq & \varepsilon+p(x_{n'},y_{m'})+\varepsilon-p(x_{n'},y_{m'}) \\
& \leq & 2\varepsilon;
\end{eqnarray*}
similarly (and simultanuously) $p(x_{n'},y_{m'})-p(x_n,y_m)\leq 2\varepsilon$ too. This tells us that $|p(x_n,y_m)-p(x_{n'},y_{m'})|\le 2\varepsilon$, for all $n,m,n',m'\ge N$, which establishes Cauchyness of the net.
\end{proof}
In particular, if $(x_n)_n$ is a Cauchy sequence in a finitely typed generalised partial metric space $(X,p)$ then, for every $y\in X$, both $(p(y,x_n))_n$ and $(p(x_n,y))_n$ are Cauchy sequences in $[0,\infty]$, and therefore converge. This guarantees the existence of the limits in the statement of the next theorem.
\begin{theorem}\label{39}
Let $(X,p)$ be a finitely typed generalised partial metric space, and $\bbX$ the corresponding $\D(R)\nz$-category (with $\bbX_0=X$, $tx=p(x,x)$ and $\bbX(y,x)=p(x,x)$, as always). If $(x_n)_n$ is a Cauchy sequence in $(X,p)$, and we put $q=\lim_{n,m\to\infty}p(x_n,x_m)$, then
$$\phi:\1_q\dist\bbX\mbox{ with elements }\phi(y)=\lim_{n\to\infty}p(y,x_n)$$
is a Cauchy presheaf of finite type, whose right adjoint is
$$\psi:\bbX\dist\1_q\mbox{ with elements }\psi(y)=\lim_{n\to\infty}p(x_n,y).$$
This correspondence is bijective between equivalence classes of Cauchy sequences on the one hand, and Cauchy distributors of finite type on the other. Moreover, a Cauchy sequence converges (to $x\in X$) if and only if the corresponding Cauchy distributor is representable (by $x\in X$). 
\end{theorem}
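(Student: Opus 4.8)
The plan is to prove the three assertions of Theorem~\ref{39} in turn, using throughout that $\D(R)\nz$ is integral with linearly ordered hom-sets, so that the distributor conditions \eqref{39.1}--\eqref{39.3} are literally the displayed inequalities between values of $p$, and that by Lemma~\ref{40.-1} and the remarks after it every net $(p(y,x_n))_n$ and $(p(x_n,y))_n$ converges in $[0,\infty[$ once $(x_n)_n$ is Cauchy; I will also use that for a Cauchy net in $[0,\infty]$ the double limit agrees with the iterated limits. The only non-formal nuisance is the bookkeeping of the value $+\infty$ when passing to the limit in expressions containing a subtraction, but in each case the relevant inequality is trivially true on the side where an infinity occurs, so I will not dwell on it. \emph{First assertion:} given a Cauchy sequence $(x_n)_n$ of type $q=\lim_{n,m}p(x_n,x_m)$ (finite, since $(X,p)$ is finitely typed and a Cauchy net in $[0,\infty]$ cannot be eventually $\infty$), I would check \eqref{39.1} for $\phi(y)=\lim_np(y,x_n)$ and \eqref{39.2} for $\psi(y)=\lim_np(x_n,y)$ by letting $n\to\infty$ in $p(y,x_n)\geq p(x_n,x_n)\vee p(y,y)$ and in the triangular inequality, using $\lim_np(x_n,x_n)=q$ (Proposition~\ref{38}(4)). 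The adjunction $\phi\dashv\psi$ is the two inequalities of \eqref{39.3}: $p(y,x)\leq\phi(y)-q+\psi(x)$ is the limit of $p(y,x)\leq p(y,x_n)-p(x_n,x_n)+p(x_n,x)$, and $\bigwedge_{z}\psi(z)-p(z,z)+\phi(z)\leq q$ follows by evaluating the left-hand side at $z=x_n$ and letting $n\to\infty$, since $\lim_n\phi(x_n)=\lim_n\psi(x_n)=\lim_np(x_n,x_n)=q$. Thus $\phi$ is a Cauchy presheaf of finite type $q$ with $\phi^*=\psi$.

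\emph{Bijectivity, first half.} The assignment factors through $\sim$-classes: if $(x_n)_n\sim(y_n)_n$ they share the type $q$, and taking the limit of $p(z,x_n)\leq p(z,y_n)-p(y_n,y_n)+p(y_n,x_n)$ (and its mirror image) gives $\lim_np(z,x_n)=\lim_np(z,y_n)$ for every $z$, hence the same $\phi$ (and dually the same $\psi$). For surjectivity, start from a Cauchy presheaf $\phi$ of finite type $q$ with right adjoint $\psi$ and, using the counit inequality, choose $x_n\in X$ with $\psi(x_n)-p(x_n,x_n)+\phi(x_n)<q+\tfrac1n$; from $\phi(x_n),\psi(x_n)\geq q\vee p(x_n,x_n)$ this forces $\phi(x_n),\psi(x_n),p(x_n,x_n)\to q$, then $p(x_n,x_m)\leq\phi(x_n)-q+\psi(x_m)$ together with $p(x_n,x_m)\geq p(x_n,x_n)$ shows $(x_n)_n$ is Cauchy of type $q$, and finally the unit inequality $\phi(y)\leq p(y,x_n)-p(x_n,x_n)+\phi(x_n)$ and the counit inequality $p(y,x_n)\leq\phi(y)-q+\psi(x_n)$, in the limit, pin down $\lim_np(y,x_n)=\phi(y)$, so $(x_n)_n$ is sent back to $\phi$.

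\emph{Bijectivity, injectivity.} Suppose $(x_n)_n$ and $(y_n)_n$ both induce $\phi$, hence the same $q$ and $\psi$; I would use an interleaving argument. By Lemma~\ref{40.-1} the net $(p(x_n,y_m))_{n,m}$ converges, and it lies between $\lim_n p(x_n,x_n)=q$ and $\lim_n\phi(x_n)-q+\lim_m\psi(y_m)=q$ (using $p(x_n,y_m)\geq p(x_n,x_n)$ and $p(x_n,y_m)\leq\phi(x_n)-q+\psi(y_m)$, and $\lim_n\phi(x_n)=\lim_m\psi(y_m)=q$), so its limit is $q$; symmetrically $\lim_{n,m}p(y_m,x_n)=q$. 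Restricting both nets to the diagonal yields $\lim_np(x_n,y_n)=q=\lim_np(y_n,x_n)$, which together with the equality of types is exactly the definition of $(x_n)_n\sim(y_n)_n$. This is the step I expect to require the most care: it is precisely here that Lemma~\ref{40.-1}, the adjunction inequalities, and the iterated-equals-double-limit property of Cauchy nets have to be combined just right.

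\emph{Convergence versus representability.} By Proposition~\ref{36}, $(x_n)_n\to x$ holds iff the three limits $\lim_np(x,x_n)$, $\lim_np(x_n,x_n)$, $\lim_np(x_n,x)$ all equal $p(x,x)$, i.e.\ iff $\phi(x)=q=\psi(x)=p(x,x)$. If that is so, then \eqref{39.1} gives $\phi(y)\leq p(y,x)-p(x,x)+\phi(x)=p(y,x)$ and \eqref{39.3} gives $p(y,x)\leq\phi(y)-q+\psi(x)=\phi(y)$, so $\phi(y)=\bbX(y,x)$ for all $y$, i.e.\ $\phi$ is representable by $x$. Conversely, if $\phi=\bbX(-,x)$ then the type of $\phi$ is $tx=p(x,x)$, so $q=p(x,x)$, and its right adjoint is the cograph $\psi=\bbX(x,-)$; hence $\phi(x)=p(x,x)=q=\psi(x)$, so the three limits above all equal $p(x,x)$ and $(x_n)_n\to x$ by Proposition~\ref{36}. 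Everything here is formal once Proposition~\ref{36} is invoked; the genuine work is the injectivity clause above, together with the $+\infty$-bookkeeping mentioned at the outset.
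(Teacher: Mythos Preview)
Your proof is correct and follows essentially the same strategy as the paper's: limits of the partial-metric inequalities give \eqref{39.1}--\eqref{39.3}, the surjectivity step picks $x_n$ with $\psi(x_n)-p(x_n,x_n)+\phi(x_n)<q+\tfrac1n$, and well-definedness on $\sim$-classes is the same triangular-inequality-in-the-limit computation. The only noticeable variation is in the injectivity step: the paper bounds $p(x_n,y_n)$ directly via $p(x_n,x_n)\le p(x_n,y_n)\le\phi(x_n)-q+\psi(y_n)$ (no appeal to Lemma~\ref{40.-1} needed there), whereas you first pass through the mixed net $(p(x_n,y_m))_{n,m}$ and then restrict to the diagonal---both work, but the paper's route is marginally shorter.
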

\begin{proof}
First we verify \eqref{39.1} to make sure that $\phi\:\1_q\dist\bbX$ is a well-defined presheaf on $\bbX$. Because $p$ is a partial metric we certainly have $p(y,y)\vee p(x_n,x_n)\leq p(y,x_n)\leq p(y,x)-p(x,x)+p(x,x_n)$ for all $n$. Letting $n$ go to $\infty$, we therefore find that $p(y,y)\vee q\leq\phi(y)\leq p(y,x)-p(x,x)+\phi(x)$, as required. A similar reasoning holds to verify \eqref{39.2} for $\psi$. 

To show that $\phi\dashv\psi$, we have to verify \eqref{39.3}; applied to the case at hand, this means that
$$\bigwedge_{z\in X}\lim_{n\to\infty}p(x_n,z)-p(z,z)+\lim_{n\to\infty}p(z,x_n)\le q$$
$$\mbox{ and }\quad p(y,x)\leq \lim_{n\to\infty}p(y,x_n)-q+\lim_{n\to\infty}p(x_n,x)$$
for all $y,z\in Y$. To see the first inequality, let $\varepsilon>0$. Since $(x_n)_n$ is a Cauchy sequence of type $q$ in $(X,p)$, there is a natural number $N$ so that for any $n\geq N$
$$p(x_n,x_N)\le q+\varepsilon,\quad p(x_N,x_n)\le q+\varepsilon\quad\mbox{ and }\quad q-\varepsilon\leq p(x_N,x_N).$$
Therefore
$$p(x_n,x_N)-p(x_N,x_N)+p(x_N,x_n)\le q+3\varepsilon,$$
and the assertion follows by choosing $\varepsilon$ arbitrarily small and letting $n$ go to $\infty$. To show the second inequality, for $\varepsilon>0$ let $N$ be a natural number so that, for all $n,m\ge N$,
$$p(x_n,x_m)-p(x_n,x_n)\le\varepsilon\quad\mbox{ and }\quad q-\varepsilon\le p(x_m,x_m)$$
for all $n,m\ge N$. It then follows that
\begin{eqnarray*}
p(y,x)
& \leq & p(y,x_n)-p(x_n,x_n)+p(x_n,x_m)-p(x_m,x_m)+p(x_m,y) \\
& \leq & p(y,x_n)+\varepsilon-p(x_m,x_m)+p(x_m,y) \\
& \leq & p(y,x_n)+\varepsilon-(q-\varepsilon)+p(x_m,y) \\
& \leq & p(y,x_n)-q+p(x_m,y)+2\varepsilon.
\end{eqnarray*}
Choosing $\varepsilon$ arbitrary small and letting $n$ and $m$ go to $\infty$, this proves the point.

If $(y_n)_n$ is a Cauchy sequence with $(y_n)_n\sim(x_n)_n$, then in particular $\lim_{n\to\infty}p(y_n,y_n)=\lim_{n\to\infty}p(y_n,x_n)$. For any $z\in X$ we know that
$$p(z,x_n)\le p(z,y_n)-p(y_n,y_n)+p(y_n,x_n),$$
and therefore $\lim_{n\to\infty}p(z,x_n)\le\lim_{n\to\infty}p(z,y_n)$. A similar argument shows the reverse inequality, which proves that both sequences define the same Cauchy distributor. Conversely, if two Cauchy sequences $(x_n)_n$ and $(y_n)_n$ induce the same Cauchy distributor $\phi:\1_q\dist\bbX$, with right adjoint $\psi:\bbX\dist\1_q$, then they are of the same type $q$. Moreover, for every $\varepsilon>0$, there exist some natural number $N$ so that, for all $n\ge N$
$$q\le\phi(x_n)=\lim_{m\to\infty}p(x_n,x_m)\le q+\varepsilon 
\quad\mbox{ and }\quad
q\le\psi(y_n)=\lim_{m\to\infty}p(y_m,y_n)\le q+\varepsilon.$$
Hence, $p(x_n,x_n)\le p(x_n,y_n)\le \phi(x_n)-q+\psi(y_n)\le q+2\varepsilon$, for all $n\ge N$, which proves $(x_n)_n\sim(y_n)_n$.

Let now $\phi\dashv\psi$ with $\phi:\1_q\dist\bbX$ and $\psi:\bbX\dist\1_q$ (for some $q\in[0,\infty[$). Thanks to the first inequation in \eqref{39.3} we can pick, for every natural number $n$, an element $x_n\in X$ so that
$$\phi(x_n)-p(x_n,x_n)+\psi(x_n)\le q+\frac{1}{n}.$$
But \eqref{39.1} and \eqref{39.2} say that $p(x_n,x_n)\vee q\le\phi(x_n)\wedge\psi(x_n)$, so we find\begin{eqnarray*}
&& p(x_n,x_n)\leq q+\frac{1}{n}\quad\mbox{ and }\quad q\leq p(x_n,x_n)+\frac{1}{n},\\
&& p(x_n,x_n) \le \phi(x_n)\le p(x_n,x_n)+\frac{1}{n},\\
&& \mbox{and }\quad p(x_n,x_n)\le \psi(x_n)\le p(x_n,x_n)+\frac{1}{n},
\end{eqnarray*}
which implies that 
\begin{equation}\label{40.0}
q=\lim_{n\to\infty}p(x_n,x_n)=\lim_{n\to\infty}\phi(x_n)=\lim_{n\to\infty}\psi(x_n).
\end{equation}
By the second inequation in \eqref{39.3} we know that
$$p(x_n,x_n)\leq p(x_n,x_m)\le\phi(x_n)-q+\psi(x_m)$$
for all $n$ and $m$, so with \eqref{40.0} we obtain $\lim_{n,m\to\infty}p(x_n,x_m)=q$, and we proved $(x_n)_n$ to be a Cauchy sequence in $(X,p)$. Finally, this Cauchy sequence in turn determines the Cauchy presheaf it was constructed from: because from \eqref{39.1} and \eqref{39.2} we get
$$\phi(x)\leq p(x,x_n)-p(x_n,x_n)+\phi(x_n)\quad\mbox{ and }\quad\psi(x)\leq\psi(x_n)-p(x_n,x_n)+p(x_n,x)$$
for all $x\in X$ and all natural numbers $n$, and with \eqref{40.0} we find that
$$\phi\le\lim_{n\to\infty}p(-,x_n)\quad\mbox{ and }\quad\psi\le\lim_{n\to\infty}p(x_n,-)$$
too; and these inequalities are equalities because $\phi\dashv\psi$ and (as attested by the first part of this proof) $\lim_{n\to\infty}p(-,x_n)\dashv\lim_{n\to\infty}p(x_n,-)$.
\end{proof}
Combining the above Theorem \ref{39} with the remarks in Subsection \ref{gradd}, we arrive at the following conclusions.
\begin{corollary}\label{40}
A generalised partial metric space $(X,p)$ is categorically Cauchy complete (meaning that every Cauchy distributor on $(X,p)$ qua $\D(R)$-enriched category is representable) if and only if the finitely typed part of $(X,p)$ is sequentially Cauchy complete (meaning that every Cauchy sequence in $(X,p)$ converges) and $(X,p)$ has at least one point of type $\infty$.
\end{corollary}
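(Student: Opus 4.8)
The plan is to derive the corollary from Theorem~\ref{39} together with the analysis of zero objects in Subsection~\ref{gradd}, organising the argument by the type of a Cauchy presheaf. As usual I regard $(X,p)$ as the $\D(R)$-category $\bbX$ with $\bbX_0=X$, $tx=p(x,x)$ and $\bbX(y,x)=p(y,x)$; then $\bbX$ is categorically Cauchy complete exactly when every Cauchy presheaf $\phi\:\1_q\dist\bbX$ is representable. Since $\D(R)$ has a single zero object --- namely $\infty$ --- and every Cauchy presheaf on $\bbX$ has a well-defined type $q\in[0,\infty]$, the first step is to break the condition ``every Cauchy presheaf is representable'' into the subcase of presheaves of type $\infty$ and the subcase of presheaves of finite type.

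For the type-$\infty$ subcase I would use the observation recorded just before Proposition~\ref{100}: on any $\D(R)$-category there is precisely one Cauchy presheaf of type $\infty$, with its component at each object forced by uniqueness of the relevant morphism. This presheaf is representable if and only if $\bbX$ has an object of type $\infty$; indeed any such object $c$ has $\bbX(-,c)$ a presheaf of type $\infty$, hence equal to that unique one. In terms of the partial metric this says exactly that $(X,p)$ has at least one point of self-distance $\infty$.

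For the finite-type subcase I would appeal to Proposition~\ref{100} and the commuting square preceding it. Writing $\bbX\nz$ for the finitely typed part of $(X,p)$ viewed as a $\D(R)\nz$-category, the Cauchy presheaves on $\bbX$ of finite type are exactly the $\D(R)\nz$-enriched Cauchy presheaves on $\bbX\nz$: the components of such a presheaf at the points of self-distance $\infty$ are forced, and those points contribute only zero morphisms to the composites occurring in the adjunction conditions $\1_q\le\phi^*\tensor\phi$ and $\phi\tensor\phi^*\le\bbX$, so they are invisible; moreover a finite-type presheaf can be represented only by a finite-type object, so representability over $\bbX$ agrees with representability over $\bbX\nz$. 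Hence ``every finite-type Cauchy presheaf on $\bbX$ is representable'' says exactly that $\bbX\nz$ is Cauchy complete qua $\D(R)\nz$-category. Now Theorem~\ref{39}, applied to the finitely typed generalised partial metric space carried by $\bbX\nz$, rephrases this as ``every Cauchy sequence in the finitely typed part of $(X,p)$ converges'', since under the bijection of that theorem the representable Cauchy distributors correspond to the convergent Cauchy sequences (and a Cauchy sequence, being typed of finite type, lies eventually in the finitely typed part and, by Corollary~\ref{35}, can only converge there). Conjoining the two subcases gives the corollary.

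I do not expect a genuine difficulty: all the analysis has already been done inside Theorem~\ref{39}, and the only point needing care is the bookkeeping in the finite-type subcase --- the identification of finite-type Cauchy presheaves on $\bbX$ with Cauchy presheaves on $\bbX\nz$, and the matching of representability over the two categories --- which is essentially Proposition~\ref{100} plus the trivial observation that a presheaf and an object representing it must share a type. The one thing to keep straight throughout is which objects carry self-distance $\infty$ and how they are, or are not, seen by presheaves of finite versus infinite type.
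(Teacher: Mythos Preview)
Your proposal is correct and takes essentially the same approach as the paper, which gives no explicit proof but simply says the corollary follows by ``combining the above Theorem~\ref{39} with the remarks in Subsection~\ref{gradd}''. You have accurately unpacked what that combination means: split Cauchy presheaves by type, handle the unique type-$\infty$ presheaf via the observation before Proposition~\ref{100}, and handle the finite-type presheaves via Proposition~\ref{100} together with Theorem~\ref{39}.
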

Especially Proposition \ref{100} helps us with:
\begin{example}\label{40.1}
The Cauchy completion of a generalised partial metric space $(X,p)$, viewed as a $\D(R)$-category $\bbX$, is the sum in $\Cat(\D(R))$ of the Cauchy completion {\it qua $\D(R)\nz$-enriched category} of $\bbX\nz$ plus a singleton of type $\infty$:
$$\bbX\cc=(\bbX\nz)\cc+\1_{\infty}.$$
But $\bbX\nz$ is exactly the finitely typed part of $(X,p)$, and we know by Theorem \ref{39} that the finitely typed Cauchy presheaves on the finitely typed part of $(X,p)$ are in one-to-one correpondence with equivalence classes of Cauchy sequences. Therefore, the Cauchy completion of $(X,p)$ has as elements the equivalence classes of Cauchy sequences in the finitely typed part of $(X,p)$, plus an extra point which we shall denote by $\infty$, and comes with the partial metric defined by
$$p(\infty,[(x_n)_n])=p([(x_n)_n],\infty)=p(\infty,\infty)=\infty$$
and
$$p([(x_n)_n],[(y_n)_n)])\stackrel{(1)}{=}\bigwedge_{z\in X}\lim_{n\to\infty}p(x_n,z)-p(z,z)+\lim_{n\to\infty}p(z,y_n)\stackrel{(2)}{=}\lim_{n\to\infty}p(x_n,y_n).$$
Indeed, the first equality in the line above is exactly the formula for the hom-arrow in $\bbX\cc$ between the corresponding Cauchy distributors; the second equality can be proven as follows. Thanks to Lemma \ref{40.-1} we know that $(p(x_n,y_m)_{(n,m)})$ is a Cauchy net in $[0,\infty]$, so it converges, and therefore so does the subnet $(p(x_n,y_n)_n)$; so we may put $q=\lim_{n\to\infty}p(x_n,y_n)$. Since we always have
$$p(x_n,y_n)\leq p(x_n,z)-p(z,z)+p(z,y_n)$$
we can let $n$ go to $\infty$, and then take the infimum over $z$, to see that the ``$\geq$'' in the second equality always holds. For the ``$\leq$'', let $\varepsilon>0$. Since both $(p(x_n,x_m))_{(n,n)}$ and $(p(x_n,y_m))_{(n,m)}$ are Cauchy nets in $[0,\infty]$ (as, again, attested by Lemma \ref{40.-1}), there is some natural number $N$ so that, for all $n\ge N$,
$$p(x_n,n_N)-p(x_N,x_N)\le\varepsilon\quad\mbox{ and }\quad p(x_N,y_n)\le q+\varepsilon.$$
Therefore
$$\lim_{n\to\infty}p(x_n,x_N)-p(x_N,x_N)+\lim_{n\to\infty}p(x_N,y_n)\le q+2\varepsilon,$$
and the assertion follows.
\end{example}
The above results for partial metric spaces of course apply to metric spaces too---and {\it almost} produce the ``usual'' results. Indeed, a Cauchy sequence in a (generalised) metric space $(X,d)$ in the sense of Definition \ref{37} is exactly a Cauchy sequence in the usual sense; and it converges in $(X,d)$ qua partial metric if and only if it does so in $(X,d)$ qua metric. Put differently, a Cauchy distributor on $(X,d)$ qua $R$-category is neither more nor less than a Cauchy distributor on $(X,d)$ qua $\D(R)$-category of type $0$ (because the type of a Cauchy presheaf $\phi=\lim_{n\to\infty}d(-,x_n)$ on $\bbX=(X,d)$ is necessarily $\lim_{n\to\infty}d(x_n,x_n)=0$); and it is representable qua $R$-enriched distributor if and only if it is qua $\D(R)$-enriched distributor. However, the Cauchy completion of $(X,d)$ qua metric space does not create that ``extra point at infinity'', which the Cauchy completion of $(X,d)$ qua {\it partial} metric space always does!

\subsection{Hausdorff distance, exponentiability}

In \cite{stu10} we developed a general theory of `Hausdorff distance' for quantaloid-enriched categories; applied to the quantaoid $\D(R)$ this produces the following results for partial metrics.
\begin{example}\label{50}
The {\bf Hausdorff space} $\H(X,p)=(\H X,p_{\H})$ of a generalised partial metric space $(X,p)$ is the new generalised partial metric space with elements 
$$\H X=\{S\subseteq X\mid \forall x,x'\in S\:p(x,x)=p(x',x')\}$$ 
(i.e.\ the {\bf typed subsets} of $X$) and partial distance
\begin{equation}\label{50.1}
p_{\H}(T,S)=\bigvee_{t\in T}\bigwedge_{s\in S}p(t,s).
\end{equation}
The inclusion $(X,p)\to\H(X,p)\:x\mapsto\{x\}$ is the unit for the so-called {\bf Hausdorff doctrine} $\H\:\Gen\Par\Met\to\Gen\Par\Met$, and as such enjoys a universal property: it is the universal conical cocompletion (see \cite[Section 5]{stu10}).

The naive extension of the formula in \eqref{50.1} to {\em arbitrary} subsets of $(X,p)$ fails to produce a partial metric, for the following reason. Suppose $a$ and $b$ are elements of a partial metric space $(X,p)$, with $p(a,a)<p(b,b)$. Then $\{a,b\}$ is {\em not} a typed subset of $X$, but if we nevertheless use the sup-inf formula we find in particular that
$$p_\H(\{a\},\{a,b\})=p(a,a),\ p_{\H}(\{a,b\},\{b\})=p(a,b),$$
$$p_\H(\{a\},\{b\})=p(a,b),\ p_\H(\{a,b\},\{a,b\})=p(b,b).$$
In particular is $p_\H(\{a\},\{a,b\})-p_\H(\{a,b\},\{a,b\})+p_\H(\{a,b\},\{b\})\not\geq p_\H(\{a\},\{b\})$, so that $p_\H$ fails to be a partial metric. 
\end{example}
We gave a general characterisation of exponentiable quantaloid-enriched categories and functors in \cite{clehofstu09}; this specialises to the case of partial metric spaces as follows.
\begin{example}\label{51}
A generalised partial metric space $(X,p)$ is {\bf exponentiable} in the (cartesian) category $\Gen\Par\Met$ if and only if 
\begin{equation}\label{51.1}
\begin{array}{l}
\mbox{for all $x_0,x_2\in X$ and $u,v,w\in[0,\infty]$} \\
\mbox{such that $p(x_0,x_0)\vee v\leq u$ and $p(x_2,x_2)\vee v\leq w$:}\\
\bigwedge\Big\{(u\vee p(x_0,x_1))-v+(w\vee p(x_1,x_2))\mid x_1\in X,p(x_1,x_1)=v\Big\}=(u-v+w)\vee p(x_0,x_2).
\end{array}
\end{equation}
This literal application of the very general Theorem 1.1 of \cite{clehofstu09} (but see also Section 5 of that paper) to the specific quantaloid $\D(R)$ can be simplified somewhat. First, using the triangular inequality for the partial metric, it is straightforward to verify that the ``$\geq$'' in \eqref{51.1} always holds. Second, the ``$\leq$'' is trivially satisfied whenever either of $p(x_0,x_2)$, $u$ or $w$ is $\infty$ (because the right hand side is then $\infty$); because $p(x_0,x_0)\vee p(x_2,x_2)\leq p(x_0,x_2)$ we may also exclude the cases where either $p(x_0,x_0)$ or $p(x_2,x_2)$ is $\infty$; and because $v\leq u\wedge w$ (in the hypotheses) we may exclude the case $v=\infty$. The above condition thus becomes:
\begin{equation}\label{51.2}
\begin{array}{l}
\mbox{for all $x_0,x_2\in X$ and $u,v,w\in[0,\infty[$} \\ 
\mbox{such that $p(x_0,x_2)<\infty$, $p(x_0,x_0)\vee v\leq u$ and $p(x_2,x_2)\vee v\leq w$:}\\
\bigwedge\Big\{(u\vee p(x_0,x_1))-v+(w\vee p(x_1,x_2))\mid x_1\in X,p(x_1,x_1)=v\Big\}\leq(u-v+w)\vee p(x_0,x_2).
\end{array}
\end{equation}
It actually suffices to check {\em this} condition only when $p(x_0,x_2)\le u-v+w$. Indeed, whenever $u-v+w<p(x_0,x_2)$ we may apply this (hypothetically valid) condition on $u'-v+w=p(x_0,x_2)$ for the appropriate $u'\geq u$ in the first inequality below, to find that
\begin{eqnarray*}
(u-v+w)\vee p(x_0,x_2)
& = & (u'-v+w)\vee p(x_0,x_2) \\
& \geq & \bigwedge\{(u'\vee p(x_0,x_1))-v+(w\vee p(x_1,x_2))\mid x_1\in X,p(x_1,x_1)=v\}\\
& \geq & \bigwedge\{(u\vee p(x_0,x_1))-v+(w\vee p(x_1,x_2))\mid x_1\in X,p(x_1,x_1)=v\}
\end{eqnarray*}
anyway.
But for $p(x_0,x_2)\le u-v+w$, the inequality in \eqref{51.2} is further equivalent to
$$\bigwedge\{(u\vee p(x_0,x_1))+(w\vee p(x_1,x_2))\mid x_1\in X,p(x_1,x_1)=v\}\leq u+w$$
since $v\le u+w <\infty$ and $\{x_1\in X\mid p(x_1,x_1)=v\}$ cannot be empty. Therefore we finally find that a generalised partial metric space $(X,p)$ is exponentiable in $\Gen\Par\Met$ if and only if
\begin{equation}\label{51.4}
\begin{array}{l}
\mbox{for all $x_0,x_2\in X$, $u,v,w\in[0,\infty[$ and $\varepsilon>0$} \\
\mbox{such that $p(x_0,x_2)\le u-v+w$, $p(x_0,x_0)\vee v\leq u$ and $p(x_2,x_2)\vee v\leq w$} \\
\mbox{there exists $x_1\in X$ such that $p(x_1,x_1)=v$, $p(x_0,x_1)\le u+\varepsilon$ and $p(x_1,x_2)\le w+\varepsilon$.}
\end{array}
\end{equation}
This immediately implies that an exponentiable partial metric space is either empty, or has all distances equal to $\infty$, or has for every $r\in[0,\infty[$ at least one element with self-distance $r$. In particular a generalised metric space $(X,d)$ exponentiable in $\Gen\Par\Met$ if and only if it is empty (even though a non-empty $(X,d)$ may still be exponentiable in $\Gen\Met$!).

Furthermore, with the same proof as in \cite[Theorem 5.3 and Corollary 5.4]{hofrei13}, we obtain that every injective partial metric space (in particular, every partial metric obtained from the presheaf construction in $\Gen\Par\Met=\Cat(\D(R))$, see Subsection \ref{X}) is exponentiable; moreover, the full subcategory of $\Gen\Par\Met$ defined by all injective partial metric spaces is Cartesian closed.
\end{example}

\end{document}